\theoremstyle{plain}
\newtheorem{theorem}{Theorem}
\newtheorem{corollary}[theorem]{Corollary}
\newtheorem{lemma}[theorem]{Lemma}
\theoremstyle{definition}
\newtheorem*{definition}{Definition}
\theoremstyle{remark}
\newcommand{\A}{\mathcal A}
\newcommand{\As}{\mathcal A_s}
\newcommand{\Ap}{\mathcal A_+}
\newcommand{\App}{\mathcal A_+^{-1}}
\newcommand{\Am}{\mathcal B}
\newcommand{\B}{\mathcal B}
\newcommand{\Bpp}{\mathcal B_+^{-1}}
\newcommand{\Ams}{\mathcal {B}_s}
\newcommand{\Ampp}{\mathcal B_+^{-1}}
\newcommand{\DtA}{\mathcal D_\tau(\A)}
\newcommand{\DtAi}{\mathcal D_\tau^{-1}(\A)} 
\newcommand{\DtAmi}{\mathcal D_{\omega}^{-1}(\mathcal B)}
\newcommand{\cDa}{D_\alpha^c}
\newcommand{\minDa}{D_\alpha^{min}}
\newcommand{\maxDa}{D_\alpha^{max}}
\newcommand{\moDa}{D_\alpha^{mo}}
\newcommand{\Daz}{D_{\alpha,z}}
\newcommand{\cQa}{Q_\alpha^c}
\newcommand{\minQa}{Q_\alpha^{min}}
\newcommand{\maxQa}{Q_\alpha^{max}}
\newcommand{\moQa}{Q_\alpha^{mo}}
\newcommand{\Qaz}{Q_{\alpha,z}}
\newcommand{\Qazv}{Q_{\alpha,z'}}
\mathchardef\hy="2D
\newcommand{\R}{\mathbb R}
\newcommand{\ler}[1]{\left( #1 \right)}
\newcommand{\fel}{{1/2}}
\newcommand{\mfel}{{-1/2}}
\begin{document}

\title[]{Quantum R\'enyi relative entropies on density spaces of $C^*$-algebras:\\their symmetries and their essential difference}

\author{Lajos Moln\'ar}
\address{University of Szeged, Interdisciplinary Excellence Centre, Bolyai Institute,
H-6720 Szeged, Aradi v\'ertan\'uk tere 1.,
Hungary, and
Budapest University of Technology and Economics,  Institute of Mathematics,
H-1521 Budapest, Hungary}
\email{molnarl@math.u-szeged.hu}
\urladdr{http://www.math.u-szeged.hu/\~{}molnarl}

\dedicatory{}

\thanks{The research was begun while the author was visiting the University of Lille and was supported by the Labex CEMPI (ANR-11-LABX-0007-01). The author is very grateful to his host Mostafa Mbekhta for the kind hospitality. 
\break
Ministry of Human Capacities, Hungary grant 20391-3/2018/FEKUSTRAT is also acknowledged and the work was supported by the National Research, Development and Innovation Office of Hungary, NKFIH, Grant No. K115383, too.}

\begin{abstract}
We extend the definitions of different types of quantum R\'enyi relative entropy from the finite dimensional setting of density matrices to density spaces of $C^*$-algebras. We show that those quantities (which trivially coincide in the classical commutative case) are essentially different on non-commutative algebras in the sense that none of them can be transformed to another one by any surjective transformation between density spaces. Besides, we determine the symmetry groups of density spaces corresponding to each of those quantum R\'enyi relative entropies and find that they are identical. Similar results concerning the Umegaki and the Belavkin-Staszewksi relative entropies are also presented.
\end{abstract}

\subjclass[2010]{Primary: 46L40, 47B49, 81P45}
\keywords{Quantum R\'enyi relative entropy, Umegaki relative entropy, Belavkin-Staszewksi relative entropy, $C^*$-algebra, positive definite cone, density space, symmetries, Jordan *-isomorphisms.}
\maketitle

\section{Introduction and statements of the results}

Relative entropies play a very important role in classical information theory. For the purposes of measuring information content, they are used to measure how well a probability distribution approximates another one.
For any two given probability distributions $p, q$ on a finite set $\mathcal X$, their R\'enyi $\alpha$-divergence ($\alpha \in ]0,\infty[, \alpha\neq 1$) is the quantity
\[
D_\alpha(p||q) = \frac{1}
{\alpha-1} \log \sum_{x\in \mathcal X} p(x)^{\alpha}q(x)^{1-\alpha}.
\]
Among all relative entropies, the parametric family $D_\alpha(.||.)$ has a distinguished role for several reasons. 
Indeed, its elements have various desirable mathematical properties:
they are non-increasing under stochastic maps, jointly convex for
$\alpha\in ]0,1[$, jointly quasi-convex for $\alpha \in ]1,\infty[$, monotone increasing as a function of the parameter $\alpha$, and
the Kullback-Leibler divergence  (i.e., the most classical relative entropy) is their limiting case as $\alpha\to 1$. In addition to that, R\'enyi relative entropies have a great operational significance as quantifiers of the trade-off between relevant quantities in many coding problems.

Quantum information theory is a very rapidly developing area of science. In view of the above classical facts, it is not a surprise that there is a quest for finding appropriate analogues of R\'enyi relative entropy in the quantum setting. Indeed, currently this is a quite hot topic in quantum information theory, a really extensive area of research. So extensive that we do not dare to select a few papers as 'the' references, instead, we only mention the recent book \cite{Toma} of Tomamichel and otherwise refer the reader to arXiv and MathSciNet where one can easily find many related materials. 

Now, a few words about the problem of finding the appropriate quantum R\'enyi relative entropy. Due to the non-commutativity of the structure of density matrices ($n\times n$ complex positive semidefinite matrices with unit trace), there are in fact many different possible ways to define  quantum R\'enyi relative entropy which would extend the classical one. The basic problem here is which one to select, which one of them is the most useful? Parallel to that is the question of how do the different non-commutative extensions relate to each other?
As for the former problem, it does not look that there would be a definite answer. Indeed, concerning all of the so far studied extensions it has turned out that some of the required nice properties are satisfied only to certain extents, or they are fully satisfied but others are not. The picture is very complicated, and the fact is that we have a variety of notions of quantum R\'enyi type relative entropy and some of them are useful for certain reasons, some of them are so for other reasons.  

The aim of this work is twofold. On the one hand, we determine the symmetries of the density spaces of quantum systems with respect to the currently considered and studied concepts of quantum R\'enyi relative entropies. We do this in the very general setting of $C^*$-algebras that has recently been introduced by Farenick etal \cite{Far16, Far17}. This question is motivated by Wigner's famous result on quantum mechanical symmetry transformations which describes the transformations on the set of pure states (rank-one densities) that preserve the quantity of transition probability. We will see that the symmetries in question, i.e., the transformations preserving the different quantum R\'enyi relative entropies, are closely related to the Jordan *-isomorphisms between the underlying algebras. These latter transformations are the most fundamental sorts of isomorphisms between $C^*$-algebras from the quantum mechanical point of view, they are just the natural isomorphisms of quantum observables (see below). On the other hand, we show that those concepts of quantum R\'enyi relative entropy not only formally but also essentially differ from each other in the sense that if one of them can be transformed to another one by a surjective transformation between the density spaces, then the underlying $C^*$-algebras are necessarily commutative (in which case those quantities trivially coincide) meaning that the systems behind are necessarily classical, non-quantum.

We fix the notation and present the basic definitions.
First, we point out that we follow the approach which, in the finite dimensional Hilbert space framework of quantum information, considers (mixed) states as density matrices, i.e., positive semidefinite matrices with unit trace. The corresponding abstract setup was introduced in the papers \cite{Far16, Far17} by Farenick etal, what we follow below. 
Namely, let $\A$ be a (unital) $C^*$-algebra. We denote by $\Ap$ the set of all positive elements of $\A$ and by $\App$ the set of all positive invertible elements of $\A$ what we call the positive definite cone of $\A$. By a trace on $\A$ we mean a positive linear functional $\tau$ on $\A$ which satisfies $\tau(AB)=\tau(BA)$ for all $A,B\in \A$. The trace $\tau$ is said to be faithful if $\tau (A)=0$, $A\in \Ap$, implies $A=0$. Fundamental examples for $C^*$-algebras having faithful traces include UHF-algebras, finite factors, irrational rotation algebras.
For such a faithful trace $\tau$ on a $C^*$-algebra $\A$, we define the $\tau$-density space of $\A$ as the set
\begin{equation*}
\DtA=\{ a\in \Ap\, :\, \tau(a)=1\}. 
\end{equation*}
In fact, in order to define the key concepts of the paper, we denote by $\DtAi$ the set of all invertible elements of the $\tau$-density space of $\A$, i.e.,
\begin{equation*}
\DtAi=\DtA\cap \App.
\end{equation*} 
For any parameter $\alpha\in ]0,\infty[, \alpha\neq 1$, we introduce the different types of quantum R\'enyi relative entropy as follows. Actually, to each of those types names of certain researchers are attributed who defined and investigated the corresponding concepts in the context of finite quantum systems, i.e., for density matrices. We begin with the conventional (or, in another terminology, standard) R\'enyi relative entropy considered by Petz what we define here as
\begin{equation}\label{E:D1}
\tau\hy\cDa(A||B)=\frac{1}{\alpha -1}\log \tau(A^\alpha B^{1-\alpha}), \quad A,B\in \DtAi,
\end{equation}
see \cite{Toma}, p. 67, and the original source \cite{Petz86}. 
Next, the minimal (or sandwiched) R\'enyi relative entropy is the quantity
\begin{equation}\label{E:D2}
\tau\hy\minDa(A||B)=\frac{1}{\alpha -1}\log \tau \ler{B^{\frac{1-\alpha}{2\alpha}}AB^{\frac{1-\alpha}{2\alpha}}}^{\alpha}, \quad A,B\in \DtAi
\end{equation}
which is originally due to M\"uller-Lennert, Dupuis,
Szehr, Fehr and Tomamichel. See \cite{Toma}, p. 58, and also \cite{MDSFT}, \cite{WWY}.
The former two sorts of quantum R\'enyi relative entropy are particular cases of the so-called $\alpha-z$-R\'enyi relative entropies 
which were introduced by Audenaert and Datta in \cite{AD}. In the present setting we define
\begin{equation}\label{E:D3}
\tau\hy\Daz(A||B)=\frac{1}{\alpha -1}\log \tau \ler{B^{\frac{1-\alpha}{2z}}A^{\frac{\alpha}{z}}B^{\frac{1-\alpha}{2z}}}^{z}, \quad A,B\in \DtAi.
\end{equation}
Here $z>0$ is any positive real number. Clearly, if $z=1$, then we get the conventional R\'enyi relative entropy, while in the case where $z=\alpha$, we obtain the minimal R\'enyi relative entropy.
After this follows the maximal R\'enyi relative entropy which is defined by
\begin{equation}\label{E:D4}
\tau\hy\maxDa(A||B)=\frac{1}{\alpha -1}\log \tau \ler{B^{\fel}(B^\mfel AB^\mfel)^\alpha B^\fel}, \quad A,B\in \DtAi
\end{equation}
and was essentially introduced by Petz and Ruskai in \cite{PetzRuskai} (in fact, in the place of the power function with exponent $\alpha$ in \eqref{E:D4}, they considered general operator convex functions). In \cite{Mat}, Matsumoto verified a certain maximality property of that quantity, this is why we call it maximal R\'enyi relative entropy (also see paragraph 4.2.3 in \cite{Toma}). Finally,
in \cite{MosOga}, Mosonyi and Ogawa introduced and studied another type of quantum R\'enyi relative entropy which, in our present setting, is defined as
\begin{equation}\label{E:D5}
\tau\hy\moDa(A||B)=\frac{1}{\alpha -1}\log \tau \ler{\exp({\alpha\log A+(1-\alpha)\log B})}, \quad A,B\in \DtAi.
\end{equation}
These are the main concepts in the paper. Observe that
in the case of commutative algebras all those quantities coincide and we will see that the converse is also true. In fact, below we will prove the much stronger statement what we have already mentioned in the abstract as well as in the first part of the introduction, which shows that the relative entropies above are essentially different.    

Before formulating our results, we remark that the above defined quantities \eqref{E:D1}-\eqref{E:D5} can trivially be extended from $\DtAi$ to the whole positive definite cone $\App$ of $\A$. In what follows we will consider maps which are kinds of invariance transformations under pairs of such numerical quantities. Clearly, the invariance property does not change if we multiply those quantities by the common scalar $(\alpha -1)$ and then omit the function $\log$ which appears in each of the above formulas. Therefore, in order to simplify our considerations a bit, for any given numbers $\alpha\in ]0,\infty[, \alpha\neq 1$ and $z>0$, we define the following numerical quantities:
\begin{gather}
\tau\hy\cQa(A||B)=\tau(A^\alpha B^{1-\alpha}), \label{E:Q1}\\ 
\tau\hy\minQa(A||B)=\tau \ler{B^{\frac{1-\alpha}{2\alpha}}AB^{\frac{1-\alpha}{2\alpha}}}^{\alpha}, \label{E:Q2}\\
\tau\hy\Qaz(A||B)=\tau \ler{B^{\frac{1-\alpha}{2z}}A^{\frac{\alpha}{z}}B^{\frac{1-\alpha}{2z}}}^{z}, \label{E:Q3}\\
\tau\hy\maxQa(A||B)=\tau \ler{B^{\fel}(B^\mfel AB^\mfel)^\alpha B^\fel}, \label{E:Q4}\\
\tau\hy\moQa(A||B)=\tau \ler{\exp({\alpha\log A+(1-\alpha)\log B})} \label{E:Q5}
\end{gather}
for any $A,B\in \App$.

As mentioned above, we will essentially need the concept of Jordan *-isomorphisms between $C^*$-algebras $\A, \Am$. The map $J:\A \to \Am$ is called a Jordan *-isomorphism if it is a bijective linear transformation which has the properties that $J(XY+YX)=J(Y)J(X)+J(X)J(Y)$ and $J(X^*)=J(X)^*$ hold for any $X,Y\in \A$. Those maps are of fundamental importance for several reasons.
For example, they are the basic isomorphisms (symmetries) in the algebraic approach to quantum theory initiated by Segal, see \cite{Seg}. 

In what follows we will see that the studied transformations turn to be of similar forms. In order to simplify the formulations of our results we introduce the following concept.

\begin{definition}
Let $\A,\B$ be $C^*$-algebras with faithful traces $\tau,\omega$, respectively.
We say that a map $\phi$ between the positive definite cones $\App,\Bpp$ or between the density spaces $\DtAi, \DtAmi$ is of {\it the standard form} if there are a central element $C\in \Bpp$ and a Jordan *-isomorphism $J:\A\to \B$ such that
the identity
$\phi(.)=CJ(.)$
holds on the domain of $\phi$ and, moreover, we have $\omega(CJ(X))=\tau(X)$, $X\in \A$.
\end{definition}

In our first main result which follows we describe the structure of all surjective maps between the positive definite cones of $C^*$-algebras with faithful traces which preserve any of the quantum R\'enyi relative entropy related quantities \eqref{E:Q1}-\eqref{E:Q5}. The maps under consideration are kinds of symmetries between those cones.
Our result says that all those maps are of the standard form, they all originate from Jordan *-isomorphisms between the underlying algebras multiplied by central positive invertible elements. It might be worth mentioning the somewhat surprising fact that we do not assume but get it for free that those preservers are automatically linear and even multiplicative to some extent.

\begin{theorem}\label{T:Cone}
Let $\A, \B$ be $C^*$-algebras with faithful traces $\tau,\omega$, respectively, and let $\alpha, z$ be positive numbers, $\alpha\neq 1$. Let $\phi:\App\to \Ampp$ be a surjective map. Then $\phi$ satisfies
\begin{equation}\label{E:1}
\omega\hy\Qaz(\phi(A)||\phi(B))=\tau\hy\Qaz(A||B), \quad A,B\in \App
\end{equation} 
if and only if it of the standard form.

Analogous assertions are valid for all other quantities in \eqref{E:Q1}-\eqref{E:Q5}, for every positive number $\alpha$ different from 1 with the exception of the quantity in \eqref{E:Q4}, where we need to assume that $\alpha \leq 2$.
\end{theorem}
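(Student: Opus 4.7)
The easy (if) direction is a direct computation. Given $\phi(X)=CJ(X)$ with $C\in\Bpp$ central and $J:\A\to\B$ a Jordan *-isomorphism satisfying $\omega(CJ(X))=\tau(X)$, I would use three standard facts: (i) $J$ preserves the continuous functional calculus of positive elements, so $J(X^t)=J(X)^t$ for any $t\in\R$ (in particular $J(\log X)=\log J(X)$); (ii) $J(XYX)=J(X)J(Y)J(X)$ for all $X,Y$, which is a classical identity for Jordan homomorphisms; (iii) $C$ is central, so every power of $C$ commutes with every $J(X)$ and can be collected into a single factor inside the trace. Applying these to \eqref{E:Q1}--\eqref{E:Q5} yields the invariance, e.g.\ for \eqref{E:Q3} the argument of $\omega$ becomes $C^z\cdot J\ler{B^{(1-\alpha)/2z}A^{\alpha/z}B^{(1-\alpha)/2z}}^z$, which by the trace condition equals $\tau$ of the corresponding element in $\A$. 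For \eqref{E:Q5}, linearity of $J$ on logarithms together with (i) and (iii) gives the result.

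The substantive direction is (only if). My strategy has four steps.

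\emph{Step 1: Recover trace and homogeneity.} Setting $A=B$ in \eqref{E:1} gives $\omega(\phi(A))=\tau(A)$, so $\phi$ is trace-preserving. Scaling $A\mapsto \lambda A$ and $B\mapsto\mu B$ with $\lambda,\mu>0$ shows that each $Q$-quantity has an explicit homogeneity law in $(\lambda,\mu)$; comparing with \eqref{E:1} for generic $(\lambda,\mu)$ forces $\phi(\lambda A)=\lambda\phi(A)$. Thus $\phi$ is positive-homogeneous and restricts to a bijection between the density spaces $\DtAi$ and $\DtAmi$.

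\emph{Step 2: Extract a rigid invariant.} Here lies the core difficulty, and the form of the argument depends on which of \eqref{E:Q1}--\eqref{E:Q5} is being considered. For \eqref{E:Q1} I would fix $B$ and vary $A$, exploiting that the functional $A\mapsto \tau(A^\alpha B^{1-\alpha})$, extended from $\DtAi$ to all of $\App$, is strictly concave/convex (depending on $\alpha$) and tracks the spectral data of $A$ through $B$. Substituting a parametric family such as $B=B_0+tX$ and taking derivatives in $t$ yields a family of preserved trace bilinear forms of the shape $\tau(A^\alpha f(B_0)X)$, which separate enough points of $\App$ to recover the operator structure. For \eqref{E:Q2}, \eqref{E:Q3} I would use an analogous derivative-in-$B$ argument together with the Araki--Lieb--Thirring inequality, which for $\alpha\leq 2$ in \eqref{E:Q4} becomes the key reason for the parameter restriction (the operator convexity of $t\mapsto t^\alpha$ fails beyond that range and the variational extremals used below lose their rigidity). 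For \eqref{E:Q5}, differentiating at $\alpha\to 0$ or $\alpha\to 1$ recovers the Umegaki/BS-type functionals and the same analysis applies.

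\emph{Step 3: Upgrade to a Jordan-type isomorphism.} From the preserved bilinear data of Step 2, I would derive that $\phi$ (divided by its value on the identity, so that the ``multiplier'' $C$ is peeled off) preserves the spectrum, the order, and ultimately the Jordan triple product $XYX$ on $\App$. From here a result in the spirit of Molnár's earlier work (and of Kadison's classical Jordan-isomorphism theorem for positive cones) produces a Jordan *-isomorphism $J:\A\to\B$ with $\phi(\cdot)=CJ(\cdot)$ for a suitable $C\in\Bpp$. Centrality of $C$ then follows by comparing $\phi$ evaluated on ``rotations'' $UAU^*$ (with $U$ unitary in $\A$) versus $A$, using the preserved invariants to force $C$ to commute with the range of $J$, hence with all of $\B$. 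The trace identity $\omega(CJ(X))=\tau(X)$ is immediate from Step 1.

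\emph{Main obstacle.} Step 2 is where the real work is concentrated, and it must be carried out separately for each of the five quantities. The uniformity of the conclusion masks a case-by-case analysis: the five integrands have genuinely different analytic behavior in $B$, and the parameter restriction $\alpha\leq 2$ in \eqref{E:Q4} is exactly the range in which the underlying variational/convexity argument survives. In each case the delicate point is producing, out of the scalar identity \eqref{E:1}, enough linear data (trace bilinear forms) to pin down $\phi$ on $\App$.
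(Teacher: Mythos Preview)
Your Step~1 is correct and matches the paper. The real issue is Step~2, and here your plan has a genuine gap: you propose to differentiate along curves $B=B_0+tX$ to extract trace bilinear forms, but $\phi$ is only assumed to be a surjective \emph{set map}. No continuity, let alone differentiability, is given, so you cannot control $\phi(B_0+tX)$ as $t$ varies. The derivative of the right-hand side of \eqref{E:1} is computable, but the left-hand side involves $\phi(B(t))$, about which you know nothing analytically. This breaks the entire mechanism for producing the ``preserved bilinear data'' on which your Step~3 rests. (Your remark on \eqref{E:Q5}, ``differentiating at $\alpha\to 0$ or $\alpha\to 1$'', also misfires: $\alpha$ is a fixed parameter in the hypothesis, not a variable you may send to a limit.)

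The paper's route avoids this completely by replacing analysis with order. The key lemmas (Lemmas~\ref{L:1}, \ref{L:7}, \ref{L:4}) characterize an order relation purely in terms of inequalities between $Q$-values: e.g.\ $A^{\alpha/z}\le B^{\alpha/z}$ if and only if $\tau\hy\Qaz(A\|X)\le\tau\hy\Qaz(B\|X)$ for \emph{all} $X\in\App$. Because $\phi$ is surjective and preserves the $Q$-values exactly, this equivalence transfers verbatim, so $\phi$ is a positive homogeneous bijection satisfying $f(A)\le f(B)\Leftrightarrow g(\phi(A))\le g(\phi(B))$ for appropriate $f,g$ (powers or $\log$). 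No regularity of $\phi$ is ever used. The structural work is then done by Theorem~\ref{T:pot}, which classifies such maps via the Thompson metric and a generalized Mazur--Ulam argument, yielding $\phi(A)=f^{-1}(CJ(f(A))C)$; trace preservation and Lemmas~\ref{L:2}--\ref{L:5} (GNS plus St{\o}rmer's decomposition of Jordan homomorphisms) force $C$ central, giving the standard form. Your proposed centrality argument via unitary conjugation $UAU^*$ is not what is used and would again require knowing how $\phi$ interacts with such conjugations, which you do not. Finally, the restriction $\alpha\le 2$ in \eqref{E:Q4} arises not from Araki--Lieb--Thirring but from Lemma~\ref{L:BS}: the order characterization there needs $t\mapsto t^{\alpha-1}$ (or $t\mapsto -t^{-\alpha}$) to be operator monotone, which holds precisely for $\alpha\in\,]0,2]$.
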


As a corollary, we easily obtain the following description of the structure of maps between density spaces of $C^*$-algebras preserving the different types of quantum R\'enyi relative entropy.

\begin{corollary}\label{C:main}
Let $\A, \B$ be $C^*$-algebras with faithful traces $\tau,\omega$, respectively, and let $\alpha, z$ be positive real numbers, $\alpha\neq 1$. Assume that $\phi:\DtAi\to \DtAmi$ is a surjective map. Then $\phi$ satisfies
\begin{equation*}\label{E:3}
\omega\hy\Daz(\phi(A)||\phi(B))=\tau\hy\Daz(A||B), \quad A,B\in \DtAi
\end{equation*} 
if and only if $\phi$ is of the standard form. 

Analogous assertions are valid for all other quantities in \eqref{E:D1}-\eqref{E:D5}, for every positive number $\alpha$ different from 1 with the exception of \eqref{E:D4}, where we need to assume that $\alpha \leq 2$.
\end{corollary}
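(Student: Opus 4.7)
The plan is to deduce Corollary~\ref{C:main} from Theorem~\ref{T:Cone} by using the scaling homogeneity of the quantities \eqref{E:Q1}--\eqref{E:Q5} to extend $\phi$ from $\DtAi$ to the full positive definite cone $\App$, and then invoking the theorem on the cone.

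First, I would define $\tilde\phi:\App\to\Bpp$ by
\[
\tilde\phi(X)=\tau(X)\,\phi(X/\tau(X)),\qquad X\in\App.
\]
Since $\phi(X/\tau(X))\in\DtAmi$, one has $\omega(\tilde\phi(X))=\tau(X)$, so $\tilde\phi$ actually takes values in $\Bpp$. Surjectivity is automatic: given $Y\in\Bpp$ with $s=\omega(Y)$, surjectivity of $\phi$ supplies $A\in\DtAi$ with $\phi(A)=Y/s$, whence $\tilde\phi(sA)=Y$. Moreover $\tilde\phi$ agrees with $\phi$ on $\DtAi$, since $\tau(X)=1$ there.

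The heart of the reduction is the joint scaling identity
\[
Q(\lambda A||\mu B)=\lambda^\alpha\mu^{1-\alpha}\,Q(A||B),\qquad \lambda,\mu>0,\ A,B\in\App,
\]
which holds for each of $\cQa$, $\minQa$, $\Qaz$, $\maxQa$, $\moQa$. For the first four it is immediate from the continuous functional calculus; for $\moQa$ one uses $\log(\lambda A)=(\log\lambda)\mathbf{1}+\log A$ together with the centrality of the scalar term. After rewriting the hypothesis $\omega\hy\Daz(\phi(A)||\phi(B))=\tau\hy\Daz(A||B)$ as $\omega\hy\Qaz(\phi(A)||\phi(B))=\tau\hy\Qaz(A||B)$ on $\DtAi$ (valid since $\alpha\neq 1$ and $\log$ is a bijection of the positive reals), three applications of the homogeneity give
\begin{align*}
\omega\hy\Qaz(\tilde\phi(A)||\tilde\phi(B))
 &= \tau(A)^\alpha \tau(B)^{1-\alpha}\,\omega\hy\Qaz(\phi(A/\tau(A))||\phi(B/\tau(B))) \\
 &= \tau(A)^\alpha \tau(B)^{1-\alpha}\,\tau\hy\Qaz(A/\tau(A)||B/\tau(B)) \\
 &= \tau\hy\Qaz(A||B),
\end{align*}
for all $A,B\in\App$, and analogously for the other four quantities.

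By Theorem~\ref{T:Cone} there are a Jordan *-isomorphism $J:\A\to\B$ and a central $C\in\Bpp$ with $\omega(CJ(X))=\tau(X)$ for every $X\in\A$ such that $\tilde\phi(X)=CJ(X)$. Restricting to $\DtAi$, where $\tilde\phi=\phi$, gives $\phi$ in the standard form, which is the nontrivial direction of the corollary. The converse is routine: the trace identity $\omega(CJ(X))=\tau(X)$, centrality of $C$, and the fact that Jordan *-isomorphisms intertwine the continuous functional calculus on positive invertible elements together force each $Q$-quantity, and hence each $D$-quantity, to be preserved. No serious obstacle arises here, as the substantive content resides entirely in Theorem~\ref{T:Cone}; the only delicate point is the bookkeeping of the scaling exponents, most notably the homogeneity of $\moQa$ in the general $C^*$-algebraic setting.
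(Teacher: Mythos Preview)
Your proposal is correct and follows essentially the same approach as the paper: extend $\phi$ to the cone via $\tilde\phi(X)=\tau(X)\,\phi(X/\tau(X))$, use the joint homogeneity of the $Q$-quantities to verify that $\tilde\phi$ satisfies the hypothesis of Theorem~\ref{T:Cone}, and then restrict back. You have filled in details (the explicit scaling law, the surjectivity check) that the paper leaves to the reader, but the strategy is identical.
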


In the second main result which follows we show that the above defined quantum R\'enyi relative entropies are essentially different in the following sense: a density space equipped with one such relative entropy can be transformed by any map onto another density space equipped with a different such quantum relative entropy only in the trivial case of commutative algebras, i.e., only in the case of classical, non-quantum systems. The precise statement reads as follows.

\begin{theorem}\label{T:commut}
Let $\A, \B$ be $C^*$-algebras with faithful traces $\tau,\omega$, respectively, and let $\phi:\DtAi\to \DtAmi$ be a surjective map. Assume that $\phi$ satisfies
\begin{equation*}\label{E:13}
\omega\hy\moDa((\phi(A)||\phi(B))=\tau\hy\Daz(A||B), \quad A,B\in \DtAi.
\end{equation*} 
Then the algebras $\A, \B$ are necessarily commutative in which case all  considered types of quantum R\'enyi relative entropy coincide and hence Corollary \ref{C:main} applies and provides the form of $\phi$, in which the corresponding Jordan *-isomorphism is of course necessarily an
 algebra *-isomorphism.

Analogous assertions hold for all other pairs of different quantum R\'enyi relative entropies listed in \eqref{E:D1}-\eqref{E:D5} and for every positive number $\alpha$ different from 1 with the only restriction that concerning the quantity in \eqref{E:D4} we need to assume that $\alpha\leq 2$. 
\end{theorem}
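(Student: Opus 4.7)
My plan is to reduce Theorem \ref{T:commut} to Theorem \ref{T:Cone} together with a classical comparison inequality between the two R\'enyi quantities involved. First I would extend $\phi$ from $\DtAi$ to the positive definite cone $\App$ by the standard scaling $\tilde\phi(A):=\tau(A)\,\phi(A/\tau(A))$. Since both $\moQa$ and $\Qaz$ are $(\alpha,1-\alpha)$-homogeneous in their two arguments, the hypothesis transports to
\[
\omega\hy\moQa(\tilde\phi(A)\|\tilde\phi(B))=\tau\hy\Qaz(A\|B),\qquad A,B\in\App.
\]

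Next I would establish that $\tilde\phi$ is of the standard form $\tilde\phi(A)=C\,J(A)$ for a central $C\in\Bpp$ and a Jordan *-isomorphism $J:\A\to\B$ with $\omega(CJ(X))=\tau(X)$. The key observation is that whenever $A,B\in\App$ commute, both quantities $\moQa$ and $\Qaz$ collapse to the classical expression $\tau(A^\alpha B^{1-\alpha})$, so on every abelian subcone of $\App$ the displayed identity is the preservation of a single R\'enyi quantity. This allows me to rerun the machinery behind Theorem \ref{T:Cone}, in which the central scalar and the Jordan *-isomorphism are reconstructed from preservation data on a sufficiently rich family of commuting configurations, and conclude the standard form.

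With $\tilde\phi=CJ(\cdot)$ in hand, a short computation using the Jordan identity (which forces $J(X^n)=J(X)^n$ and hence $J(f(X))=f(J(X))$ for every continuous $f$ by Stone--Weierstrass), the centrality of $C$, and the trace-intertwining $\omega(CJ(\cdot))=\tau(\cdot)$, yields
\[
\omega\hy\moQa(\tilde\phi(A)\|\tilde\phi(B))=\tau\hy\moQa(A\|B),\qquad A,B\in\App.
\]
Comparing with the hypothesis forces $\tau\hy\moQa(A\|B)=\tau\hy\Qaz(A\|B)$ for all $A,B\in\App$. At this point I would invoke the Golden--Thompson inequality, which gives $\tau\hy\moQa(A\|B)\le\tau(A^\alpha B^{1-\alpha})$ with equality iff $A$ and $B$ commute, together with the Araki--Lieb--Thirring inequality, which gives a definite-sign comparison between $\tau\hy\Qaz(A\|B)$ and $\tau(A^\alpha B^{1-\alpha})$ (direction depending on whether $z\ge 1$ or $z\le 1$), again with equality iff $A$ and $B$ commute. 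Combining these two inequalities with the preceding identity forces every pair of positive invertibles in $\A$ to commute, so $\A$ is commutative; then $\B$, being the Jordan *-isomorphic image of $\A$, is commutative too. In that case all R\'enyi variants coincide and Corollary \ref{C:main} delivers the form of $\phi$. The other pairs of quantities listed in the theorem are handled by the same template, with the appropriate pairwise operator inequality replacing Golden--Thompson and Araki--Lieb--Thirring; the restriction $\alpha\le 2$ in the $\maxQa$ case enters precisely through the range where the relevant inequality is available and remains strict off the commuting locus.

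The main obstacle is the standard-form step: the hypothesis is not a preservation identity for a single entropic quantity, so Theorem \ref{T:Cone} cannot be quoted verbatim. Overcoming this requires systematically exploiting the coincidence of the two R\'enyi quantities on commuting pairs together with a rigidity argument that propagates this classical data back through the non-commutative algebra.
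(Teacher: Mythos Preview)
Your proposal has a genuine gap at the ``standard-form step'' (your step 2), and you yourself flag it as the main obstacle without resolving it. The difficulty is real: the machinery behind Theorem~\ref{T:Cone} does \emph{not} run on commuting data alone. The proof of Theorem~\ref{T:Cone} passes through the order characterizations in Lemmas~\ref{L:1}, \ref{L:7}, \ref{L:4}, each of which requires the test element $X$ to range over \emph{all} of $\App$, not over an abelian subcone. Knowing that $\omega\hy\moQa(\tilde\phi(A)\|\tilde\phi(B))=\tau\hy\Qaz(A\|B)$ for commuting $A,B$ tells you essentially nothing about how $\tilde\phi$ treats noncommuting pairs, and there is no ``rigidity'' mechanism in the paper that would let you propagate this. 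So the step where you conclude $\tilde\phi=CJ(\cdot)$ is unsupported.

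The paper's argument sidesteps this entirely, and the idea is worth internalizing. After the extension \eqref{E:ext}, one applies the order characterization of Lemma~\ref{L:1} on the $\tau\hy\Qaz$ side and that of Lemma~\ref{L:4} on the $\omega\hy\moQa$ side, \emph{simultaneously}, to the mixed identity. This yields directly that $A^{\alpha/z}\le B^{\alpha/z}$ if and only if $\log\tilde\phi(A)\le\log\tilde\phi(B)$. Now $\tilde\phi$ is a positive homogeneous surjection intertwining a power-order with the log-order, and case~(c4) of Theorem~\ref{T:pot} forces both algebras to be commutative---no standard form is needed beforehand, and no Golden--Thompson/Araki--Lieb--Thirring input is used. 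The other pairs are handled analogously via (c2) or (c4), except the $\maxQa$/$\Qaz$ pair, where for certain coincidences of parameters one does first obtain the standard form (via (c1) and Corollary~\ref{C:C}) and then derives a trace identity on $\App$ that, through the order characterization of Lemma~\ref{L:1} again, forces a power map to be an order automorphism and hence $\A$ to be commutative.

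A secondary issue: even granting your step~2, your step~5 is not airtight. For $z\ge 1$ Araki--Lieb--Thirring gives $\tau\hy\Qaz\le\tau\hy\cQa$, the \emph{same} direction as Golden--Thompson for $\tau\hy\moQa$, so the equality $\tau\hy\moQa=\tau\hy\Qaz$ does not squeeze anything against $\tau\hy\cQa$. You would need a direct comparison between $\tau\hy\moQa$ and $\tau\hy\Qaz$ with a strict equality case, and in the general $C^*$-setting with a faithful trace this is an extra ingredient you have not supplied.
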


We complete the above results with some additional related ones concerning other fundamental concepts of quantum relative entropy. They are the Umegaki relative entropy and the Belavkin-Staszewski relative entropy which, in our present context, are defined as follows. For any $C^*$-algebra $\A$ with faithful trace $\tau$, 
the Umegaki relative entropy on $\DtAi$ is defined by
\begin{equation}\label{E:U}
S_U^{\tau}(A||B)=\tau\ler{A(\log A-\log B)} 
\end{equation}
for all $A,B\in \DtAi$,
while the Belavkin-Staszewski relative entropy is defined by
\begin{equation}\label{E:BS}
S_{BS}^{\tau}(A||B)=\tau\ler{A\log (A^{\fel}B^{-1}A^{\fel})}
\end{equation}
for all $A,B\in \DtAi$.
Their connection to the quantum R\'enyi relative entropies is the following. In the finite dimensional setting, where $\A$ is the algebra of all $n\times n$ complex matrices and $\tau$ is the usual trace, it is well-known that the conventional R\'enyi relative entropy as well as the minimal R\'enyi relative entropy tends to the Umegaki relative entropy as $\alpha \to 1$. In fact, the same is true for the general $\alpha-z$-R\'enyi relative entropy which was proved in the paper \cite{LM}. The limit of the Mosonyi-Ogawa version of quantum R\'enyi relative entropy is again the Umegaki relative entropy as $\alpha \to 1$, see \cite{MosOga}. Finally, the limiting case of the maximal R\'enyi relative entropy is the Belavkin-Staszewski relative entropy, cf. 4.2.3 in \cite{Toma}.

In Theorem 1 in \cite{ML17a} we described the structure of all bijective maps between the positive definite cones of $C^*$-algebras which preserve the Umegaki relative entropy. (In fact, there we considered an even more general numerical quantity, the so-called quasi-entropy that involves a parameter, namely an invertible element of the underlying algebra which is the identity in our present case.) The proof of that result is very much different from the proof of our Theorem \ref{T:Cone} here. One can easily see that the method of the proof of Corollary \ref{C:main} above can be used to derive the following result from Theorem 1 in \cite{ML17a} on maps respecting the Umegaki relative entropy between density spaces of $C^*$-algebras.

\begin{theorem}\label{T:Udens}
Let $\A, \B$ be $C^*$-algebras with faithful traces $\tau,\omega$, respectively, and let $\phi:\DtAi\to \DtAmi$ be a surjective map. Then $\phi$ preserves the Umegaki relative entropy, i.e., it satisfies
\begin{equation*}\label{E:12}
S_U^{\omega}(\phi(A)||\phi(B))=S_U^{\tau}(A||B), \quad A,B\in \DtAi
\end{equation*} 
if and only if $\phi$ is of the standard form.
\end{theorem}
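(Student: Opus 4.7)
The plan is to reduce Theorem~\ref{T:Udens} to its cone-level analogue (Theorem~1 of \cite{ML17a}) by extending $\phi$ radially from the density space to the full positive definite cone, exploiting the simple way in which the Umegaki relative entropy scales under positive multiples of its arguments.

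For the sufficiency direction, suppose $\phi=CJ$ is of the standard form. Centrality of $C$ together with the fact that any Jordan *-isomorphism preserves continuous functional calculus for self-adjoint elements gives $\log\phi(A)=\log C+J(\log A)$, so the entropy difference simplifies to $\omega(CJ(A)\,J(\log A-\log B))$. A brief calculation based on the Jordan identity and the tracial property of $\omega$ (which gives the symmetrization $\omega(CJ(A)J(Y))=\tfrac12\omega(CJ(AY+YA))$ using centrality of $C$) then reduces this to $\omega(CJ(A(\log A-\log B)))=\tau(A(\log A-\log B))$.

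For necessity, I would define an extension $\Phi:\App\to\Bpp$ of $\phi$ by the radial rule
\[
\Phi(A)=\tau(A)\,\phi\!\left(\frac{A}{\tau(A)}\right),\qquad A\in\App.
\]
Surjectivity of $\Phi$ is immediate from that of $\phi$: given $B'\in\Bpp$, set $b'=\omega(B')$, pick $\tilde A\in\DtAi$ with $\phi(\tilde A)=B'/b'$, and then $\Phi(b'\tilde A)=B'$. The heart of the argument is the scaling identity
\[
S_U^{\tau}(aX\,\|\,bY)=a\log\tfrac{a}{b}\,\tau(X)+a\,S_U^{\tau}(X\,\|\,Y),\qquad a,b>0,\ X,Y\in\App,
\]
which follows at once from $\log(aX)=(\log a)\mathbf{1}+\log X$ and linearity of $\tau$. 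Applying it on both sides with $a=\tau(A)$, $b=\tau(B)$, $\tilde A=A/a$, $\tilde B=B/b$ (and using $\omega(\phi(\tilde A))=\omega(\phi(\tilde B))=1$, since $\phi$ takes values in $\DtAmi$) shows that $\Phi$ preserves the Umegaki relative entropy on $\App\times\App$ precisely because $\phi$ does so on $\DtAi\times\DtAi$.

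It then remains to invoke Theorem~1 of \cite{ML17a}, which yields $\Phi(X)=CJ(X)$ for a central $C\in\Bpp$ and a Jordan *-isomorphism $J:\A\to\B$. The trace-preservation clause $\omega(CJ(Y))=\tau(Y)$ for all $Y\in\A$, demanded by the standard form, comes for free: by construction $\omega(\Phi(A))=\tau(A)\,\omega(\phi(A/\tau(A)))=\tau(A)$ for every $A\in\App$, hence $\omega(CJ(A))=\tau(A)$ on the cone, and the identity extends to all of $\A$ by linearity (since $\App$ spans $\A$). Restricting $\Phi=CJ$ to $\DtAi$ recovers $\phi$ in the standard form. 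The only genuinely non-routine point in the whole argument is the scaling identity for $S_U^\tau$; the rest is a clean structural transfer through the radial extension.
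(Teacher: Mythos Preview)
Your argument is correct and is essentially the paper's own first approach: extend radially to the cone via \eqref{E:ext}, verify that the extension still preserves $S_U$ through the scaling identity, and invoke Theorem~1 of \cite{ML17a} (the paper also records a self-contained alternative via Lemma~\ref{L:U}, Theorem~\ref{T:pot}(c3) and Corollary~\ref{C:C}). One small gap to close: Theorem~1 of \cite{ML17a} is stated for \emph{bijective} maps, so you should note that injectivity of $\Phi$ follows from the preservation property (if $\Phi(A)=\Phi(B)$ then $S_U^\tau(X\|A)=S_U^\tau(X\|B)$ for all $X$, forcing $\tau(X\log A)=\tau(X\log B)$ and hence $A=B$); the paper handles this point via Lemma~\ref{L:U}.
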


The structure of maps preserving the Belavkin-Staszewski relative entropy is again the same as we can see in the following theorem. 

\begin{theorem}\label{T:BSdens}
Let $\A, \B$ be $C^*$-algebras with faithful traces $\tau,\omega$, respectively, and $\phi:\DtAi\to \DtAmi$ be a surjective map. Then $\phi$ preserves the Belavkin-Staszewski relative entropy, i.e., it satisfies
\begin{equation}\label{E:33}
S_{BS}^{\omega}(\phi(A)||\phi(B))=S_{BS}^{\tau}(A||B), \quad A,B\in \DtAi
\end{equation} 
if and only if $\phi$ is of the standard form.
\end{theorem}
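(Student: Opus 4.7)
The plan is to reduce the theorem to a preserver result at the level of the whole positive definite cone $\App$, in parallel with how Corollary \ref{C:main} is derived from Theorem \ref{T:Cone} and how Theorem \ref{T:Udens} is derived from the cone result of \cite{ML17a}. First I would extend $\phi$ to a bijection $\tilde\phi:\App\to\Bpp$ by $\tilde\phi(X)=\tau(X)\,\phi(X/\tau(X))$ for $X\in\App$, which restricts to $\phi$ on $\DtAi$ and satisfies $\omega(\tilde\phi(X))=\tau(X)$. Using the scaling identity
\[
S_{BS}^\tau(sA\Vert tB)=s\log(s/t)\,\tau(A)+s\,S_{BS}^\tau(A\Vert B),\qquad s,t>0,
\]
which follows from $\log((s/t)A^{1/2}B^{-1}A^{1/2})=\log(s/t)\,\mathbf{1}+\log(A^{1/2}B^{-1}A^{1/2})$, together with its $\omega$-counterpart, the additive log-corrections cancel in the computation of $S_{BS}^\omega(\tilde\phi(A)\Vert\tilde\phi(B))$ because $\omega(\phi(A'))=\tau(A')=1$ for $A'\in\DtAi$. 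Consequently the hypothesis \eqref{E:33} upgrades to
\[
S_{BS}^\omega(\tilde\phi(A)\Vert\tilde\phi(B))=S_{BS}^\tau(A\Vert B),\qquad A,B\in\App.
\]

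Next I would invoke a cone-level companion theorem asserting that every bijection $\App\to\Bpp$ preserving the Belavkin-Staszewski relative entropy is of the standard form $X\mapsto CJ(X)$ with a central $C\in\Bpp$ and a Jordan *-isomorphism $J:\A\to\B$. Applied to $\tilde\phi$, this yields $\tilde\phi(X)=CJ(X)$ on the entire cone. Restricting to $X\in\DtAi$ gives $\phi(X)=CJ(X)$, while the identity $\omega(\tilde\phi(X))=\tau(X)$ specialises to $\omega(CJ(X))=\tau(X)$ on $\App$ and then extends to all of $\A$ by linearity of $J$ and of the two traces, noting that $\App$ linearly spans $\A$. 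The converse direction — that every standard-form map preserves $S_{BS}$ — is a routine verification using the defining properties of Jordan *-isomorphisms, centrality of $C$, and the trace condition $\omega(CJ(X))=\tau(X)$.

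The main obstacle is establishing the cone version itself. Unlike the $(s,t)$-homogeneous R\'enyi quantities of Theorem \ref{T:Cone}, BS is only homogeneous up to the additive correction $s\log(s/t)\,\tau(A)$, which rules out a direct mimicry of the R\'enyi argument. A plausible line of attack is as follows. First, setting $B=\lambda A$ yields $S_{BS}^\tau(A\Vert\lambda A)=-\log(\lambda)\,\tau(A)$; as $\lambda$ varies, comparing this with its image under $\tilde\phi$ should force $\tilde\phi$ to be positively homogeneous and pin down $\omega(\tilde\phi(A))$ as a positive multiple of $\tau(A)$. Second, by probing the preservation equation with $A$ ranging over small scalar multiples of invertible approximations of projections and using the expansion
\[
S_{BS}^\tau(\varepsilon P\Vert B)=\varepsilon\log\varepsilon\,\tau(P)+\varepsilon\,\tau\bigl(P\log(P^{1/2}B^{-1}P^{1/2})\bigr),
\]
one should recover spectral data for operators of the form $P^{1/2}B^{-1}P^{1/2}$ under $\tilde\phi$, and thereby that $\tilde\phi$ respects the operator order. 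Third, one applies order-theoretic rigidity in the spirit of the proof of Theorem 1 in \cite{ML17a} for Umegaki to conclude that $\tilde\phi$ decomposes as a central positive invertible factor composed with a Jordan *-isomorphism. The delicate point is the second step, where the non-homogeneity of BS obstructs any direct spectral reduction and one has to track the interaction of the logarithm with scalar multiplication inside the nonlinear argument $A^{1/2}B^{-1}A^{1/2}$.
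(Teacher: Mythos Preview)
Your extension step and the scaling computation are exactly what the paper does: the map $\tilde\phi(X)=\tau(X)\,\phi(X/\tau(X))$ is, by construction, surjective, positively homogeneous and trace-preserving, and your scaling identity is precisely why the additive $\log$-corrections cancel so that $\tilde\phi$ preserves $S_{BS}$ on all of $\App$.

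Where you diverge is in how to handle the cone-level step. You treat the existence of a ``cone-level companion theorem'' as the main obstacle and sketch a speculative attack via scalar probing and approximate projections. The paper avoids all of this by first proving an order characterization (Lemma~\ref{L:CBS}, itself an immediate consequence of Lemma~\ref{L:BS} applied to $f=\log$): for $A,B\in\App$ one has $A\le B$ if and only if $S_{BS}^\tau(X\Vert B)\le S_{BS}^\tau(X\Vert A)$ for all $X\in\App$. Once this is in hand, the preservation identity for $\tilde\phi$ instantly says that $\tilde\phi$ is an order isomorphism between $\App$ and $\Bpp$. Since $\tilde\phi$ is already positively homogeneous and trace-preserving by construction, case~(c1) of Theorem~\ref{T:pot} together with Corollary~\ref{C:C} yields $\tilde\phi(A)=CJ(A)$ with central $C\in\Bpp$ and a Jordan *-isomorphism $J$, and the restriction to $\DtAi$ finishes the proof.

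So the genuine gap in your proposal is the missing order characterization Lemma~\ref{L:CBS}. Your alternative route---recovering homogeneity from $B=\lambda A$ and then extracting spectral/order information via $\varepsilon P$ probes---is not needed, and the second step in particular is left vague (it is unclear how the expansion you write would actually force $\tilde\phi$ to be order-preserving). The paper's argument is both shorter and rigorous: the non-homogeneity of $S_{BS}$ is not an obstacle at all, because the order characterization involves only inequalities $S_{BS}^\tau(X\Vert B)\le S_{BS}^\tau(X\Vert A)$ with a common first argument, where no scaling correction is needed.
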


After describing the symmetry groups of density spaces with respect to the Umegaki and Belavkin-Staszewski relative entropies, which turn to be the same as the symmetry groups with respect to any sorts of quantum R\'enyi relative entropies above,
we conclude the paper with the following analogue of Theorem \ref{T:commut}.

\begin{theorem}\label{T:UBS}
Let $\A, \B$ be $C^*$-algebras with faithful traces $\tau,\omega$, respectively, and let $\phi:\DtAi\to \DtAmi$ be a surjective map which satisfies
\begin{equation*}\label{E:42}
S_{U}^{\omega}(\phi(A)||\phi(B))=S_{BS}^{\tau}(A||B), \quad A,B\in \DtAi.
\end{equation*}
Then the algebras $\A, \B$ are necessarily commutative in which case the Umegaki and the Belavkin-Staszewski relative entropies coincide and hence Theorem \ref{T:Udens} or \ref{T:BSdens} applies and provides the form of $\phi$, in which the corresponding Jordan *-isomorphism is necessarily an algebra *-isomorphism.
\end{theorem}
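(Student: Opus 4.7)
The plan is to follow the template of Theorem~\ref{T:commut}. The key observation is that the Umegaki and Belavkin--Staszewski relative entropies agree on every commuting pair: if $A,B\in\DtAi$ satisfy $AB=BA$, then $A^{\fel}B^{-1}A^{\fel}=AB^{-1}$, whence $\log(A^{\fel}B^{-1}A^{\fel})=\log A-\log B$ and $S_{BS}^\tau(A||B)=S_U^\tau(A||B)$. Restricting the hypothesis to commuting pairs therefore gives
\[
S_U^\omega(\phi(A)||\phi(B))=S_U^\tau(A||B)\quad\text{whenever } AB=BA.
\]
In particular, specialising $B$ to the normalised identity $E_0=\mathbf{1}/\tau(\mathbf{1})$, or letting $B$ vary within the abelian $C^*$-subalgebra $C^*(A,\mathbf{1})$, already yields rich one-parameter families of functional identities involving $\phi(A)$ and $\log \phi(B)$. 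Note also that $\phi$ is automatically a bijection, since both $S_U$ and $S_{BS}$ are faithful relative entropies.

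The main step is to bootstrap this commuting-pair preservation into full preservation of the Umegaki relative entropy on $\DtAi\times\DtAi$. For fixed $A\in\DtAi$ and a self-adjoint $X\in\A$ commuting with $A$, I would apply the identity along the curve $B_t=\exp(tX)/\tau(\exp(tX))\in\DtAi$ (which commutes with $A$ for every $t$) and differentiate in $t$; using the linearity of $S_U^\tau(A||\cdot)$ in $\log(\cdot)$, this extracts linear functional identities on $\phi$ of the same shape as those exploited in the proof of Theorem~\ref{T:Udens} (equivalently, Theorem~1 of \cite{ML17a}). Sweeping $A$ through $\DtAi$ and $X$ through its commutant, and invoking bijectivity of $\phi$, one recovers the full Umegaki-preservation
\[
S_U^\omega(\phi(A)||\phi(B))=S_U^\tau(A||B),\quad A,B\in\DtAi.
\]
Theorem~\ref{T:Udens} then yields a central $C\in\Bpp$ and a Jordan *-isomorphism $J:\A\to\B$ with $\omega(CJ(X))=\tau(X)$ such that $\phi(\cdot)=CJ(\cdot)$.

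Substituting this standard form back into the hypothesis, and using that $J$ preserves the continuous functional calculus of positive invertible elements (so $J(\log A)=\log J(A)$), that $C$ is central, and that $\omega\circ(CJ)=\tau$, a direct computation reduces $S_U^\omega(CJ(A)||CJ(B))$ to $S_U^\tau(A||B)$. Combined with the given equality this forces
\[
S_U^\tau(A||B)=S_{BS}^\tau(A||B),\quad A,B\in\DtAi.
\]
It is classical that $S_U\le S_{BS}$ with equality precisely on commuting pairs; hence the identity above for every $A,B$ forces $\A$ to be commutative, and then $\B$ is commutative via the bijectivity of $J$. The asserted form of $\phi$ is then furnished by Theorem~\ref{T:Udens} (or \ref{T:BSdens}), with $J$ automatically an algebra *-isomorphism. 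The principal obstacle is the bootstrapping in the second paragraph: the commuting-pair preservation furnishes only partial data along ``diagonal'' families, and promoting it to the joint preservation of $S_U$ demands a careful use of the additive structure of the Umegaki formula together with a differentiation/density argument adapted from the proof of Theorem~\ref{T:Udens}.
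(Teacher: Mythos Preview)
Your proposed route diverges substantially from the paper's, and the central ``bootstrapping'' step is a genuine gap rather than a routine detail. You restrict the hypothesis to commuting pairs to obtain $S_U^\omega(\phi(A)\|\phi(B))=S_U^\tau(A\|B)$ whenever $AB=BA$, and then claim that differentiating along curves $B_t=\exp(tX)/\tau(\exp(tX))$ with $X$ in the commutant of $A$ will promote this to full Umegaki preservation. But the derivative on the $\B$-side is $\tfrac{d}{dt}\big|_{t=0}\,\omega(\phi(A)\log\phi(B_t))$, and you have no control whatsoever over how $\phi$ moves along the curve $t\mapsto B_t$: there is no a priori continuity, no homogeneity, and no algebraic identity relating $\phi(B_t)$ to $\phi(B_0)$ or to $X$. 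So the differentiation produces nothing usable. Moreover, even granting perfect knowledge of $\omega(\phi(A)\log\phi(B))$ for all \emph{commuting} $A,B$, there is no mechanism in your sketch for inferring the same quantity on non-commuting pairs; the data live entirely inside maximal abelian subalgebras and you never explain how to glue them. You yourself flag this as ``the principal obstacle'' and then wave at it; that is where the argument actually fails.

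For comparison, the paper's proof is short and avoids all of this. One extends $\phi$ to the full cone $\App$ by the homogeneous formula $\psi(A)=\tau(A)\,\phi(A/\tau(A))$, checks that the extended map still intertwines $S_{BS}^\tau$ and $S_U^\omega$, and then invokes the order characterizations already established: by Lemma~\ref{L:CBS}, $A\le B$ iff $S_{BS}^\tau(X\|B)\le S_{BS}^\tau(X\|A)$ for all $X$, while by Lemma~\ref{L:U}, $\log\phi(A)\le\log\phi(B)$ iff $S_U^\omega(Y\|\phi(B))\le S_U^\omega(Y\|\phi(A))$ for all $Y$. Hence the extended $\phi$ is a positive homogeneous surjection with $A\le B\Leftrightarrow\log\phi(A)\le\log\phi(B)$, which is exactly case (c4) of Theorem~\ref{T:pot}; that result immediately forces $\A$ and $\B$ to be commutative. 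No bootstrapping, no differentiation along curves, and no appeal to the inequality $S_U\le S_{BS}$ (which, incidentally, you would also need to justify in the present $C^*$-algebraic setting, including the equality case). The moral is that the order-theoretic machinery of Lemmas~\ref{L:U}, \ref{L:CBS} and Theorem~\ref{T:pot} is precisely designed to handle mixed pairs of relative entropies in one stroke.
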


\section{Proofs and some further results}

In this section we present the proofs of our results formulated above. Moreover, we also present some additional statements, Theorem  \ref{T:homo}, Theorem \ref{T:pot} and Theorem \ref{T:Conedivided} what we obtain on the way. Let us tell in advance that our basic idea in the proofs is simple and can be formulated as follows. We show that our maps under considerations are closely related to certain order isomorphisms. We describe the forms of those isomorphisms and then obtain the desired results. However, the realization of this simple idea, as we will see, is quite complicated. 

In the first part of the preparations we present characterizations of the order (the usual one among self-adjoint elements in $C^*$-algebras) in terms of the various quantum  R\'enyi relative entropies. In the arguments of several such characterizations we will use the following auxiliary result. If $\A$ is a $C^*$-algebra, then $\As$ stands for the space of all of its self-adjoint elements.

\begin{lemma}\label{L:6}
Let $\A$ be a $C^*$-algebra with a faithful trace $\tau$, and let $f:[a,b] \to \R$ be a continuously differentiable function. Pick $X,Y\in \As$ such that $\sigma(X+sY)\subset [a,b]$ holds for all $s$ from a nondegenerate real interval $[c,d]$. Then we have
\begin{equation}\label{E:6}
\frac{d}{ds} \tau \ler{f(X+sY)}_{|{s=t}}=\tau (f'(X+tY)Y)=\tau(Y^\fel f'(X+tY) Y^\fel), \quad t\in [c,d].
\end{equation}
In particular, if $f$ is increasing, then for any $A,B\in \As$ with $\sigma(A),\sigma(B)\subset [a,b]$ and $A\leq B$, we have $\tau(f(A))\leq \tau(f(B))$. Moreover, if $f'$ is everywhere positive on $[a,b]$ and for a given pair $A,B\in \As$ we have $\sigma(A),\sigma(B)\subset [a,b]$, $A\leq B$ and $\tau(f(A))= \tau(f(B))$, then it follows that $A=B$. 
\end{lemma}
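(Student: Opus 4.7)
The plan is to establish \eqref{E:6} first for polynomial $f$ by direct expansion and trace cyclicity, then extend it to general $C^1$ functions through a Weierstrass approximation, and finally deduce the monotonicity assertions by integrating the resulting derivative formula and invoking the faithfulness of $\tau$.

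For $f(x)=x^n$ direct differentiation yields
\[
\frac{d}{ds}(X+sY)^n\Big|_{s=t}=\sum_{k=0}^{n-1}(X+tY)^{k}\,Y\,(X+tY)^{n-1-k};
\]
applying $\tau$ and using cyclicity $\tau(UV)=\tau(VU)$ collapses every one of the $n$ summands to $\tau(Y(X+tY)^{n-1})$, which equals $\tau(f'(X+tY)Y)$. By linearity, \eqref{E:6} holds for every polynomial. For a general $C^1$ function $f$ I would approximate $f'$ uniformly on $[a,b]$ by polynomials $p_n$ (Weierstrass) and set $f_n(x):=f(a)+\int_a^x p_n(u)\,du$, so that $f_n\to f$ and $f_n'\to f'$ uniformly on $[a,b]$. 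Since $\sigma(X+sY)\subset[a,b]$ uniformly in $s\in[c,d]$, the continuous functional calculus gives $\|f_n(X+sY)-f(X+sY)\|\leq \|f_n-f\|_\infty$ and the analogous bound for the derivatives, both uniform in $s$; the positive linear functional $\tau$ is bounded on the unital $C^*$-algebra $\A$, so the scalar functions $s\mapsto\tau(f_n(X+sY))$ converge pointwise to $s\mapsto\tau(f(X+sY))$ while their derivatives $\tau(f_n'(X+sY)Y)$ converge uniformly to $\tau(f'(X+sY)Y)$. The classical theorem that uniform convergence of derivatives together with pointwise convergence of values suffices to pass the derivative to the limit then yields the first equality in \eqref{E:6}; when $Y\geq 0$ the second equality is just trace cyclicity applied to $Y=Y^{1/2}\cdot Y^{1/2}$.

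For the monotonicity statements, given $A\leq B$ with $\sigma(A),\sigma(B)\subset[a,b]$, set $X=A$, $Y=B-A\geq 0$ and note that $X+sY=(1-s)A+sB$ has spectrum in $[a,b]$ for every $s\in[0,1]$. If $f$ is increasing then $f'\geq 0$ on $[a,b]$, so $f'((1-s)A+sB)\geq 0$ in $\A$ and $Y^{1/2}f'((1-s)A+sB)Y^{1/2}\geq 0$; taking $\tau$ and integrating over $[0,1]$ gives $\tau(f(B))-\tau(f(A))\geq 0$. If moreover $f'>0$ on $[a,b]$ and equality holds, the continuous nonnegative integrand $s\mapsto\tau(Y^{1/2}f'((1-s)A+sB)Y^{1/2})$ must vanish identically on $[0,1]$; evaluating at $s=0$ with $\delta:=\min_{[a,b]}f'>0$ gives $Y^{1/2}f'(A)Y^{1/2}\geq \delta Y\geq 0$, so $\delta\,\tau(Y)\leq 0$ and faithfulness of $\tau$ on $\Ap$ forces $Y=0$, i.e.\ $A=B$.

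The main technical point I expect is the passage from polynomials to general $C^1$ functions: everything rests on the norm continuity of the continuous functional calculus in the function argument, together with the fact that the parametrized spectra $\sigma(X+sY)$ all lie in the fixed compact interval $[a,b]$, so uniform polynomial approximation of $f$ and $f'$ transfers to uniform-in-$s$ operator-norm convergence. Once that is granted, interchanging $\tau$, the limit, and the derivative, and then invoking faithfulness in the equality case, is routine.
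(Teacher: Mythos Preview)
Your proof is correct and follows essentially the same route as the paper's: establish \eqref{E:6} for monomials via direct expansion and trace cyclicity, extend to polynomials by linearity, pass to general $C^1$ functions by uniform approximation of $f$ and $f'$ together with the standard calculus lemma on interchanging limits and derivatives, and then integrate the derivative formula over $[0,1]$ with $X=A$, $Y=B-A$ to obtain the monotonicity and equality statements. The only cosmetic differences are that you spell out the polynomial step and the construction of the approximants more explicitly, and in the equality case you bound $f'(A)\geq\delta I$ and use faithfulness on $\delta\,\tau(Y)$, whereas the paper argues directly that $(B-A)^{1/2}f'(A+t(B-A))(B-A)^{1/2}=0$ with the middle factor positive invertible forces $B-A=0$; both arguments are equivalent.
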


\begin{proof}
In the case of matrix algebras, the formula \eqref{E:6} was given in Theorem 11.9 in \cite{Petz}. In our general setting, choose $X,Y$ as above, i.e., let $X,Y\in \As$ be such that $\sigma(X+sY)\subset [a,b]$ holds for all $s$ from a nondegenerate interval $[c,d]$. One can easily verify that 
\eqref{E:6}
holds for any power function $f$ with nonnegative integer exponent. It then follows that it also holds whenever $f$ is a polynomial. We finally apply polynomial approximation. We choose a sequence $(p_n)$ of polynomials such that $p_n \to f$ and $p_n'\to f'$ uniformly on $[a,b]$. We refer to a well-known result from calculus stating that if a sequence of continuously differentiable functions and also the sequence of its derivatives converge uniformly, then the limit of the former sequence is continuously differentiable and its derivative equals the limit of the latter sequence. Applying that result and using the boundedness of trace functional $\tau$ (which follows from its positivity), the validity of the equality \eqref{E:6} follows easily.

Assume now that $f$ is increasing, $A,B\in \As$ are such that $\sigma(A),\sigma(B)\subset [a,b]$ and $A\leq B$. By \eqref{E:6} we have 
\begin{equation*}
\tau (f(B))-\tau(f(A))=\int_0^1 \tau \ler{ (B-A)^\fel f'(A+t(B-A))(B-A)^\fel}  \,dt.
\end{equation*}
Here the function what we integrate is a continuous nonnegative function. It follows that $\tau (f(B))-\tau(f(A))\geq 0$. 
Suppose next that $f'$ is everywhere positive on $[a,b]$ and $\tau (f(B))-\tau(f(A))=0$. We deduce from the above displayed equality that $\tau ( (B-A)^\fel f'(A+t(B-A))(B-A)^\fel)=0$, $t\in [0,1]$. This implies $(B-A)^\fel f'(A+t(B-A))(B-A)^\fel=0$, $t\in [0,1]$ and since the middle term here is positive invertible, we easily conclude that $B-A=0$.
\end{proof}

Now, our first characterization of the order in terms of quantum R\'enyi relative entropies is the following.

\begin{lemma}\label{L:1}
Let $\alpha\in ]0,\infty[$ be a real number, $\A$ be a $C^*$-algebra with a faithful trace $\tau$, and select $A,B\in \App$. We have $A\leq B$ if and only if $\tau\ler{(XAX)^\alpha} \leq \tau\ler{(XBX)^\alpha}$ holds for all $X\in \App$.

Therefore, for any given real number $z>0$, we have that 
$A^{\frac{\alpha}{z}}\leq B^{\frac{\alpha}{z}}$ if and only if $\tau\hy\Qaz(A||X)\leq \tau\hy\Qaz(B||X)$ is valid for all $X\in \App$, and we have $A^{\frac{1-\alpha}{z}}\leq B^{\frac{1-\alpha}{z}}$ if and only if $\tau\hy\Qaz(X||A)\leq \tau\hy\Qaz(X||B)$ holds for all $X\in \App$.
\end{lemma}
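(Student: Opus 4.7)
For the forward direction, $A \leq B$ implies $XAX \leq XBX$ for every $X \in \App$ by positivity of conjugation, so Lemma \ref{L:6} applied to the strictly increasing function $f(x) = x^\alpha$ (whose derivative $\alpha x^{\alpha-1}$ is everywhere positive on $(0,\infty)$) immediately delivers $\tau((XAX)^\alpha) \leq \tau((XBX)^\alpha)$.

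For the backward direction I first plan to reduce to the case $B = I$. Given $Y \in \App$, I would set $X := (B^{-1/2} Y^2 B^{-1/2})^{1/2} \in \App$; the map $Y \leftrightarrow X$ is then a bijection of $\App$. Repeated use of the tracial identity $\tau(f(PQ)) = \tau(f(QP))$, which is valid for continuous $f$ with $f(0) = 0$ by polynomial approximation combined with functional calculus applied to the positive sandwiches $PQP$ and $Q^{1/2}P^2Q^{1/2}$, rewrites $\tau((XAX)^\alpha)$ as $\tau((Y \tilde A Y)^\alpha)$ with $\tilde A := B^{-1/2} A B^{-1/2}$, and $\tau((XBX)^\alpha)$ as $\tau(Y^{2\alpha})$. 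Setting $C := \tilde A$, the whole statement therefore collapses to: show that if $\tau((YCY)^\alpha) \leq \tau(Y^{2\alpha})$ for every $Y \in \App$, then $C \leq I$.

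I would conclude by invoking Klein's trace inequality $\tau(f(P) - f(Q) - f'(Q)(P-Q)) \geq 0$ for convex $C^1$ functions $f$, which extends to our $C^*$-algebra-with-faithful-trace setting via the approximation scheme of Lemma \ref{L:6} together with the classical spectral diagonalization argument. For $\alpha > 1$, Klein applied to $x^\alpha$ with $P = YCY$, $Q = Y^2$, combined with the cyclic identity $\tau(Y^{2(\alpha-1)}(YCY - Y^2)) = \tau(Y^{2\alpha}(C - I))$ and the hypothesis, produces $\tau(Y^{2\alpha}(C-I)) \leq 0$ for every $Y \in \App$; the bijection $Y \mapsto Y^{2\alpha}$ of $\App$ then yields $\tau(Z(C-I)) \leq 0$ for all $Z \in \App$, which forces $C \leq I$. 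For $0 < \alpha < 1$ I instead apply Klein to $-x^\alpha$ (convex since $x^\alpha$ is concave) with the roles swapped ($P = Y^2$, $Q = YCY$); an analogous cyclic manipulation produces $\tau(H_Y(I - C)) \geq 0$ with $H_Y := Y(YCY)^{\alpha-1} Y \in \App$. Specializing $Y = g(C)$ for continuous $g > 0$ on $\sigma(C)$ yields $H_Y = C^{\alpha-1} g(C)^{2\alpha}$, which as $g$ varies exhausts the positive invertibles of the commutative subalgebra $C^*(C, I)$. Hence $\tau(h(C)(I - C)) \geq 0$ for every continuous $h > 0$ on $\sigma(C)$; by Riesz-Markov the restriction $\tau|_{C^*(C, I)}$ is integration against a positive Borel measure on $\sigma(C)$, and faithfulness of $\tau$ forces the measure to have full support, hence $\sigma(C) \subseteq (0, 1]$ and $C \leq I$.

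The two supplementary equivalences would follow from the main one by substitution. Setting $Y := X^{(1-\alpha)/(2z)}$ (a bijection of $\App$ since $\alpha \neq 1$) rewrites $\tau\hy\Qaz(A||X) = \tau((Y A^{\alpha/z} Y)^z)$, so the main equivalence with $\alpha$ replaced by $z$ yields the first supplement. For the second, one additional application of $\tau(f(PQ)) = \tau(f(QP))$ rewrites $\tau\hy\Qaz(X||A) = \tau((V A^{(1-\alpha)/z} V)^z)$ with $V := X^{\alpha/(2z)}$, reducing analogously. The main obstacle I anticipate is the $0 < \alpha < 1$ regime of the backward direction, where Klein's inequality yields the correct sign only after swapping arguments, and the resulting operator inequality contains enough information only after the commutative restriction $Y = g(C)$, which converts it through faithfulness of $\tau$ into the desired spectral constraint $\sigma(C) \subseteq (0, 1]$.
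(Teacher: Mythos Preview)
Your argument is correct and takes a genuinely different route from the paper in the backward direction. The paper does \emph{not} reduce to $B=I$; instead it works directly with $S=B-A$, picks continuous functions $f$ vanishing on $[0,\infty)$ so that $f(S)Bf(S)\le f(S)Af(S)$, and then combines this with the hypothesis (via Lemma~\ref{L:6} for $\alpha\ge 1$, and via operator monotonicity of $t\mapsto t^\alpha$ for $\alpha<1$) to force $f(S)Sf(S)=0$; letting $f$ approximate the indicator of the negative spectrum yields $PSP=0$ and hence $S\ge 0$. Your path---reducing to $C=B^{-1/2}AB^{-1/2}\le I$ via the unitary equivalence of sandwiches, then applying Klein's trace inequality---is a clean alternative: for $\alpha>1$ it is arguably slicker, giving $\tau(Z(C-I))\le 0$ for all $Z\in\App$ in one stroke. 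For $0<\alpha<1$ your argument is less direct than the paper's, since Klein only yields control over $H_Y=Y(YCY)^{\alpha-1}Y$ and you must retreat to the commutative subalgebra $C^*(C,I)$ to extract $\sigma(C)\subset(0,1]$; the paper's use of operator monotonicity avoids this detour. One small point: your justification of Klein's inequality in the abstract $C^*$-setting (``via the approximation scheme of Lemma~\ref{L:6} together with the classical spectral diagonalization argument'') is terse; the cleanest route is to pass to the GNS von Neumann closure where spectral projections are available and $\tau$ extends to a normal trace, rather than to invoke Lemma~\ref{L:6} directly. The supplementary equivalences are handled the same way in both proofs.
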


\begin{proof}
By Gelfand-Naimark theorem we may assume that $\A$ is a $C^*$-subalgebra of the full operator algebra $B(H)$ over some complex Hilbert space $H$ containing the identity $I$.

The necessity part of the first statement follows from Lemma \ref{L:6}. 

Assume now, that $\tau\ler{(XAX)^\alpha} \leq \tau\ler{(XBX)^\alpha}$ holds for all $X\in \App$. By the continuity of the power function and the trace, we have the same inequality also for any $X\in \Ap$.  Consider the self-adjoint element $B-A=S$  of $\A$ whose spectrum is contained in the interval $[a,b]\subset \R$. Let $f$ be any continuous nonnegative function on $[a,b]$ with values in $[0,1]$ which is zero  in the positive part of $[a,b]$. Then $f(S)\in  \Ap$, and we have $f(S)Sf(S)\leq 0$ implying $f(S)Bf(S)\leq f(S)Af(S)$. Assume $\alpha \geq 1$. Then we apply Lemma \ref{L:6} and obtain $\tau\ler{(f(S)Bf(S))^\alpha} \leq \tau\ler{(f(S)Af(S))^\alpha}$. However, by the assumption, we have the opposite inequality,  too. By Lemma \ref{L:6} we deduce $f(S)Bf(S)= f(S)Af(S)$. If $\alpha<1$, then we use the operator monotonicity of the power function with exponent $\alpha$ (observe that the function $t\mapsto t^\alpha$ is not differentiable at 0) to obtain $(f(S)Bf(S))^\alpha \leq (f(S)Af(S))^\alpha$ and then from the equality $\tau\ler{(f(S)Bf(S))^\alpha} = \tau\ler{(f(S)Af(S))^\alpha}$ we easily deduce that $f(S)Bf(S)= f(S)Af(S)$.
Therefore, we have $f(S)Sf(S)=0$. 
Now, we can choose a sequence $(f_n)$ of such functions $f$ which pointwise converges to the indicator function of the subinterval $[a,0[$ of $[a,b]$. We obtain that the corresponding operator sequence $(f_n(S))$ strongly converges to the spectral projection $P$ of $S$ corresponding to the interval $[a,0[$. It follows that $PSP=0$ which implies that $S$ is positive, i.e., we have $A\leq B$.

After this, the first statement in the last sentence of the lemma is apparent. As for the second one, observe that for any $C,D\in \App$ we have that $CD^2 C$ and $DC^2 D$ are unitarily equivalent (consider the polar decomposition of $DC$). It follows that for any $A,X\in \App$ we have 
\begin{equation*}
\tau\hy\Qaz(X||A)=\tau \ler{A^{\frac{1-\alpha}{2z}}X^{\frac{\alpha}{z}}A^{\frac{1-\alpha}{2z}}}^{z}=
\tau \ler{X^{\frac{\alpha}{2z}}A^{\frac{1-\alpha}{z}}X^{\frac{\alpha}{2z}}}^{z}
\end{equation*}
and then we can apply the first statement of the lemma.
\end{proof}

Next, we characterize the order on the positive definite cone by the quantity $\tau\hy\maxQa(.||.)$. In order to do that, we need the following observation.

\begin{lemma}\label{L:20}
Let $f:]0,\infty[\to \mathbb R$ be a continuous function. Assume that
the function $g:[0,\infty[\to \mathbb R$ defined by $g(t)=tf(t)$ for $t>0$ and $g(0)=0$ is continuous on $[0,\infty[$.
Let $H$ be a complex Hilbert space and $A\in B(H)$ be a positive operator with closed range $K
\subset H$. For any invertible positive operator $B\in B(H)$, we deduce that $AB^2 A$ is a positive invertible operator on $K$ and we have
\begin{equation}\label{E:37}
Af(AB^2A)A=
B^{-1} g(BA^2 B)B^{-1}.
\end{equation}
\end{lemma}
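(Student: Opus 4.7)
My plan is a standard polynomial-approximation argument: verify the identity first when $f$ is a polynomial by an explicit algebraic computation, and then pass to general continuous $f$ via the continuous functional calculus.

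First I would fix the setup. Since $A$ is positive with closed range $K=\ker(A)^{\perp}$, the restriction $A|_K:K\to K$ is bijective and, by the bounded inverse theorem, invertible on $K$. Combined with the invertibility of $B$, this yields that $AB^{2}A|_K = (BA|_K)^{*}(BA|_K)$ is positive invertible on $K$, with spectrum contained in some interval $[m,M]$ with $m>0$. Applying the standard fact that $T^{*}T$ and $TT^{*}$ share nonzero spectra (take $T=BA$) gives $\sigma(BA^{2}B)\subseteq\{0\}\cup[m,M]$. This also pins down the meaning of both sides of the identity: $f(AB^{2}A)$ is defined by functional calculus on $[m,M]$, while the hypothesis $g(0)=0$ makes $g(BA^{2}B)$ unambiguous on $\{0\}\cup[m,M]$.

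Next I would prove the polynomial identity
\[
A(AB^{2}A)^{n} A \;=\; (A^{2}B^{2})^{n} A^{2} \;=\; B^{-1}(BA^{2}B)^{n+1}B^{-1}, \qquad n\ge 0,
\]
by direct expansion, using $(AB^{2}A)^{n}=A(B^{2}A^{2})^{n-1}B^{2}A$ for $n\ge 1$ and $(BA^{2}B)^{n+1}=B(A^{2}B^{2})^{n}A^{2}B$; the case $n=0$ reduces to $A^{2}=B^{-1}(BA^{2}B)B^{-1}$. By linearity this yields $A\,p(AB^{2}A)\,A = B^{-1}q(BA^{2}B)B^{-1}$ for every polynomial $p$, where $q(t):=tp(t)$.

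To conclude, choose polynomials $p_{k}\to f$ uniformly on $[m,M]$ and set $q_{k}(t)=tp_{k}(t)$. Then $q_{k}\to g$ uniformly on $[m,M]$, and since $q_{k}(0)=0=g(0)$ we actually have $q_{k}\to g$ uniformly on $\{0\}\cup[m,M]$. Continuous functional calculus yields $p_{k}(AB^{2}A|_K)\to f(AB^{2}A|_K)$ and $q_{k}(BA^{2}B)\to g(BA^{2}B)$ in norm, and multiplying by the bounded operators $A$ and $B^{-1}$ preserves convergence; passing to the limit in the polynomial identity gives the claim. The main obstacle is not the algebra but precisely this bookkeeping at the spectral value $0$: $f$ may have no continuous extension there while $0\in\sigma(BA^{2}B)$ is permitted, and the hypothesis $g(0)=0$ is exactly what matches the two one-sided approximations on their respective spectra through the relation $q_{k}(t)=tp_{k}(t)$.
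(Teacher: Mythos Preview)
Your proposal is correct and follows essentially the same route as the paper's proof: verify the identity for polynomials and then pass to the limit via uniform polynomial approximation, using that $\sigma(AB^{2}A|_K)$ and $\sigma(BA^{2}B)$ can differ only at $0$. The paper merely sketches these steps, whereas you spell out the monomial identity $A(AB^{2}A)^{n}A=B^{-1}(BA^{2}B)^{n+1}B^{-1}$ and the bookkeeping at the spectral point $0$ explicitly; the arguments are otherwise identical.
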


\begin{proof}
Clearly, the restriction of $A$ to $K$ is a positive invertible operator. It follows easily that the restriction of $AB^2A$ to $K$ is also invertible. The formula \eqref{E:37} holds whenever $f$ is a polynomial.
Uniformly approximating $f$ on the spectrum of the operator $AB^2A$ (when it is considered on the subspace $K$), and taking into consideration that the spectrum of $AB^2A$ and $BA^2B$ may differ only in one single element, namely 0, we obtain the general conclusion.
\end{proof}

Using the previous observation we obtain the following characterization of the order.

\begin{lemma} \label{L:BS}
Let $f:]0,\infty[\to \mathbb R$ be a strictly increasing continuous function which is also operator monotone. 
Assume that the function $g:[0,\infty[\to \mathbb R$ defined by $g(t)=tf(t)$ for $t>0$ and $g(0)=0$ is continuous on $[0,\infty[$. Let $\A$ be a $C^*$-algebra with faithful trace $\tau$ and select $A,B\in \App$. The following assertions are equivalent:
\begin{itemize}
\item[(i)] $A^2\leq B^2$;
\item[(ii)] $\tau(Xf(XA^2 X)X)\leq  \tau(Xf(XB^2 X)X)$ holds for all $X\in \App$.
\end{itemize}
\end{lemma}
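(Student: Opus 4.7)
The direction (i)$\Rightarrow$(ii) follows immediately from the operator monotonicity of $f$ and the positivity of $\tau$: given $A^2\leq B^2$, conjugation by any $X\in\App$ yields $XA^2X\leq XB^2X$ in $\App$, operator monotonicity of $f$ gives $f(XA^2X)\leq f(XB^2X)$, a further conjugation by $X$ preserves the order, and applying $\tau$ delivers (ii).

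For (ii)$\Rightarrow$(i) I plan to argue by contradiction. Setting $S:=B^2-A^2$ and assuming $S\not\geq 0$, let $s_0:=\inf\sigma(S)<0$ and, after faithfully representing $\A$ on a Hilbert space and extending $\tau$ to a faithful normal trace on the enveloping von Neumann algebra (to legitimise spectral projections), let $P$ be the spectral projection of $S$ corresponding to $[s_0,s_0/2]$. Then $P$ is a nonzero projection commuting with $S$, and with $\delta:=|s_0|/2>0$ one has $-PSP\geq\delta P$; hence, in the corner, the elements $PA^2P$ and $PB^2P$ are positive invertible (since $A,B\in\App$ yield $PA^2P\geq c^2 P$ for some $c>0$) and satisfy
\[
PA^2P - PB^2P \;=\; -PSP \;\geq\; \delta P \;>\; 0.
\]
The goal will be to use (ii) to derive the reverse inequality $\tau(f(PA^2P))\leq\tau(f(PB^2P))$ at the level of traces inside the corner.

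To pass from invertible test elements to $P$, I will approximate by $X_n:=h_n(S)+(1/n)I\in\App$, where $h_n:\R\to[0,1]$ are continuous functions converging pointwise to $\chi_{[s_0,s_0/2]}$, so that $X_n\to P$ and $X_n^2\to P$ strongly. Lemma~\ref{L:20} rewrites $X_nf(X_nA^2X_n)X_n = A^{-1}g(AX_n^2A)A^{-1}$, and continuity of $g$ on $[0,\infty[$ combined with the strong convergence $AX_n^2A\to APA$ gives bounded strong convergence of the right-hand side to $A^{-1}g(APA)A^{-1}$; analogously for $B$. Applying Lemma~\ref{L:20} once more with the closed-range projection $P$ identifies this common limit with $Pf(PA^2P)P$, which inside the corner is simply $f(PA^2P)$. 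Normal continuity of the extended trace then allows passage to the limit in (ii), yielding $\tau(f(PA^2P))\leq\tau(f(PB^2P))$. Operator monotonicity of $f$ applied inside the corner supplies the opposite inequality, so we obtain equality. Since $f(PA^2P)-f(PB^2P)$ is then a positive element of the corner with vanishing restricted trace, and the restriction of $\tau$ to the corner remains faithful, we conclude $f(PA^2P)=f(PB^2P)$; strict scalar monotonicity of $f$ makes the associated functional calculus injective on positive invertible elements (its continuous inverse $f^{-1}$ is defined on $f(]0,\infty[)$), so $PA^2P=PB^2P$, contradicting the strict gap $\delta P>0$. Therefore $S\geq 0$, i.e., $A^2\leq B^2$. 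The most delicate point in the argument—and the one I expect to be the main obstacle—is precisely this limit-and-identification step via Lemma~\ref{L:20}, which bridges the quantity $Xf(XA^2X)X$, defined cleanly only for invertible $X$, with the globally defined expression $A^{-1}g(AX^2A)A^{-1}$, so that the passage from the invertible approximants $X_n\in\App$ to the non-invertible projection $P$ delivers the correct trace identity in the corner.
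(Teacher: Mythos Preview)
Your argument is correct and follows essentially the same route as the paper's own proof: represent on a Hilbert space, approximate a spectral projection of $S=B^2-A^2$ by continuous functions of $S$, use Lemma~\ref{L:20} to trade the $f$-expression (defined only for closed-range test elements) for the globally defined $g$-expression, combine the trace inequality from (ii) with the operator inequality coming from operator monotonicity of $f$ to force equality, and finally invoke strict monotonicity of $f$ to reach a contradiction.

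The technical packaging differs slightly. You keep your approximants $X_n=h_n(S)+(1/n)I$ invertible so that (ii) applies directly, then take the strong limit \emph{inside} the trace; this obliges you to extend $\tau$ to a faithful normal trace on a von Neumann closure. (One caution: ``enveloping von Neumann algebra'' usually means $\A^{**}$, where faithfulness of the extension is not automatic; you should work in the weak closure of the GNS representation of $\tau$, as the paper does elsewhere, where Takesaki's result guarantees a faithful normal extension.) The paper instead uses a two-parameter approximation $Y_{nm}=X_mP_n$: the $P_n$ factor gives closed range so Lemma~\ref{L:20} applies and yields a genuine operator inequality, while the norm limit $n\to\infty$ and a continuity argument for (ii) on $\Ap$ give the reverse trace inequality; faithfulness then produces an operator \emph{equality} inside $\A$ itself, and only at the very end is a strong limit taken. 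The advantage of the paper's ordering is that $\tau$ is never applied outside $\A$, so no extension is needed; the advantage of yours is that a single approximation parameter suffices.
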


\begin{proof}
The implication (i)$\Rightarrow$(ii) is trivial, it follows from the operator monotonicity of $f$ and the monotonicity of the trace $\tau$.

Assume now that (ii) holds. As we have already done in the proof of Lemma \ref{L:1}, assume that $\A$ is a $C^*$-subalgebra of $B(H)$ for some complex Hilbert space $H$ containing the identity $I$. 

Consider the spectral measure of the self-adjoint operator $B^2-A^2\in \A$ on the Borel sets of the real line. Let $P_0$ be the spectral measure of the interval $]-\infty, 0[$ and $P_n$ be the spectral measure of $]-\infty,-1/n]$, $n\in \mathbb N$. For any positive integer $m$, we consider the continuous function $h_m$ which is 1 on the interval $]-\infty , -1/m]$, has derivative $-m$ on $]-1/m,0[$, and equals zero on $[0,\infty[$. Define $X_m\in \Ap$ by $X_m=h_m(B^2-A^2)$. We have that $Y_{nm}=X_mP_n=P_nX_m$ is a positive operator with closed range. Clearly, $Y_{nm} B^2Y_{nm}\leq Y_{nm} A^2Y_{nm}$, hence, by the previous lemma, we have that
\begin{equation*}
B^{-1}g(B{Y_{nm}}^2B)B^{-1}=
Y_{nm}f(Y_{nm} B^2 Y_{nm})Y_{nm}\leq
Y_{nm}f(Y_{nm} A^2 Y_{nm})Y_{nm}=
A^{-1}g(A{Y_{nm}}^2A)A^{-1}
\end{equation*}
holds for all $n,m\in \mathbb N$. The double sequence $(Y_{nm})_{nm}$ is bounded and we see that for fixed $m$,  when $n$ tends to infinity, $(Y_{nm})$ converges in norm to $X_m$. It follows that $(A{Y_{nm}}^2A)_{nm}$ is also bounded and, for fixed $m$, it converges in norm to $A{X_{m}}^2A$ as $n\to \infty$. Consequently, $g(A{Y_{nm}}^2A)$ converges in norm to $g(A{X_{m}}^2A)$ as $n\to \infty$. Obviously, the same holds for $B$ in the place of $A$, too. Therefore, we obtain
\begin{equation}\label{E:31}
B^{-1}g(B{X_m}^2B)B^{-1}\leq
A^{-1}g(A{X_m}^2A)A^{-1}, \quad m\in \mathbb N.
\end{equation}
On the other hand, by our condition (ii) and the identity \eqref{E:37}, we deduce that
\begin{equation*} 
\tau(B^{-1}g(B{X}^2B)B^{-1})=\tau(Xf(XB^2X)X)\geq \tau(Xf(XA^2X)X)=
\tau(A^{-1}g(A{X}^2A)A^{-1})
\end{equation*}
holds for all $X\in \App$. Using continuity arguments, we  can infer that the inequality holds also for any $X\in \Ap$ and hence we obtain
\begin{equation*}
\tau(B^{-1}g(B{X_m}^2B)B^{-1})\geq
\tau(A^{-1}g(A{X_m}^2A)A^{-1}), \quad m\in \mathbb N.
\end{equation*}
By \eqref{E:31}, it follows that
\begin{equation*}
\tau(B^{-1}g(B{X_m}^2B)B^{-1})=
\tau(A^{-1}g(A{X_m}^2A)A^{-1}), \quad m\in \mathbb N
\end{equation*}
and, by \eqref{E:31} again and using the faithfulness of $\tau$, we infer that
\begin{equation*}
B^{-1}g(B{X_m}^2B)B^{-1}=
A^{-1}g(A{X_m}^2A)A^{-1}, \quad m\in \mathbb N.
\end{equation*}
We know that the bounded continuous real functions are strongly continuous (see, e.g., 4.3.2. Theorem in \cite{Mur}). Clearly, the sequence $(X_m)$ is bounded and strongly converges to $P_0$.
It follows that, taking strong limits in the equality above, we can infer that 
\begin{equation*}
B^{-1}g(B{P_0}^2B)B^{-1}=
A^{-1}g(A{P_0}^2A)A^{-1}.
\end{equation*}
Using \eqref{E:37} again, we get that
\begin{equation*}
P_0f(P_0B^2P_0)P_0=P_0f(P_0A^2P_0)P_0.
\end{equation*}
This means that, on the range of $P_0$, we have the identity $f(P_0B^2P_0)=f(P_0A^2P_0)$ which, by the strict monotonicity (and hence invertibility) of $f$, implies $P_0B^2P_0=P_0A^2P_0$. We infer
$P_0(B^2-A^2)P_0=0$ and, by the particular choice of $P_0$, it follows that $B^2\geq A^2$. 
\end{proof}

Using the above result we can easily get the following.

\begin{lemma}\label{L:7}
Let $\alpha\in ]0,2]$, $\alpha\neq 1$ be a real number, $\A$ be a $C^*$-algebra with a faithful trace $\tau$, and select $A,B\in \App$. For $0<\alpha<1$ we have $A\leq B$ if and only if 
$\tau\hy\maxQa(A||X)\leq \tau\hy\maxQa(B||X)$ holds for all $X\in \App$, while for $1<\alpha \leq 2$ we have $A\leq B$ if and only if $\tau\hy\maxQa(X||B)\leq \tau\hy\maxQa(X||A)$ holds for all $X\in \App$.
\end{lemma}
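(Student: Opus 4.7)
The strategy is to reduce to Lemma \ref{L:BS} by picking $f$ so that, after Lemma \ref{L:20} swaps outer and inner positions, the integrand $\tau(Xf(XP^2X)X)$ in Lemma \ref{L:BS} becomes a $\tau\hy\maxQa$-quantity, and then to translate via the substitutions $X\to X^{1/2}$, $P\to A^{1/2}$, and $A\to A^{-1}$. The forward directions are immediate from monotonicity: for $0<\alpha<1$, operator monotonicity of $t\mapsto t^\alpha$ applied to $X^{-1/2}AX^{-1/2}\leq X^{-1/2}BX^{-1/2}$; for $1<\alpha\leq 2$, the calculation below, read in its easy direction. The substance is the converses.

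For $1<\alpha\leq 2$, take $f(t)=t^{\alpha-1}$: continuous, strictly increasing, operator monotone on $]0,\infty[$ since $\alpha-1\in \,]0,1]$, with $g(t)=tf(t)=t^\alpha$ extending continuously to $g(0)=0$. Lemma \ref{L:20} with its $A=X$ and $B=P$ gives
\begin{equation*}
\tau\bigl(X(XP^2X)^{\alpha-1}X\bigr)=\tau\bigl(P^{-1}(PX^2P)^{\alpha}P^{-1}\bigr)=\tau\hy\maxQa(X^2||P^{-2}),
\end{equation*}
so Lemma \ref{L:BS} becomes $P^2\leq Q^2 \iff \tau\hy\maxQa(X^2||P^{-2})\leq \tau\hy\maxQa(X^2||Q^{-2})$ for every $X\in\App$. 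Substituting $X\to X^{1/2}$, $P\to A^{1/2}$, $Q\to B^{1/2}$, then $A\to A^{-1}$, $B\to B^{-1}$ (under which the order reverses), and finally swapping $A\leftrightarrow B$ produces the asserted $A\leq B\iff \tau\hy\maxQa(X||B)\leq\tau\hy\maxQa(X||A)$ for all $X\in\App$.

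For $0<\alpha<1$, take $f(t)=-t^{\alpha-1}$: strictly increasing (derivative $(1-\alpha)t^{\alpha-2}>0$), continuous, and operator monotone since $t^{\alpha-1}$ is operator antitone for $\alpha-1\in \,]-1,0[$, with $g(t)=-t^\alpha$, $g(0)=0$. The identical Lemma \ref{L:BS}/Lemma \ref{L:20} calculation, with a sign flip in the resulting inequality and the same substitutions as above, delivers $A\leq B\iff \tau\hy\maxQa(X||A)\leq \tau\hy\maxQa(X||B)$ for all $X\in\App$. To convert this second-slot characterization into the first-slot form stated in the lemma, I invoke the self-duality $X\#_\alpha A=A\#_{1-\alpha}X$ of the $\alpha$-weighted geometric mean, itself a one-line consequence of Lemma \ref{L:20} together with the identity $(A^{-1/2}BA^{-1/2})^{-1}=A^{1/2}B^{-1}A^{1/2}$; taking traces gives $\tau\hy\maxQa(X||A)=\tau\hy Q_{1-\alpha}^{max}(A||X)$, so writing $\beta=1-\alpha\in \,]0,1[$ and renaming $\beta$ back to $\alpha$ yields the desired $A\leq B\iff \tau\hy\maxQa(A||X)\leq \tau\hy\maxQa(B||X)$ for all $X\in\App$. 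The main obstacle throughout is the careful bookkeeping across the chain of square-roots, inversions, and renamings, and the verification that each chosen $f$ meets every hypothesis of Lemma \ref{L:BS}; the restriction $\alpha\leq 2$ in the first case is essential because it places $\alpha-1$ inside the operator-monotonicity range $]0,1]$ of the power function.
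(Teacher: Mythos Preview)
Your proof is correct and follows essentially the same approach as the paper: reduce to Lemma~\ref{L:BS} with an appropriate power function $f$, then unwind via substitutions. For $1<\alpha\le 2$ you and the paper both take $f(t)=t^{\alpha-1}$ and the arguments are identical. For $0<\alpha<1$ there is a minor variation: the paper takes $f(t)=-t^{-\alpha}$, which (after noting $(XA^2X)^{-\alpha}=(X^{-1}A^{-2}X^{-1})^\alpha$) lands directly on the first-slot characterization $\tau\hy\maxQa(A||X)\le\tau\hy\maxQa(B||X)$, whereas your choice $f(t)=-t^{\alpha-1}$ first yields the second-slot inequality and then requires the self-duality $\tau\hy\maxQa(X||A)=\tau\hy Q_{1-\alpha}^{max}(A||X)$ to flip slots. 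Both routes are fine; the paper's choice simply saves that last step.
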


\begin{proof}
First assume that $\alpha<1$. Applying Lemma \ref{L:BS} for the operator monotone function $f(t)=-t^{-\alpha}$, $t>0$, we obtain that $A^2\leq B^2$ is valid if and only if  $\tau(X(X^{-1}B^{-2} X^{-1})^\alpha X)\leq  \tau(X(X^{-1}A^{-2} X^{-1})^\alpha X)$ holds for all $X\in \App$. One can easily conclude from this that
we have $A\leq B$ if and only if $\tau\hy\maxQa(A||X)\leq \tau\hy\maxQa(B||X)$ holds for all $X\in \App$.

Assume now that $1<\alpha \leq 2$. By Lemma \ref{L:BS} again, we have that $A^2\leq B^2$ is valid if and only if $\tau(X(XA^2 X)^{\alpha -1}X)\leq  \tau(X (XB^2 X)^{\alpha-1} X)$ holds for all $X\in \App$. On the other hand, by Lemma \ref{L:20} we have
$$ 
X(XA^{-2} X)^{\alpha -1}X=A(A^{-1} X^2A^{-1})^\alpha A
$$
for any pairs $A,X\in \App$. From these we can easily conclude that 
$A\leq B$ if and only if $\tau\hy\maxQa(X||B)\leq \tau\hy\maxQa(X||A)$ holds for all $X\in \App$.
\end{proof}

The next result gives a characterization of the order in terms of the quantity $\tau\hy\moQa(.||.)$. 

\begin{lemma}\label{L:4}
Let $\A$ be a $C^*$-algebra with a faithful trace $\tau$. Pick $T,S\in \As$. We have $T\leq S$ if and only if
$\tau(\exp(T+X))\leq \tau(\exp(S+X))$ holds for all $X\in \As$. 

In particular, for any $\alpha\in ]0,\infty[$, $\alpha\neq 1$ and for arbitrary $A,B\in \App$, we have $\log A\leq \log B$ if and only if $\tau\hy\moQa(A||X) \leq \tau\hy\moQa(B||X)$ holds for all $X\in \App$. Moreover, if $\alpha>1$, then for any $A,B\in \App$ we have $\log A\leq \log B$ if and only if $\tau\hy\moQa(X||B) \leq \tau\hy\moQa(X||A)$ holds for all $X\in \App$. 
\end{lemma}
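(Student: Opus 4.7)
The plan is to prove the main equivalence ($T\leq S$ iff $\tau(\exp(T+X))\leq \tau(\exp(S+X))$ for every $X\in \As$) and then derive both quantum-R\'enyi consequences by linear substitutions. The forward direction is immediate from Lemma \ref{L:6}: if $T\leq S$ then $T+X\leq S+X$, and $\exp$ is continuously differentiable and strictly increasing on any compact interval containing the two spectra.

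For the converse, after the substitution $X\mapsto X-T$ and setting $Z:=S-T$, I need to show that $\tau(\exp(X))\leq \tau(\exp(X+Z))$ for every $X\in \As$ forces $Z\geq 0$. Applying Lemma \ref{L:6} to the smooth family $s\mapsto \tau(\exp(X+sZ))$ on $[0,1]$ rewrites the hypothesis as
\begin{equation*}
\int_0^1 \tau\bigl(Z\exp(X+sZ)\bigr)\,ds\geq 0, \quad X\in \As.
\end{equation*}
The crucial step is to specialize to $X=\mu f(Z)$, with $\mu\geq 0$ and $f\geq 0$ any continuous function on $\sigma(Z)$. Since $f(Z),Z,\exp(sZ)$ all lie in the commutative $C^*$-subalgebra $C^*(Z,I)$, scalar calculus collapses the inner integral to $\exp(\mu f(Z))(\exp(Z)-I)$, so the condition becomes
\begin{equation*}
\tau\bigl(\exp(\mu f(Z))(\exp(Z)-I)\bigr)\geq 0, \quad \mu\geq 0,\ f\geq 0 \text{ continuous on } \sigma(Z).
\end{equation*}

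To extract a contradiction when $Z$ has any negative spectrum, I would use the Gelfand--Riesz isomorphism $C^*(Z,I)\cong C(\sigma(Z))$ to represent $\tau$ on this subalgebra by integration against a finite positive Borel measure $\mu_0$ whose support coincides with $\sigma(Z)$ (by faithfulness of $\tau$). With the bump $f(\nu):=\max(0,-\delta-\nu)$, which is strictly positive exactly on $\{\nu<-\delta\}$, and letting $\mu\to+\infty$, monotone convergence of decreasing nonpositive functions (note that $e^\nu-1<0$ on $\{\nu<-\delta\}$) drives $\int e^{\mu f(\nu)}(e^\nu-1)\,d\mu_0(\nu)$ to $-\infty$ whenever $\mu_0(\{\nu<-\delta\})>0$, contradicting the displayed inequality. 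Hence $\sigma(Z)\cap {]{-\infty},-\delta[}=\emptyset$ for every $\delta>0$, so $Z\geq 0$.

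The two quantum-R\'enyi consequences then follow by linear substitution in the main equivalence. For the first, set $T=\alpha\log A$, $S=\alpha\log B$ and let the free variable $X$ range over $(1-\alpha)\log X'$ with $X'\in \App$; since $\log:\App\to\As$ is surjective and $1-\alpha\neq 0$, this substitution exhausts $\As$, and the equivalence yields $\log A\leq \log B$ after dividing by $\alpha>0$. For the second, assuming $\alpha>1$, put $T=(1-\alpha)\log B$, $S=(1-\alpha)\log A$ and let $X$ range over $\alpha\log X'$; the equivalence gives $(1-\alpha)\log B\leq (1-\alpha)\log A$, which is equivalent to $\log A\leq \log B$ because $1-\alpha<0$. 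The principal obstacle is the reverse direction of the main equivalence: finding the right test element $X=\mu f(Z)$ that commutes with $Z$, so that the operator-theoretic hypothesis reduces to a scalar measure-theoretic inequality inside $C^*(Z,I)$ from which the contradiction can be obtained by a blow-up in $\mu$.
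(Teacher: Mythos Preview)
Your proof is correct and follows essentially the same strategy as the paper: both reduce the sufficiency direction to the commutative subalgebra $C^*(Z,I)$ by specializing the free test element $X$ to functions of $Z$, so that the operator inequality collapses to a scalar one. The paper gets $\tau((\exp Z - I)E)\geq 0$ for all positive $E$ commuting with $Z$ directly (your integral detour via Lemma~\ref{L:6} is unnecessary once $X$ commutes with $Z$) and then finishes by invoking the bump-function/spectral-projection argument of Lemma~\ref{L:1}, whereas you pass to the Riesz representing measure and use a blow-up $\mu\to\infty$; both finishes are standard and valid.
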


\begin{proof}
The necessity part of the first statement follows from Lemma \ref{L:6}.

As for the sufficiency, assume that $\tau(\exp(T+X))\leq \tau(\exp(S+X))$ holds for all $X\in \As$. Denote $C=S-T$ and write $X-T$ in the place of $X$. We have $\tau(\exp(X))\leq \tau(\exp(C+X))$, $X\in \As$. Let $D=\exp (C)$. Then for every $E\in \App$ which commutes with $D$ we have that $\log E$ commutes with $C$ and, choosing $X=\log E$, we obtain $\tau(E)\leq \tau (DE)$. An argument similar to what was applied in the sufficiency part of the proof of Lemma \ref{L:1} gives us that $I\leq D$. This implies $C\geq 0$, i.e., $T\leq S$.

The remaining part of the lemma is just obvious.
\end{proof}

We next collect some useful properties of Jordan *-isomorphisms that we will use in what follows. First, any Jordan *-isomorphism $J:\A\to \Am$ satisfies
\begin{equation*}\label{E:J1}
J(XYX)=J(X)J(Y)J(X), \quad X,Y\in \A,
\end{equation*}
and hence
\begin{equation}\label{E:J2}
J(X^n)=J(X)^n, \quad X\in \A
\end{equation}
holds for every nonnegative integer $n$, see 6.3.2 Lemma in \cite{Pal}. In particular, $J$ is unital meaning that $J$ sends the identity to the identity. Since $J$ is clearly positive (in fact, it preserves the order between self-adjoint elements in both directions), it is bounded. Indeed, more is true: $J$ is an isometry with respect to the $C^*$-norm. 
By Proposition 1.3 in \cite{Sou}, $J$ preserves invertibility,  namely we have
\begin{equation*}\label{E:J3}
J(X^{-1})=J(X)^{-1}
\end{equation*}
for every invertible element $X\in \A$.
It follows that $J$ preserves the spectrum and, using continuous function calculus, from \eqref{E:J2} we deduce that 
\begin{equation*}\label{E:J4}
J(f(X))=f(J(X))
\end{equation*}
holds for any self-adjoint element $X\in \As$ and continuous real function $f$ on the spectrum of $X$. 

We continue the preparations and recall the definition of the Thompson metric (or Thompson part metric). In fact, it can be defined in a rather general setting involving normed linear spaces and certain closed cones, see \cite{Tho63}. In the case of a $C^*$-algebra $\A$, that general definition of the Thompson metric $d_T$ on the positive definite cone $\App$ reads as follows
\begin{equation}\label{E:Tho}
d_T (A, B) = \log \max \{M (A/B),M (B/A)\}, \quad A,B\in \App,
\end{equation}
where $M(X/Y)= \inf\{ t>0 \, : \, X\leq t Y\}$ for any $X,Y\in \App$. It is easy to see that $d_T$ can also be rewritten as
\[
d_T (A,B)=\left\| \log \left(A^{-\fel} B A^{-\fel}\right) \right\|, \quad A,B\in \App.
\]
Here and in what follows $\|.\|$ denotes the $C^*$-norm on $\A$.

The structure of surjective Thompson isometries is known and it was described in our paper \cite{ML14a}. By Theorem 9 in \cite{ML14a}, we have that for given $C^*$-algebras $\A,\Am$ and surjective Thompson isometry $\phi:\App\to \Ampp$, 
there are a central projection $P$ in $\Am$ and a Jordan *-isomorphism $J:\A\to \Am$  such that $\phi$ is of the form
\begin{equation}\label{E:36}
\phi(A) = \phi(I)^{1/2}\left(PJ(A)+(I-P)J(A^{-1})\right)\phi(I)^{1/2}
, \quad A\in \App.
\end{equation} 
(We remark that the converse statement is also true, any map between the positive definite cones $\App,\Ampp$ of the form \eqref{E:36} is necessarily a surjective Thompson isometry.) The crucial observation we make below concerns positive homogeneous order isomorphisms. If $\phi:\App \to \Bpp$ is a bijective map such that for any $A,B\in \App$ we have $A\leq B$ if and only if $\phi(A)\leq \phi(B)$, then $\phi$ is called an order isomorphism. Moreover, we say that $\phi$ is positive homogeneous if $\phi(tA)=t\phi(A)$ holds for all $A\in \App$ and real number $t>0$.

Theorem 9 in \cite{ML14a} immediately gives us the following.

\begin{theorem}\label{T:cruc}
Let $\A,\B$ be $C^*$-algebras. The map $\phi:\A \to \B$ is a positive homogeneous order isomorphism if and only if it is of the form
\[
\phi(A)=CJ(A)C, \quad A\in \App,
\]
where $C\in \Bpp$ and $J:\A\to \B$ is a Jordan *-isomorphism.
\end{theorem}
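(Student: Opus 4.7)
The plan is to reduce this statement to the already recalled structure theorem for surjective Thompson isometries, equation \eqref{E:36}, by first showing that every positive homogeneous order isomorphism is automatically a Thompson isometry, and then using homogeneity a second time to eliminate the inverse branch in \eqref{E:36}.

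First I would check the Thompson isometry property. Given such a $\phi:\App\to\Bpp$, for all $A,B\in\App$ and $t>0$ the chain $A\le tB \Leftrightarrow \phi(A)\le \phi(tB)=t\phi(B)$ yields $M(A/B)=M(\phi(A)/\phi(B))$, and similarly with $A,B$ swapped, so by the defining formula \eqref{E:Tho} we get $d_T(\phi(A),\phi(B))=d_T(A,B)$. Surjectivity is part of being an order isomorphism, hence \eqref{E:36} applies: there are a central projection $P\in\B$ and a Jordan *-isomorphism $J:\A\to\B$ with
\[
\phi(A)=\phi(I)^{1/2}\bigl(PJ(A)+(I-P)J(A^{-1})\bigr)\phi(I)^{1/2}, \quad A\in \App.
\]

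To pin down $P$, I would compute $\phi(tA)$ for a fixed $t>0$ with $t\neq 1$ in two ways. Positive homogeneity gives $\phi(tA)=t\phi(A)$, while the linearity of $J$ together with $J(X^{-1})=J(X)^{-1}$ yields
\[
\phi(tA)=\phi(I)^{1/2}\bigl(tPJ(A)+t^{-1}(I-P)J(A^{-1})\bigr)\phi(I)^{1/2}.
\]
Equating the two expressions and canceling the invertible outer factors leaves $(t-t^{-1})(I-P)J(A^{-1})=0$ for every $A\in\App$. Since $J$ is a bijection onto $\Bpp$ and $t-t^{-1}\neq 0$, this forces $I-P=0$. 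Setting $C=\phi(I)^{1/2}\in\Bpp$ then gives $\phi(A)=CJ(A)C$.

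The converse direction is routine: any such $\phi$ is positive homogeneous by linearity of $J$, bijective as the composition of the bijection $J$ with conjugation by the invertible $C$, and an order isomorphism in both directions because Jordan *-isomorphisms and conjugation by a positive invertible element both preserve and reflect the order between self-adjoint elements. There is essentially no obstacle beyond invoking \eqref{E:36} from \cite{ML14a}; the only independent content is the one-line cancellation that forces $P=I$.
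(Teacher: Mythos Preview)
Your proposal is correct and is essentially the argument the paper has in mind. The paper does not give an explicit proof of this theorem; it simply says that it follows ``immediately'' from Theorem~9 in \cite{ML14a}, i.e., from the structural formula \eqref{E:36}. Your write-up is exactly the natural way to make that ``immediately'' precise: first use the definition \eqref{E:Tho} of $d_T$ together with positive homogeneity to see that $M(A/B)=M(\phi(A)/\phi(B))$ and hence that $\phi$ is a surjective Thompson isometry; then plug $tA$ into \eqref{E:36} and compare with $t\phi(A)$ to force $(I-P)J(A^{-1})=0$, whence $P=I$ and $C=\phi(I)^{1/2}$ works. The paper uses the same $M(A/B)$ computation later (in the proof of Theorem~\ref{T:pot}), so there is no methodological difference; you have just spelled out the two short steps the paper leaves implicit.
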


Beside surjective Thompson isometries we will also need to consider surjective  dilations (homotheties) with respect to the Thompson metric.
In the proof of the corresponding result Theorem \ref{T:homo} and also in the proof of Theorem \ref{T:pot}, we will use a general Mazur-Ulam type result of ours, see Theorem 3 in \cite{MolGMU}. For the sake of completeness, below we formulate that general result but in a somewhat weaker form which is however just appropriate for our present aims.

First we need a concept. Let $X$ be a set equipped with a binary operation $\diamond$ which satisfies the following conditions:
\begin{itemize}
\item[(a1)]
$a\diamond a=a$ holds for every $a\in X$;
\item[(a2)]
$a\diamond (a\diamond b)=b$ holds for any $a,b\in X$;
\item[(a3)]
the equation $x\diamond a=b$ has a unique solution $x\in X$ for any given $a,b\in X$.
\end{itemize}
Then we call the pair $(X,\diamond)$ a point-reflection geometry.
A trivial example for such a structure is any linear space equipped with the operation $a\diamond b =2a-b$. A quite nontrivial example is when we consider the positive definite cone $\App$ of a $C^*$-algebra $\A$ equipped with the operation $a\diamond b =ab^{-1}a$, $a,b\in \App$ (see \cite{MolGMU}, the discussion after Definition 1). Now, the general Mazur-Ulam theorem we need reads as follows.
 
\begin{theorem}[cf. Theorem 3 in \cite{MolGMU}]\label{T:GMU}
Let $(X, \diamond)$, $(Y, \star)$ be point-reflection geometries equipped with metrics $d,\rho$, respectively, such that
\begin{itemize}
\item[(b1)]
$d(a\diamond x, a\diamond y)=d(x,y)$ holds for all $a,x,y\in X$ and, similarly,
\item[(b1')]
$\rho(a'\star x', a'\star y')=\rho(x',y')$ is valid for all $a',x',y'\in Y$;
\item[(b2)]
there exists a constant $K>1$ such that $d(x,y\diamond x)\geq K d(x, y)$ holds for every $x,y\in X$.
\end{itemize}
If $\phi:X\to Y$ is a surjective isometry, i.e.,
a surjective map which satisfies
\[
\rho(\phi(x),\phi(y))=d(x,y), \quad x,y\in X, 
\]
then we have that $\phi$ is an isomorphism in the sense that
\[
\phi(x\diamond y)=\phi(x) \star \phi(y), \quad x,y\in X.
\]
\end{theorem}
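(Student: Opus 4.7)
The plan is to prove the theorem by a Mazur--Ulam type argument, showing that $\phi$ intertwines each point-reflection $\sigma_a \colon x \mapsto a \diamond x$ with its counterpart $\sigma'_{\phi(a)} \colon x' \mapsto \phi(a) \star x'$ in $Y$. Conditions (a1) and (a2) make $\sigma_a$ an involution with $\sigma_a(a)=a$, while (b1) makes it an isometry; condition (b2) upgrades $a$ to the \emph{unique} fixed point of $\sigma_a$, since $\sigma_a(x)=x$ combined with (b2) gives $0=d(x,a\diamond x)\geq K\,d(x,a)$, forcing $x=a$. The same remark applies in $(Y,\star,\rho)$ to each $\sigma'_{a'}$.

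The main technical step will be the following fixed-point lemma. Given $x,y\in X$ and the unique $m\in X$ (by (a3)) with $m\diamond x = y$ -- so that also $\sigma_m(y)=x$ by (a2) -- every surjective isometry $g$ of $X$ with $g(x)=x$ and $g(y)=y$ must satisfy $g(m)=m$. I would let $\mathcal G$ denote the set of such $g$ and set $\lambda(g)=d(g(m),m)$. The triangle inequality and $g(x)=x$ give $\lambda(g)\leq 2\,d(m,x)$, so $\Lambda:=\sup_{g\in\mathcal G}\lambda(g)$ is finite. For any $g\in\mathcal G$ the composition $g':=\sigma_m\circ g^{-1}\circ \sigma_m\circ g$ again lies in $\mathcal G$, a direct check using $\sigma_m(x)=y$ and $\sigma_m(y)=x$. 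A short computation exploiting (b1) for $\sigma_m$ and $g$, together with $\sigma_m(m)=m$, reduces $\lambda(g')$ to $d(\sigma_m(g(m)),g(m))$; property (b2) then yields $\lambda(g')\geq K\,d(g(m),m)=K\lambda(g)$. Taking the supremum we obtain $\Lambda\geq K\Lambda$, and since $K>1$ this forces $\Lambda=0$.

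To conclude, fix $a,b\in X$ and set $c:=a\diamond b$, so that $\sigma_a$ swaps $b$ and $c$ (by (a2)); in particular $a$ plays the role of $m$ for the pair $(b,c)$. By (a3) in $Y$ there is a unique $n\in Y$ with $n\star\phi(b)=\phi(c)$, and it suffices to show $n=\phi(a)$, because then $\phi(a)\star\phi(b)=\phi(c)=\phi(a\diamond b)$. Consider the surjective isometry $g:=\sigma_a\circ \phi^{-1}\circ \sigma'_n\circ \phi$ of $X$. Using the defining property of $n$, the involutivity of $\sigma'_n$, and $\sigma_a(b)=c,\,\sigma_a(c)=b$, a short calculation shows $g(b)=b$ and $g(c)=c$. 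The fixed-point lemma then gives $g(a)=a$, which unwinds to $\sigma'_n(\phi(a))=\phi(a)$. By the uniqueness of fixed points from the first paragraph, $\phi(a)=n$, completing the proof.

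The step I expect to require the most care is the fixed-point lemma, specifically the contraction-type estimate $\lambda(g')\geq K\lambda(g)$: one must be meticulous about which isometry one uses to shift points inside $d(\,\cdot\,,\,\cdot\,)$ so that (b1) leaves the expression untouched while (b2) can be invoked on the right pair of points. Everything else -- verifying $g'\in\mathcal G$ and $g(b)=b,\,g(c)=c$ -- is routine bookkeeping with the axioms.
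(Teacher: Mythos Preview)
The paper does not prove this theorem; it is quoted from \cite{MolGMU} without proof, so there is no in-paper argument to compare against. Your overall strategy---the Vogt/Mazur--Ulam bounded-displacement trick applied to the reflection $\sigma_m$---is the standard one and your fixed-point lemma is correctly set up and proved.

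There is, however, a small but genuine slip in the final step. You write in the first paragraph that ``the same remark applies in $(Y,\star,\rho)$,'' i.e., that each $\sigma'_{a'}$ has $a'$ as its \emph{unique} fixed point, and you invoke this at the end to pass from $\sigma'_n(\phi(a))=\phi(a)$ to $n=\phi(a)$. But hypothesis (b2) is stated only for $(X,\diamond,d)$, not for $(Y,\star,\rho)$, so you cannot conclude uniqueness of fixed points of $\sigma'_n$ that way. Fortunately the conclusion is still correct and the fix is immediate: from $n\star\phi(a)=\phi(a)$ and (a1) giving $\phi(a)\star\phi(a)=\phi(a)$, both $n$ and $\phi(a)$ solve the equation $x\star\phi(a)=\phi(a)$; axiom (a3) in $Y$ then forces $n=\phi(a)$. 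So replace the appeal to (b2) in $Y$ by this direct use of (a1) and (a3), and drop the unjustified sentence about $Y$ from the first paragraph.
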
  

The following interesting result says that the existence of a non-isometric surjective dilation (homothety) between the positive definite cones of $C^*$-algebras with respect to the Thompson metric implies that the underlying algebras are necessarily commutative. More precisely, we have the following statement.

\begin{theorem}\label{T:homo}
If $\A$ and $\Am$ are $C^*$-algebras and there is a surjective map $\phi:\App \to \Ampp$ such that 
\begin{equation}\label{E:70}
d_T(\phi(A),\phi(B))=\gamma d_T(A,B), \quad A,B\in \App
\end{equation}
holds with some positive real number $\gamma$ different from 1, then the algebras $\A,\Am$ are necessarily commutative.
\end{theorem}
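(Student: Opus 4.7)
The plan is to combine the generalized Mazur--Ulam result (Theorem~\ref{T:GMU}) with the classical Mazur--Ulam theorem to force $\phi$ into a form analogous to Theorem~9 of \cite{ML14a} but with a $\gamma$-power twist, and then rule out that form for non-commutative algebras by the L\"owner--Heinz inequality.

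First I would verify the hypotheses of Theorem~\ref{T:GMU} for $(\App,\diamond)$ and $(\Ampp,\diamond)$ with $A\diamond B=AB^{-1}A$ and the Thompson metric (resp.\ its $\gamma^{-1}$-rescaling on $\Ampp$): condition (b1) holds because $X\mapsto AX^{-1}A$ is the composition of the Thompson isometries $X\mapsto X^{-1}$ and $X\mapsto AXA$, and (b2) holds with $K=2$ via the identity $d_T(X,Y\diamond X)=\|\log((X^{-1/2}YX^{-1/2})^{2})\|=2d_T(X,Y)$. Viewing $\phi$ as an isometry for the rescaled metric, Theorem~\ref{T:GMU} then gives $\phi(A\diamond B)=\phi(A)\diamond\phi(B)$. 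Replacing $\phi$ by $\psi(A)=\phi(I)^{-1/2}\phi(A)\phi(I)^{-1/2}$ normalizes to $\psi(I)=I$ while retaining both $\diamond$-preservation and the $\gamma$-dilation property.

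From $A^{2}=A\diamond I$ and $A^{-1}=I\diamond A$, iteration gives $\psi(A^{n})=\psi(A)^{n}$ for every integer $n$; the identity $\psi(A^{1/2})=\psi(A)^{1/2}$ plus continuity then extends this to $\psi(A^{q})=\psi(A)^{q}$ for every real $q$. Passing to one-parameter groups, $\psi(\exp((2s-t)X))=\psi(\exp sX)\diamond\psi(\exp tX)$ and $\psi(\exp 0)=I$ force $\psi(\exp tX)=\exp(tT(X))$ for a continuous $\mathbb{R}$-homogeneous map $T\colon\As\to\Ams$ with $\|T(X)\|=\gamma\|X\|$. A symmetric Baker--Campbell--Hausdorff expansion shows that $\log(\exp(-X/2)\exp Y\exp(-X/2))=-X+Y+O(\|X\|^{3}+\|Y\|^{3})$, since the first-order commutator terms cancel by symmetry; inserting $X=\epsilon X_{0},\,Y=\epsilon Y_{0}$ into the dilation identity $d_T(\psi(\exp X),\psi(\exp Y))=\gamma d_T(\exp X,\exp Y)$ and letting $\epsilon\to 0$ yields $\|T(X_{0})-T(Y_{0})\|=\gamma\|X_{0}-Y_{0}\|$ for all $X_{0},Y_{0}\in\As$. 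Hence $T/\gamma$ is a surjective isometry of Banach spaces sending $0$ to $0$, so by the classical Mazur--Ulam theorem it is $\mathbb{R}$-linear.

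Applying Kadison's theorem to the complex-linear extension then yields $T(X)=\gamma cJ(X)$ with $c\in\Am$ a central self-adjoint unitary and $J\colon\A\to\Am$ a Jordan *-isomorphism. Writing $c=P_{+}-P_{-}$ for the central spectral projections, a direct computation using that $P_\pm$ are central gives
\[
\psi(A)=\exp(T(\log A))=P_{+}J(A^{\gamma})+P_{-}J(A^{-\gamma}),\quad A\in\App,
\]
the power-twisted analogue of the structure in \cite[Theorem~9]{ML14a}. Since $J$ is a Thompson isometry and $X\mapsto X^{-1}$ is also isometric, the dilation identity reduces in each central block to $d_T(A^{\gamma},B^{\gamma})=\gamma d_T(A,B)$ for all $A,B\in\App$. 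For $0<\gamma\leq 1$, the L\"owner--Heinz inequality applied to the defining two-sided estimate $e^{-d_T(A,B)}B\leq A\leq e^{d_T(A,B)}B$ gives $d_T(A^{\gamma},B^{\gamma})\leq\gamma d_T(A,B)$ with equality characterizing commuting pairs; the case $\gamma>1$ reduces to $1/\gamma<1$ by substituting $A^{1/\gamma},B^{1/\gamma}$ for $A,B$. Either way, $\gamma\neq 1$ forces every two elements of $\App$ to commute, so $\A$ is commutative, and $\Am$ follows by bijectivity of $\psi$. The main obstacle I anticipate is controlling the Baker--Campbell--Hausdorff error terms uniformly in the infinitesimal scaling so that the limit produces the exact equality $\|T(X_{0})-T(Y_{0})\|=\gamma\|X_{0}-Y_{0}\|$ and not merely an estimate.
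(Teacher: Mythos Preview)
Your argument tracks the paper's proof almost step for step: normalize to $\psi(I)=I$, apply the generalized Mazur--Ulam theorem to get $\psi(AB^{-1}A)=\psi(A)\psi(B)^{-1}\psi(A)$, deduce $\psi(A^{t})=\psi(A)^{t}$, pass to $\As$ via $\log/\exp$, show the induced map is a surjective $\gamma$-scaled isometry, apply classical Mazur--Ulam plus Kadison to get $T(X)=\gamma CJ(X)$ with $C$ a central symmetry, and reduce everything to the identity $d_T(A^{\gamma},B^{\gamma})=\gamma\,d_T(A,B)$ on $\App$. Your BCH computation is exactly the content of the limit formula $d_T(\exp(tS),\exp(tR))/t\to\|S-R\|$ that the paper quotes from \cite[p.~166]{ML14a}; citing that formula dissolves the error-control worry you flag at the end.

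The one substantive divergence is the last step. From $d_T(A^{\gamma},B^{\gamma})=\gamma\,d_T(A,B)$ the paper does \emph{not} argue via an equality case of L\"owner--Heinz. Instead it rewrites the identity as $\|\log(A^{\gamma}B^{\gamma}A^{\gamma})\|=\|\log(ABA)^{\gamma}\|$, specializes to $A,B\geq I$ to get $\|A^{\gamma}B^{\gamma}A^{\gamma}\|=\|ABA\|^{\gamma}$, and then uses the elementary norm characterization of order ($E\leq F$ iff $\|XEX\|\leq\|XFX\|$ for all $X$) to conclude that $B\mapsto B^{\gamma}$ is an order automorphism of $\App$; commutativity then follows from the Nagisa--Ueda--Wada theorem \cite[Theorem~2]{NUW}, since for $\gamma\neq 1$ one of $t\mapsto t^{\gamma}$ or $t\mapsto t^{1/\gamma}$ is a nonconcave increasing function that has just been shown to be operator monotone. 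Your alternative---that equality in $d_T(A^{\gamma},B^{\gamma})\leq\gamma\,d_T(A,B)$ forces $AB=BA$ for each individual pair---is plausible but is not a standard statement of L\"owner--Heinz and you give no argument for it; as written this is a gap. Either supply a proof of that pairwise equality characterization, or replace your final paragraph with the paper's route through the norm identity and \cite{NUW}.
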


Note that the first part of the proof goes along the lines of the proof of Theorem 9 in \cite{ML14a}.

\begin{proof}
Assume that we have a surjective map $\phi$ satisfying \eqref{E:70}. Clearly, all maps of the form $B\mapsto TBT^*$, with any invertible $T\in \B$ are Thompson isometries of $\Bpp$. Therefore, considering the transformation $\phi(I)^{\mfel}\phi(.)\phi(I)^{\mfel}$, we can and do assume that $\phi$ is unital, it sends the unit to the unit. 

We can apply our general Mazur-Ulam theorem, Theorem \ref{T:GMU}, for the pair $d_T,\gamma d_T$ of metrics on the point-reflection geometries $\App, \Bpp$, respectively. (To see that all the conditions in Theorem \ref{T:GMU} are satisfied, we refer the reader to the proof of Proposition 13 in \cite{MolGMU}.)  We infer that 
\begin{equation*}\label{E:32}
\phi(AB^{-1}A)=\phi(A)\phi(B)^{-1}\phi(A), \quad A,B\in \App.
\end{equation*}
Since $\phi$ sends the identity to the identity, it follows that $\phi(B^{-1})=\phi(B)^{-1}$, $B\in \App$ and thus $\phi$ fulfills
\begin{equation}\label{E:P1}
\phi(ABA)=\phi(A)\phi(B)\phi(A), \quad A,B\in \App.
\end{equation}
The topology of the Thompson metric coincides with the topology of the $C^*$-norm on $\App$ (see Proposition 13 in \cite{MolGMU} for a more general statement). Since $\phi$ is trivially continuous with respect to the Thompson metric, it is continuous with respect to the $C^*$-norm. It follows that we have
$\phi(A^t)=(\phi(A))^t$ for any $A\in \App$ and real number $t$. In fact, using \eqref{E:P1}, one can first prove this identity for integers, next for rationals and finally, using continuity, for all reals. Define $F(S)=\log \phi(\exp(S))$, $S\in \As$. We know from \cite{ML14a} (see p. 166 there) that the formula 
\begin{equation*}
\frac{d_T(\exp({tS}),\exp({tR}))}{t} \to \|S-R\| \quad \text{ as } t\to 0
\end{equation*}
holds for all $S,R\in \As$. Since
\[
\begin{gathered}
d_T(\exp(tF(S)),\exp(tF(R)))=
d_T((\phi(\exp(S)))^t,(\phi(\exp(R)))^t)=\\
d_T(\phi(\exp(tS)),\phi(\exp(tR)))=
\gamma d_T(\exp(tS),\exp(tR)), \quad t>0,
\end{gathered}
\]
hence we deduce
\begin{equation*}
\|F(S)-F(R)\|=\gamma \|S-R\|, \quad S,R\in \As.
\end{equation*}
We know that $\phi(I)=I$ and it implies that $F(0)=0$. We have that $(1/\gamma)F$ is a surjective isometry between the normed real linear spaces $\As$ and $\Ams$. The classical Mazur-Ulam theorem asserts that any surjective isometry between normed real linear spaces is affine and hence it is a surjective linear isometry followed by a translation. Therefore, we obtain that $(1/\gamma)F$ is a surjective linear isometry. The structure of such maps between the self-adjoint parts of $C^*$-algebras was described by Kadison, see Theorem 2 in \cite{Kad}. That result says that $F$ is necessarily of the form
\begin{equation*}
F(S)=\gamma CJ(S), \quad S\in \As,
\end{equation*}
where $C\in \Ams$ is a central symmetry (central self-adjoint unitary) and $J:\A \to \B$ is a Jordan *-isomorphism. Concerning $\phi$, this means that
\begin{equation*}
\phi(A)=\exp\ler{\gamma CJ(\log A)}, \quad A\in \App.
\end{equation*}

Since Jordan *-isomorphisms (as well as their inverses) send commuting elements to commuting elements  (see, e.g., 6.3.4 Theorem in \cite{Pal}), it follows that there is a central symmetry $D\in \A$ such that $J(D)=C$ and hence we easily have
\[
\phi(A)=J(\exp(\gamma D\log A)), \quad A\in \App.
\]
Since Jordan *-isomorphisms, when restricted to positive definite cones, are clearly Thompson isometries, hence, by \eqref{E:70} we have
\[
d_T(\exp(\gamma D\log A),\exp(\gamma D\log B))=\gamma d_T(A,B), \quad A,B\in \App.
\]
It is not difficult to check that with the central symmetry $D$, the transformation $A\mapsto  \exp(D\log A)$ is also a Thomson isometry. Therefore, from the above displayed formula we infer
\[
d_T(A^{\gamma},B^{\gamma})=\gamma d_T(A,B), \quad A,B\in \App.
\]
This clearly implies the validity of the following identity
\[
\|\log (A^{\gamma}B^{\gamma}A^{\gamma})\|=\|\log (ABA)^{\gamma}\|, \quad A,B\in \App.
\]
Choosing elements $A,B\in \App$ such that $A,B\geq I$, we have
\[
\log \|A^{\gamma}B^{\gamma}A^{\gamma}\|=
\|\log (A^{\gamma}B^{\gamma}A^{\gamma})\|=\|\log (ABA)^{\gamma}\|=\log \|(ABA)^{\gamma}\|, \quad A,B\in \App.
\]
Therefore, for such $A,B\in \App$, it follows that 
\begin{equation}\label{E:71}
\|A^{\gamma}B^{\gamma}A^{\gamma}\| = \|ABA\|^{\gamma}.
\end{equation}
Obviously, multiplying any $A,B\in \App$ by positive scalars, we obtain the above equality for all $A,B\in \App$, too.
Next observe that for any $E,F\in \App$ the following holds: $E\leq F$ if and only if $\|XEX\|\leq \|XFX\|$ is valid for all $X\in \App$. Indeed, only the sufficiency needs proof. Choose $X=F^{\mfel}$. Then we have  $\|F^{\mfel}EF^{\mfel}\|\leq 1$, implying $F^{\mfel}EF^{\mfel}\leq I$ which gives $E\leq F$. Consequently, from \eqref{E:71} we can deduce that the map $B\mapsto B^\gamma$ is an order automorphism of $\App$.

Theorem 2 in \cite{NUW} states (among others) that if a nonconcave continuous increasing numerical function is operator monotone on $\App$, then $\A$ is necessarily commutative. Applying this result we obtain that $\A$ is commutative and since Jordan *-isomorphisms preserve commutativity, it follows that $\B$ is also commutative. The proof is complete. 
\end{proof}

The following is a crucial result in which we extend the statement of Theorem \ref{T:cruc}. 
Here we describe the forms of positive homogeneous surjective maps between positive definite cones which respect  certain pairs of order relations.

\begin{theorem}\label{T:pot}
Let $f,g$ each be either the logarithm function or a power function with a positive exponent defined on the positive real line. Let $\A,\B$ be $C^*$-algebras, $\phi:\App\to \Bpp$ be a surjective positive homogeneous map such that for any $A,B\in \App$ we have $f(A)\leq f(B)$ if and only if $g(\phi(A))\leq g(\phi(A))$.
We can describe the structure of $\phi$ as follows.
\begin{itemize}
\item[(c1)] If $f,g$ are both power functions and $f=g$, then $\phi$ is of the form
\begin{equation*}
\phi(A)= f^{-1}(CJ(f(A))C), \quad A\in \App,
\end{equation*}
where $C\in \Bpp$ and $J:\A\to \B$ is a Jordan *-isomorphism. 
\item[(c2)] If $f,g$ are both power functions and $f\neq g$, then the algebras $\A,\B$ are necessarily commutative and $\phi$ is of the form
\begin{equation*}
\phi(A)= CJ(A), \quad A\in \App,
\end{equation*}
where $C\in \Bpp$ and $J:\A\to \B$ is an algebra *-isomorphism.
\item[(c3)] If $f=g=\log$, then $\phi$ is of the form
\begin{equation*}
\phi(A)=\exp(J(\log A)+X_0), \quad A\in \App,
\end{equation*}
where $J:\A\to \B$ is a Jordan *-isomorphism and $X_0\in \Ams$. 
\item[(c4)] If $f$ is a power function and $g$ is the logarithmic function, then
$\A, \B$ are necessarily commutative and $\phi$ is of the form
\begin{equation*}
\phi(A)=CJ(A), \quad A\in \App
\end{equation*}
where $C\in \Bpp$ and $J:\A\to \B$ is an algebra *-isomorphism.
\end{itemize}
\end{theorem}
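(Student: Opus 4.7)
The plan is to associate to $\phi$ the induced bijection $\psi=g\circ\phi\circ f^{-1}$, which maps $f(\App)$ onto $g(\Bpp)$; the image is $\App$ when the outer function is a power and $\As$ (respectively $\Ams$) when it is $\log$. By hypothesis $\psi$ is an order isomorphism, and positive homogeneity of $\phi$ yields a clean law for $\psi$: it is $(q/p)$-homogeneous in (c1) and (c2) with $f(t)=t^p$, $g(t)=t^q$; it satisfies $\psi(X+\lambda I)=\psi(X)+\lambda I$ in (c3); and $\psi(tX)=\psi(X)+p^{-1}(\log t)I$ in (c4).

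Case (c1), where $r=q/p=1$, follows at once from Theorem~\ref{T:cruc} applied to $\psi$. For (c2), where $r\ne 1$, compute from the homogeneity and the order-iso property that $M(\psi(X)/\psi(Y))=M(X/Y)^r$, so $\psi$ is a surjective Thompson dilation by factor $r$; Theorem~\ref{T:homo} then forces commutativity of $\A$ and $\B$, after which the hypothesis reduces to an order isomorphism on $\phi$ itself and Theorem~\ref{T:cruc} supplies $\phi(A)=CJ(A)$ with $J$ an algebra *-isomorphism.

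Cases (c3) and (c4) share a common blueprint: leverage the order--homogeneity interplay to upgrade $\psi$ to a (scaled) metric isometry, then invoke the classical Mazur-Ulam theorem together with Kadison's Theorem~2 in~\cite{Kad}. For (c3), the identity $\|X-Y\|=\inf\{\lambda\ge 0: Y-\lambda I\le X\le Y+\lambda I\}$ on $\As$ transfers verbatim through $\psi$ to give $\|\psi(X)-\psi(Y)\|=\|X-Y\|$; Mazur-Ulam then makes $\psi-\psi(0)$ real-linear, Kadison gives it the form $CJ$ with $C$ a central symmetry, and order preservation pins $C=I$, producing $\phi(A)=\exp(J(\log A)+X_0)$ with $X_0=\psi(0)$. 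For (c4), the analogous infimum $d_T(X,Y)=\inf\{\mu\ge 0: e^{-\mu}Y\le X\le e^{\mu}Y\}$ yields $\|\psi(X)-\psi(Y)\|=p^{-1}d_T(X,Y)$. Normalizing $\psi(I)=0$ by a translation and applying the general Mazur-Ulam Theorem~\ref{T:GMU} to the point reflections $A\diamond B=AB^{-1}A$ on $\App$ and $X\star Y=2X-Y$ on $\Ams$ produces $\psi(AB^{-1}A)=2\psi(A)-\psi(B)$; specializing yields $\psi(B^{-1})=-\psi(B)$ and inductively $\psi(A^r)=r\psi(A)$ for every real $r$. Passing to $F(S)=\psi(\exp S)$ on $\As$, real-homogeneity of $F$ combines with the limit $t^{-1}d_T(\exp tS,\exp tR)\to\|S-R\|$ from~\cite{ML14a} to give $\|F(S)-F(R)\|=p^{-1}\|S-R\|$; Mazur-Ulam and Kadison then deliver $F=p^{-1}CJ$, whence $\log\phi(A)=\log\phi(I)+CJ(\log A)$.

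Substituting this form back into the original hypothesis and decomposing $\A$ along the central projection of $J^{-1}(C)$, each summand reduces to $A\le B\iff\log A\le\log B$ on the $+1$ side, or to its reverse on the complementary side; the latter is impossible on any nontrivial summand by operator monotonicity of $\log$, while the former forces operator monotonicity of $\exp$ and hence commutativity of the relevant summand. Thus $\A$ and $\B$ are necessarily commutative, and Theorem~\ref{T:cruc} supplies the asserted form $\phi(A)=CJ(A)$ with $J$ now an algebra *-isomorphism. The main obstacle I anticipate is (c4): executing the chain from the Mazur-Ulam equation through the exponential so that the scaling factor $p^{-1}$ produces a genuine real-linear isometry of self-adjoint parts, and then cleanly extracting commutativity from the functional form of $\phi$ against the asymmetric (power versus logarithm) order condition.
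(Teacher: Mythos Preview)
Your proposal is correct, and for (c1)--(c3) it tracks the paper's proof closely; the only minor deviation is in (c3), where you eliminate the central symmetry $C$ via the order-preserving property of $\psi$, while the paper instead evaluates $\phi(tI)=t\phi(I)$ to force $C=I$. Both work.

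The genuine divergence is in (c4). After the general Mazur--Ulam step $\psi(AB^{-1}A)=2\psi(A)-\psi(B)$, the paper takes a short external route: it observes that this equation forces $\psi(A\sharp B)=\tfrac12(\psi(A)+\psi(B))$, i.e.\ $\psi$ carries the geometric mean to the arithmetic mean, and then invokes Proposition~7 of \cite{MolJim}, which says such a bijection exists only when $\A$ is commutative; the form of $\phi$ then follows from (c3). Your route is longer but more self-contained: you recycle the machinery from the proof of Theorem~\ref{T:homo} (deriving $\psi(A^r)=r\psi(A)$, passing to $F(S)=\psi(\exp S)$, using the limit formula from \cite{ML14a} to make $pF$ a linear isometry, and applying Kadison) to obtain the explicit form $\log\phi(A)=CJ(\log A)+X_0$, and only then feed this back into the hypothesis $A^p\le B^p\Leftrightarrow\log\phi(A)\le\log\phi(B)$ to extract operator monotonicity of $\exp$, whence commutativity via \cite{NUW}. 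The paper's approach buys brevity at the cost of an extra black-box citation; yours avoids \cite{MolJim} entirely but repeats a fair amount of the Thompson-dilation argument and requires the additional step of ruling out the $-1$ part of the central symmetry and then invoking the $\exp$-monotonicity criterion.
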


\begin{proof}
The injectivity of $\phi$ is obvious. Indeed, from $\phi(A)=\phi(B)$ we obtain $f(A)=f(B)$ which implies $A=B$ for any $A,B\in \App$. 

Assume that both $f,g$ are power functions, $f(t)=t^\gamma$, $g(t)=t^\delta$, $t>0$ holds with some positive real numbers $\gamma,\delta$. Then the bijective map 
$\psi:\App\to \Bpp$ defined by $\psi(A)=\phi(A^{\frac{1}{\gamma}})^\delta$, $A\in \App$ is an order isomorphism meaning that for any $A,B\in \A$ we have $A\leq B$ if and only if $\psi(A)\leq \psi(B)$ holds. Moreover, $\psi(tA)=t^{\frac{\delta}{\gamma}}\psi(A)$ for any $A\in \App$ and positive real number $t$. 
Therefore, we have $A\leq tB$ if and only if $\psi(A)\leq t^{\frac{\delta}{\gamma}}\psi(B)$ and, by the definition of the Thompson metric in \eqref{E:Tho}, it is easy to see that we have
\[
d_T(\psi(A),\psi(B))=\frac{\delta}{\gamma} d_T(A,B), \quad A,B\in \App.
\]
If $\delta=\gamma$, then we obtain that $\psi$ is a surjective Thompson isometry which is positive homogeneous and we can apply Theorem \ref{T:cruc} to deduce that $\psi(A)=CJ(A)C$, $A\in \App$ holds with some $C\in \Bpp$ and Jordan *-isomorphism $J:\A \to \B$. Therefore, we have 
\[
\phi(A)=(CJ(A^\gamma)C)^{\frac{1}{\gamma}}, \quad A\in \App
\]
which proves (c1).
If $\delta\neq \gamma$, then applying Theorem \ref{T:homo}, we obtain that $\A,\B$ are commutative. But then $\phi$ obviously has the property that $A\leq B$ holds if and only if $\phi(A)\leq \phi(B)$. We obtain that $\phi$ is a positive homogeneous Thompson isometry and one can complete the proof of (c2) easily.

Assume next that both $f,g$ are the logarithmic function.
Then the bijection $\phi:\App\to \Bpp$ has the following property: for any $A,B\in \App$ we have $\log A\leq \log B$ if and only if $\log \phi(A)\leq \log \phi(B)$.  Define the bijective map $\psi: T\mapsto \log\phi(\exp(T))$ on $\As$.
It is apparent that $\psi$ is an order isomorphism between the spaces $\As$ and $\Ams$. Moreover, because of the homogeneity of $\phi$, we calculate as follows
\begin{equation*}
\psi(T+tI)=\log \phi(e^t\exp(T))=\log e^t \phi(\exp(T))=
tI+\psi(T), \quad T\in \As, t\in \R.
\end{equation*}
Consequently, for any $T,S\in \As$ and real number $t$, the next equivalences hold true
\begin{equation*}
T-S\leq tI\Leftrightarrow T\leq S+tI \Leftrightarrow
\psi(T)\leq\psi(S+tI)=\psi(S)+tI\Leftrightarrow \psi(T)-\psi(S)\leq tI.
\end{equation*}
It is apparent that for any element $X\in \As$, we have the following formula for its norm:
\begin{equation}\label{E:25}
\| X\|=\max\{ \min\{t\in \R \, :\, X\leq tI\}, \min\{t\in \R \, :\, -X\leq tI\} \}.
\end{equation}
Using this, we obtain that $\psi$ satisfies 
\[
\|\psi(T)-\psi(S)\|=\| T-S\|, \quad T,S\in \As,
\]
i.e., $\psi$
is a surjective isometry between the normed real linear spaces $\As$ and $\Ams$. 
Applying the classical Mazur-Ulam theorem, we obtain that $\psi$ is a surjective linear isometry followed by a translation. 
Using Kadison's result Theorem 2 in \cite{Kad}  again, we have a central symmetry $C\in \B$, a Jordan *-isomorphism $J:\A\to \B$ and an element $X_0$ in $\Ams$ such that
$\log(\phi(\exp(T)))=\psi(T)=CJ(T)+X_0$ holds for all $T\in \As$. It follows that $\phi$ is necessarily of the form
\begin{equation}\label{E:23}
\phi(A)=\exp(CJ(\log A)+X_0), \quad A\in \App.
\end{equation}
Choosing $A=tI$ for any $t>0$ and using \eqref{E:23}, from the identity $\phi(tI)=t\phi(I)$ we deduce that
$\exp((\log t)C+X_0)=\exp((\log t)I+X_0)$. Clearly, this gives us that $C$ is the identity. Therefore, we have
\begin{equation*}
\phi(A)=\exp(J(\log A)+X_0), \quad A\in \App
\end{equation*}
which proves (c3).

Assume finally that $f$ is the power function with exponent $\gamma>0$ and $g$ is the logarithmic function.
For any $A,B\in \App$, the inequality $A^{\gamma}\leq B^{\gamma}$ holds if and only if $\log \phi(A)\leq \log \phi(B)$ is valid. Therefore,
the transformation $\psi: A\mapsto \log \phi(A^{\frac{1}{\gamma}})$ is an order isomorphism between $\App$ and $\Ams$.
We have that
\begin{equation*}
A\leq tB \Leftrightarrow \log \phi(A^{\frac{1}{\gamma}})\leq \log \phi((tB)^{\frac{1}{\gamma}})=
\log {t}^{\frac{1}{\gamma}}\phi(B^{\frac{1}{\gamma}})
\Leftrightarrow
\psi(A)\leq \ler{\frac{1}{\gamma} \log t}I +\psi(B)
\end{equation*}
for all $A,B\in \App$ and positive real number $t$.
By the definition of the Thompson metric in \eqref{E:Tho} and \eqref{E:25}, we obtain that
\begin{equation*}
\|\psi(A)-\psi(B)\|=\frac{1}{\gamma}d_T(A,B), \quad A,B\in \App.
\end{equation*}
The generalized Mazur-Ulam theorem above can be applied for the metric $\frac{1}{\gamma}d_T(.,.)$ on the point-reflection geometry $\App$ and the metric of the $C^*$-norm $\|.\|$ on the point-reflection geometry $\Ams$ (we consider the operations $A\diamond B=AB^{-1}A$ on $\App$ and $A\star B=2A-B$ on $\Ams$.) From Theorem \ref{T:GMU} we obtain that $\psi$ is an isomorphism between point-reflection geometries, namely, it satisfies
\begin{equation}\label{E:30}
\psi(AB^{-1}A)=2\psi(A)-\psi(B), \quad A,B\in \App.
\end{equation}
It is well-known (sometimes it is called Anderson-Trapp theorem) that the geometric mean  
\[
A \sharp B=A^{\fel}(A^{-\fel}B A^{-\fel})^{\fel}A^{\fel}
\]
is the unique solution $X\in \App$ of the equation $XA^{-1}X=B$ for any given $A,B\in \App$. 
Similarly, the arithmetic mean $(A+B)/2$ is the unique solution $X\in \Ams$ of the equation $2X-A=B$ for any given $A,B\in \Ams$. From \eqref{E:30} we can now conclude that 
\begin{equation*}
\psi(A\sharp B)=\frac{\psi(A)+\psi(B)}{2},\quad A,B\in \App.
\end{equation*}
Indeed, choosing $X=A\sharp B$, we have
\[
\psi(B)=\psi(XA^{-1}X)=2\psi(X)-\psi(A)
\]
which implies
\[
\psi(A\sharp B)=\psi(X)=\frac{\psi(A)+\psi(B)}{2},\quad A,B\in \App.
\]
It means that the bijective map $\psi$ transforms the geometric mean on $\App$ to the arithmetic mean on $\Ams$. Proposition 7 in \cite{MolJim} tells that this can happen only when $\A$ is commutative. But in that case, for any $A,B\in \App$ we have  $A^{\gamma}\leq B^{\gamma}$ if and only if $\log A\leq \log B$. Now, applying (c3), one can easily complete the proof referring to the already used fact that Jordan *-isomorphisms preserve commutativity in both directions.
\end{proof}

Beside the order related characterizations given in the first part of the section, we will also need some conditions for positive invertible elements of a $C^*$-algebra, the fulfillment of each of which implies that the elements in question are necessarily central. Our first corresponding result reads as follows.

\begin{lemma}\label{L:2}
Let $\A,\B$ be $C^*$-algebras with faithful traces $\tau$ and $\omega$, respectively. Let $J:\A\to \B$ be a Jordan *-isomorphism and $C\in \Ampp$ be such that $\omega(CJ(X))=\tau(X)$ holds for all $X\in \A$. Then $C$ is necessarily a central element in $\B$.
\end{lemma}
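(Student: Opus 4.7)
The plan is to reduce the statement to showing that the inner derivation $\delta_C : \B \to \B$ defined by $\delta_C(U) = CU - UC$ vanishes identically. I will first derive that $\delta_C^2 \equiv 0$ from the hypothesis, and then use the derivation property together with a $C^*$-axiom to upgrade this to $\delta_C \equiv 0$.

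To manufacture a useful algebraic identity I would combine the trace property of $\tau$ with the Jordan identities $J(XYX) = J(X)J(Y)J(X)$ and $J(X^2 Y + YX^2) = J(X)^2 J(Y) + J(Y)J(X)^2$ (both recorded earlier in the paper). Starting from $\tau(X^2 Y) = \tau(XYX) = \tau(YX^2)$ and invoking the hypothesis $\omega(CJ(\cdot)) = \tau(\cdot)$, I expect to obtain
\[
\omega(CU^2 V) + \omega(C V U^2) = 2\,\omega(C U V U)
\]
for all $U, V \in \B$, after writing $U = J(X)$, $V = J(Y)$ and using surjectivity of $J$.

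Next, cycling each term by the trace property of $\omega$ rewrites this as $\omega\bigl((CU^2 + U^2 C - 2UCU)\,V\bigr) = 0$ for every $V \in \B$. Setting $V = W^*$ with $W := CU^2 + U^2 C - 2UCU$, which is self-adjoint because $C = C^*$, and using faithfulness of $\omega$ gives $\omega(WW^*) = 0$ and hence $WW^* = 0$, so that $W = 0$ via the $C^*$-norm identity $\|WW^*\| = \|W\|^2$. In commutator notation this says $[U,[U,C]] = 0$ for every $U \in \B$. A polarization $U \mapsto U+V$ then produces $[U,[V,C]] + [V,[U,C]] = 0$, and specialising $V = C$ yields $[C,[C,U]] = 0$, i.e., $\delta_C^2 \equiv 0$.

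For the final step, because $\delta_C$ is a derivation, iterating the Leibniz rule gives $\delta_C^2(UV) = \delta_C^2(U)\,V + 2\,\delta_C(U)\,\delta_C(V) + U\,\delta_C^2(V)$, so $\delta_C^2 \equiv 0$ forces $\delta_C(U)\delta_C(V) = 0$ for all $U, V \in \B$. Choosing $V = U^*$ and using $[C, U^*] = -[C, U]^*$ (which holds because $C$ is self-adjoint) yields $[C,U][C,U]^* = 0$, so $[C,U] = 0$ for every $U \in \B$ and $C$ is central. The step I expect to require the most care is the faithfulness argument that converts the scalar identity $\omega(WV)=0$ into the algebraic identity $W = 0$; beyond that the argument proceeds mechanically from the Jordan and trace axioms.
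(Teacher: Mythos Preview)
Your argument is correct and genuinely different from the paper's. The paper proceeds by passing to the GNS representation, extending $\omega$ to a faithful normal trace on the weak closure, and invoking St{\o}rmer's structure theorem to split $\pi\circ J$ as a direct sum of a $*$-homomorphism and a $*$-antihomomorphism via central projections; the centrality of $C$ then drops out of a direct computation using those projections. Your route stays entirely inside $\B$, uses only the Jordan identities $J(XYX)=J(X)J(Y)J(X)$ and $J(X^2Y+YX^2)=J(X)^2J(Y)+J(Y)J(X)^2$ together with the trace and faithfulness axioms, and finishes with the classical ``$\delta^2=0\Rightarrow\delta=0$'' trick for inner derivations. This is more elementary and self-contained: it avoids GNS, von Neumann closures, and St{\o}rmer's theorem altogether. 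The paper's approach, on the other hand, makes the structural reason (the homomorphism/antihomomorphism dichotomy) transparent and would adapt more readily if one needed finer information about how $C$ interacts with the two summands.

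One small slip: you assert that $W=CU^2+U^2C-2UCU$ is self-adjoint ``because $C=C^*$'', but that is only true when $U$ is self-adjoint. Fortunately your argument does not need this: from $\omega(WV)=0$ for all $V\in\B$ you may simply take $V=W^*$, note that $WW^*\ge 0$, and invoke faithfulness of $\omega$ to get $WW^*=0$ and hence $W=0$. Everything downstream---the polarization, the specialization $V=C$, and the Leibniz step---is fine.
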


\begin{proof}
We assume that $\omega$ is of norm 1. Then the corresponding GNS construction gives us a *-representation $\pi:\B\to B(H)$ on some Hilbert space $H$ with a cyclic unit vector $\xi \in H$ such that
$\langle \pi(B)\xi,\xi\rangle=\omega(B)$, $B\in \B$. By the faithfulness of $\omega$, the representation $\pi$ is clearly injective, therefore, $\pi$ is an isometry. It follows that $\pi(\B)\subset B(H)$ is a $C^*$-subalgebra. We denote its weak closure by $\mathcal C$. By Proposition 3.19 in \cite{Tak}, there is a faithful normal trace $\nu$ on $\mathcal C$ such that $\nu(\pi(B))=\omega(B)$, $B\in \B$. Let $D=\pi(C)$. Clearly, $D\in \mathcal C$ is positive invertible.
We have
\begin{equation}\label{E:7}
\nu(D\pi(J(X)))=\nu(\pi(CJ(X)))=\omega(CJ(X))=\tau(X), \quad X\in \A.
\end{equation}
Denote $G=\pi\circ J$ which is an (injective) Jordan *-homomorphism from the $C^*$-algebra $\A$ into the von Neumann algebra $\mathcal C$.
By Theorem 3.3 in \cite{Sto}, $G$ is the direct sum of a *-homomorphism and a *-antihomomorphism. Namely, there are orthogonal central projections $P,Q$ in $\mathcal C$ with $P+Q=I$ such that $X\mapsto G(X)P$ is a *-homomorphism and 
$X\mapsto G(X)Q$ is a *-antihomomorphism.
We compute
\begin{equation*}
\begin{gathered}
\nu(DG(XY))=\nu(DG(XY)P+DG(XY)Q)
=\nu(DG(X)G(Y)P+DG(Y)G(X)Q), \quad X,Y\in \A.
\end{gathered}
\end{equation*}
In a similar way, we have
\begin{equation*}
\begin{gathered}
\nu(DG(YX))=
\nu(DG(Y)G(X)P+DG(X)G(Y)Q), \quad X,Y\in \A.
\end{gathered}
\end{equation*}
But, by \eqref{E:7}, $\nu(DG(XY))=\tau(XY)=\tau(YX)=\nu(DG(YX))$, $X,Y\in \A$. Hence, from the last two displayed equalities, we deduce that
\begin{equation*}
\nu(DG(X)G(Y)P-DG(Y)G(X)P)+
\nu(DG(Y)G(X)Q-DG(X)G(Y)Q)=0 , \quad X,Y\in \A.
\end{equation*}
Using the facts that $P,Q$ are central projections in $\mathcal C$ and $\nu$ is a trace, it follows that
\begin{equation*}
\nu(P(DG(X)-G(X)D)G(Y)+Q(G(X)D-DG(X))G(Y))=0 , \quad X,Y\in \A.
\end{equation*}
Since $G(Y)$ runs through the set $\pi(\B)$ which is weak operator dense in $\mathcal C$, and $\nu$ is normal (in fact, it is a vector state as it can be seen in the proof of Proposition 3.19 in \cite{Tak}), using the faithfulness of $\nu$, we conclude that 
\begin{equation*}
P(DG(X)-G(X)D)+Q(G(X)D-DG(X))=0
\end{equation*}
holds for all $X\in \A$. Multiplying this equality by $P$ and $Q$, respectively, from the left, we see that $P(DG(X)-G(X)D)$, $Q(DG(X)-G(X)D)$ are both zero for all $X\in  \A$. This gives that $DG(X)-G(X)D=0$, $X\in \A$. Therefore, $D$ is central in $\pi(\B)$ and, since $\pi$ is injective, we conclude that $C$ is central in $\B$. The proof is complete.
\end{proof}

We will also need the following generalization of the assertion above. 

\begin{lemma}\label{L:3}
Let $\A,\B$ be $C^*$-algebras with faithful traces $\tau$ and $\omega$, respectively. Let $J:\A\to \B$ be a Jordan *-isomorphism, $C\in \Ampp$ and $\beta$ be a positive real number such that $\omega\ler{(CJ(A^{\frac{1}{\beta}})C)^{{\beta}}}=\tau(A)$ holds for all $A\in \App$. Then $C$ is necessarily a central element in $\B$.
\end{lemma}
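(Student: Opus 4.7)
The plan is to reduce the statement to Lemma~\ref{L:2} via a first-order variational argument. I would fix an arbitrary self-adjoint $Z\in\As$ and, for small real $s$, substitute $A=(I+sZ)^\beta\in\App$ into the hypothesis. Since $A^{1/\beta}=I+sZ$ and hence $J(A^{1/\beta})=I+sJ(Z)$, the assumed identity becomes
\[
\omega\bigl((C^2+s\,CJ(Z)C)^\beta\bigr)=\tau\bigl((I+sZ)^\beta\bigr)
\]
for all sufficiently small real $s$.

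The next step is to differentiate both sides at $s=0$. The spectra of $C^2+s\,CJ(Z)C\in\Ampp$ and of $I+sZ\in\App$ stay in a common compact subinterval of $\,]0,\infty[\,$ for $|s|$ small, and on such an interval the power function $f(t)=t^\beta$ is continuously differentiable with derivative $\beta t^{\beta-1}$. Lemma~\ref{L:6} together with cyclicity of $\omega$ then yields
\[
\frac{d}{ds}\bigg|_{s=0}\omega\bigl((C^2+s\,CJ(Z)C)^\beta\bigr)=\beta\,\omega\bigl(C^{2(\beta-1)}\cdot CJ(Z)C\bigr)=\beta\,\omega\bigl(C^{2\beta}J(Z)\bigr),
\]
while the right-hand side differentiates to $\beta\tau(Z)$. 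Cancelling $\beta$ and extending by linearity from $\As$ to all of $\A$, I would obtain
\[
\omega\bigl(C^{2\beta}J(Z)\bigr)=\tau(Z),\qquad Z\in\A.
\]

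From here the conclusion is immediate. Setting $C'=C^{2\beta}\in\Ampp$, Lemma~\ref{L:2} applied to the pair $(J,C')$ gives that $C^{2\beta}$ is central in $\B$. Since $C$ is positive invertible, the continuous function $t\mapsto t^{1/(2\beta)}$ is well defined on $\sigma(C^{2\beta})\subset\,]0,\infty[$, so $C=(C^{2\beta})^{1/(2\beta)}$ belongs to the unital $C^*$-subalgebra generated by $C^{2\beta}$ and is therefore itself central in $\B$.

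The main delicate point in this plan is the clean execution of the differentiation step: one needs to verify the applicability of Lemma~\ref{L:6} at $X=C^2$, $Y=CJ(Z)C$ with $f(t)=t^\beta$ (using that $C^2$ is positive invertible and $CJ(Z)C$ is bounded self-adjoint, so the spectra stay away from $0$ for small $s$) and to rearrange the resulting trace expression via cyclicity to produce $\omega(C^{2\beta}J(Z))$. Once this is handled, the rest is a routine invocation of Lemma~\ref{L:2} together with a single continuous functional calculus step, and I do not anticipate any further substantial obstacle.
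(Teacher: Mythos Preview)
Your proposal is correct and follows essentially the same approach as the paper: both arguments substitute a one-parameter family (the paper uses $B=I+tD$ with $D\in\Bpp$ after rewriting the hypothesis as $\omega((CBC)^\beta)=\tau((J^{-1}(B))^\beta)$, whereas you use $A=(I+sZ)^\beta$ with $Z\in\As$), differentiate at $0$ via Lemma~\ref{L:6}, rearrange with trace cyclicity to obtain $\omega(C^{2\beta}J(\cdot))=\tau(\cdot)$, and then invoke Lemma~\ref{L:2} plus functional calculus to pass from centrality of $C^{2\beta}$ to that of $C$.
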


\begin{proof}
First, we clearly have
$\omega\ler{(CBC)^{\beta}}=\tau\ler{(J^{-1}(B))^\beta}$, $B\in \B$. For any $D\in \Ampp$ and $t\geq 0$, plug $I+tD$ in the place of $B$.
We have
\begin{equation*}
\omega\ler{(C^2+t(CDC))^{\beta}}=\tau\ler{(I+tJ^{-1}(D))^\beta}.
\end{equation*}
Differentiate  with respect to $t$ at $t=0$ and apply Lemma \ref{L:6}. We deduce
\begin{equation*}
\beta\omega\ler{(C^{2(\beta -1)})CDC}=\beta\tau\ler{J^{-1}(D)}.
\end{equation*}
It follows that $\omega(C^{2\beta}D)=\tau(J^{-1}(D))$ holds for all $D\in \Ampp$. Since $\Ampp$ linearly generates $\Am$, we get that the latter equality holds for all elements $D$ of $\B$.
Applying Lemma \ref{L:2}, we apparently obtain that $C^{2\beta}$ and hence also $C$ are central elements in $\B$.
\end{proof}

The last condition concerning centrality that we will use is given in the following assertion.

\begin{lemma}\label{L:5}
Let $\A,\B$ be $C^*$-algebras with faithful traces $\tau$ and $\omega$, respectively. Let $J:\A\to \B$ be a Jordan *-isomorphism and $X_0\in \Ams$ be a self-adjoint element such that 
\begin{equation}\label{E:17}
\omega(\exp(J(T)+X_0))=\tau(\exp(T))
\end{equation}
holds for all $T\in \As$. Then $X_0$ is necessarily a central element in $\B$.
\end{lemma}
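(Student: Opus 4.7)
The plan is to reduce the hypothesis of this lemma to that of Lemma \ref{L:2} via a simple differentiation trick, then invoke Lemma \ref{L:2} directly and pass from $\exp(X_0)$ being central to $X_0$ being central by functional calculus.

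First I would fix $Y\in \As$ and apply \eqref{E:17} to $sY$ in place of $T$ for real parameter $s$, obtaining
\[
\omega(\exp(sJ(Y)+X_0))=\tau(\exp(sY)), \quad s\in \R.
\]
Since $X_0, J(Y), Y$ are self-adjoint, the spectra of $sJ(Y)+X_0$ and $sY$ stay in a common compact interval as $s$ varies in a small neighborhood of $0$. Therefore Lemma \ref{L:6} applies with $f=\exp$ (which is continuously differentiable with $f'=\exp$) on both sides. Differentiating in $s$ at $s=0$ yields
\[
\omega(\exp(X_0)J(Y))=\tau(Y), \quad Y\in \As.
\]
Since every element of $\A$ is a complex linear combination of two self-adjoint elements and both sides are complex-linear in $Y$, this extends to
\[
\omega(\exp(X_0)J(Y))=\tau(Y), \quad Y\in \A.
\]

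Now set $C=\exp(X_0)$. As $X_0\in \Ams$, we have $C\in \Bpp$, and the displayed identity is exactly the hypothesis of Lemma \ref{L:2} (up to noting that $\omega$ is a trace, so $\omega(CJ(Y))=\omega(J(Y)C)$ is not even needed for invoking Lemma \ref{L:2}, which already allows us to conclude). That lemma immediately gives that $C=\exp(X_0)$ is central in $\B$. Finally, since $X_0=\log(\exp(X_0))$ can be obtained from $\exp(X_0)$ via continuous functional calculus, it belongs to the bicommutant of $\exp(X_0)$; in particular any $B\in \B$ commuting with $\exp(X_0)$ commutes with $X_0$. As $\exp(X_0)$ is central, $X_0$ is central in $\B$.

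There is essentially no obstacle here beyond setting up the differentiation step correctly: once the derivative of the exponential trace is computed using Lemma \ref{L:6}, the problem is literally reduced to the already-proved Lemma \ref{L:2}, and the final passage from $\exp(X_0)$ central to $X_0$ central is purely formal.
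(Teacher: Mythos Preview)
Your proposal is correct and follows essentially the same approach as the paper: substitute $T=sY$ into \eqref{E:17}, differentiate at $s=0$ via Lemma \ref{L:6} to obtain $\omega(\exp(X_0)J(Y))=\tau(Y)$, then invoke Lemma \ref{L:2} to conclude $\exp(X_0)$ is central and hence $X_0$ is central. Your write-up is in fact a bit more careful than the paper's (you spell out the extension from $\As$ to $\A$ and the functional-calculus step from $\exp(X_0)$ to $X_0$), but the argument is the same.
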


\begin{proof}
For any self-adjoint element $S\in \As$ and real number $t$, put $tS$ in the place of $T$ in \eqref{E:17}. Differentiating with respect to $t$ at $t=0$, by Lemma \ref{L:6} we obtain
\begin{equation*}
\tau\ler{(\exp{X_0})J(S)}=\tau(S), \quad S\in \As.
\end{equation*}
Applying Lemma \ref{L:2}, we infer that $\exp(X_0)$ and hence also $X_0$ are central elements in $\B$.
\end{proof}

Observe that by our results Lemmas \ref{L:2}-\ref{L:5}, we have the following complement to Theorem \ref{T:pot}.

\begin{corollary}\label{C:C}
Assume that $\A,\B$ are $C^*$-algebras with faithful traces $\tau$ and $\omega$, respectively. If the surjective positive homogeneous map $\phi:\App \to \Bpp$ in Theorem \ref{T:pot} is also trace-preserving meaning that $\omega (\phi(A))=\tau(A)$, $A\in \App$, then we obtain that the elements $C,X_0$ in (c1) and (c3), respectively, are necessarily central. Therefore, it follows that in that case the transformation $\phi$ is of the form $\phi(A)=DJ(A)$, $A\in \App$, where $D\in \Bpp$ is a central element and $J:\A \to \B$ is a Jordan *-isomorphism.
\end{corollary}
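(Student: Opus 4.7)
The plan is to walk through the four cases (c1)--(c4) of Theorem \ref{T:pot} one by one. In each case, I will feed the extra trace-preserving hypothesis $\omega(\phi(A))=\tau(A)$ into the appropriate centrality result among Lemmas \ref{L:2}, \ref{L:3}, and \ref{L:5} to force the parameter appearing in the formula for $\phi$ to be central, and then perform a short algebraic manipulation to rewrite $\phi$ in the desired standard form $\phi(A)=DJ(A)$ with $D\in\Bpp$ central.

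In case (c1), where $\phi(A)=f^{-1}(CJ(f(A))C)$ with $f(t)=t^{\gamma}$, the trace-preserving identity reads
\[
\omega\!\ler{(CJ(A^{\gamma})C)^{1/\gamma}}=\tau(A), \quad A\in \App;
\]
substituting $\beta=1/\gamma$ turns this into exactly the hypothesis of Lemma \ref{L:3}, which forces $C$ to be central. In case (c3), where $\phi(A)=\exp(J(\log A)+X_0)$, writing $T=\log A$ converts the trace-preserving identity into $\omega(\exp(J(T)+X_0))=\tau(\exp(T))$ for all $T\in\As$, which is precisely the hypothesis of Lemma \ref{L:5}, so $X_0$ is central. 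Cases (c2) and (c4) are immediate, since there the algebras $\A,\B$ are already commutative and every element of $\B$ is automatically central.

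It then remains to rewrite $\phi$ as $DJ(A)$ in each case. For (c1), centrality of $C$ combined with \eqref{E:J2} gives $CJ(A^{\gamma})C=C^{2}J(A)^{\gamma}$, and since $C^{2}$ commutes with $J(A)^{\gamma}$ one can extract the $1/\gamma$-th power to obtain $\phi(A)=C^{2/\gamma}J(A)$, so one takes $D=C^{2/\gamma}$. For (c3), the functional-calculus property of Jordan *-isomorphisms yields $J(\log A)=\log J(A)$; since $X_0$ is central it commutes with $\log J(A)$, and the exponential splits to give $\phi(A)=\exp(X_0)J(A)$, so $D=\exp(X_0)$. In cases (c2) and (c4) we already have $\phi(A)=CJ(A)$ and simply set $D=C$, which is central by commutativity.

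I do not anticipate any real obstacle here: once Theorem \ref{T:pot} is invoked, the only points that demand a moment of care are correctly matching case (c1) against Lemma \ref{L:3} via the substitution $\beta=1/\gamma$ and, in case (c3), using that the center commutes with the range of $J$ in order to factor the exponential. Everything else amounts to bookkeeping.
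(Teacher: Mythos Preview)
Your proposal is correct and matches the paper's intended argument exactly: the paper states Corollary \ref{C:C} immediately after Lemmas \ref{L:2}--\ref{L:5} with no further proof, and your write-up simply spells out the routine matching of (c1) with Lemma \ref{L:3} (via $\beta=1/\gamma$) and (c3) with Lemma \ref{L:5}, followed by the obvious algebraic simplification using centrality. Nothing is missing or different.
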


After those long preparations we are now in a position to present the proofs of our main results. We begin with that of Theorem \ref{T:Cone}.

\begin{proof}[Proof of Theorem \ref{T:Cone}]
We deal only with the necessity parts of the statements corresponding to the different quantities \eqref{E:Q1}-\eqref{E:Q5}. The sufficiency parts are easy to check using the properties of Jordan *-isomorphisms listed previously (after Lemma \ref{L:4}).

I. We first assume that $\phi:\App\to \Ampp$ is a surjective map which satisfies \eqref{E:1}. 
Using Lemma \ref{L:1}, we have that $\phi$ has the following order preserving property: for any pair of elements $A,B\in \App$, we have $A^{\frac{\alpha}{z}}\leq B^{\frac{\alpha}{z}}$ if and only if $\phi(A)^{\frac{\alpha}{z}}\leq \phi(B)^{\frac{\alpha}{z}}$. Indeed, we have
\begin{equation}
\begin{gathered}
A^{\frac{\alpha}{z}}\leq B^{\frac{\alpha}{z}} \Leftrightarrow
\tau\hy\Qaz(A||X)\leq \tau\hy\Qaz(B||X), \enskip X\in \App\\ \Leftrightarrow
\omega \hy\Qaz(\phi(A)||\phi(X))\leq \omega \hy\Qaz(\phi(B)||\phi(X)), \enskip X\in \App \Leftrightarrow
\phi(A)^{\frac{\alpha}{z}}\leq \phi(B)^{\frac{\alpha}{z}}.
\end{gathered}
\end{equation} 
Furthermore, from the original preserver property \eqref{E:1} we also easily conclude that $\phi$ is positive homogeneous.
To verify this, for given $A\in\App$ and $t>0$, and for arbitrary $X\in \App$ we compute as follows
\begin{equation}\label{E:22}
\begin{gathered}
\omega\hy\Qaz(\phi(tA)||\phi(X))=
\tau\hy\Qaz(tA||X)=
t^\alpha \tau\hy\Qaz(A||X)\\=
t^\alpha \omega\hy\Qaz(\phi(A)||\phi(X))=
\omega\hy\Qaz(t\phi(A)||\phi(X)).
\end{gathered}
\end{equation}
By Lemma \ref{L:1}, we conclude that $\phi(tA)=t\phi(A)$. If we write $B=A$ in \eqref{E:1}, we get $\omega(\phi(A))=\tau (A)$, $A\in \App$.
We can apply (c1) in Theorem \ref{T:pot} and Corollary \ref{C:C}
to deduce that there are a central element $C\in \Ampp$ and a Jordan *-isomorphism $J:\A\to \B$ such that $\phi$ is of the form $\phi(A)=CJ(A)$, $A\in \App$.
This gives us the necessity parts of the statements concerning the quantities in \eqref{E:Q1}, \eqref{E:Q2} and \eqref{E:Q3}.

II. We next examine the case of the quantity in \eqref{E:Q4}. Let $\phi:\App \to \Ampp$ be a surjective map with the property that
\begin{equation}\label{E:21}
\omega\hy\maxQa(\phi(A)||\phi(B))=\tau\hy\maxQa(A||B), \quad A,B\in \App.
\end{equation}
We can follow the argument given in the previous part of the proof and apply Lemma \ref{L:7} to see that $\phi$ is a positive homogeneous order isomorphism from $\App$ onto $\Ampp$. Putting $B=A$ into the equality \eqref{E:21}, we get $\omega(\phi(A))=\tau (A)$, $A\in \App$. Therefore, applying (c1) in Theorem \ref{T:pot} and Corollary \ref{C:C} again, we obtain the required conclusion. 

III. Finally, we turn to the case of the quantity in \eqref{E:Q5}.
Let $\phi:\App\to \Ampp$ be a surjective map which satisfies
\begin{equation}\label{E:4}
\omega\hy\moQa(\phi(A)||\phi(B))=\tau\hy\moQa(A||B), \quad A,B\in \App.
\end{equation}
By Lemma \ref{L:4} we deduce that $\phi$ has the following property: for any $A,B\in \App$ we have $\log A\leq \log B$ if and only if $\log \phi(A)\leq \log \phi(B)$. Applying a simple calculation of the style of \eqref{E:22} and referring to Lemma \ref{L:4} again, we obtain that $\phi$ is positive homogeneous. 
If we put $B=A$ into \eqref{E:4}, we get $\omega(\phi(A))=\tau (A)$, $A\in \App$. By (c3) in Theorem \ref{T:pot} and Corollary \ref{C:C}, we can trivially complete the necessity part of the proof concerning the quantity \eqref{E:Q5}.

As already mentioned, the sufficiency parts of the corresponding statements are easy to check by applying the previously listed properties of Jordan *-isomorphisms. This finishes the proof of the theorem.
\end{proof}

We proceed with the following comment.
When not restricted to the density space, but defined and studied 
on the whole positive definite cone of a matrix algebra (i.e., the full operator algebra over a finite dimensional Hilbert space), the quantities in \eqref{E:Q1}-\eqref{E:Q5} are in fact usually normalized by the trace of the first variable (see, for example, Definitions 4.3 and 4.5 in \cite{Toma}). Apparently, modifying the problems what we have solved in Theorem \ref{T:Cone} in that way, we can arrive at a new collection of problems. We do not want to deal with all those questions in details since here our focus is on maps defined on density spaces (which are the most relevant problems we believe) and, as we will see soon, the required results can be derived directly from Theorem \ref{T:Cone}. However, we pick one such question, the one related to the quantity $\moQa$ and demonstrate that, with some modifications, our approach developed above can be adopted to that setting, too. We believe that with more or less difficulties all other problems concerning the normalized versions of the quantities in \eqref{E:Q1}-\eqref{E:Q5} could be solved as well. The reason for choosing exactly the quantity $\moQa$ is that the corresponding symmetry transformations have been determined recently in the paper \cite{NagyGaalLMP} in the finite dimensional case. Hence, the following result is a far reaching generalization of Theorem 1 in \cite{NagyGaalLMP} for the case of abstract $C^*$-algebras.

\begin{theorem}\label{T:Conedivided}
Let $\A, \B$ be $C^*$-algebras with faithful traces $\tau,\omega$, respectively, and let $\alpha$ be a given positive real number different from 1. Assume that $\phi:\App\to \Ampp$ is a surjective map. Then $\phi$ satisfies
\begin{equation}\label{E:11}
\frac{\omega\hy\moQa(\phi(A)||\phi(B))}{\omega(\phi(A))}=\frac{\tau\hy\moQa(A||B)}{\tau (A)}, \quad A,B\in \App
\end{equation} 
if and only if there are a central element $C\in \Ampp$ and a Jordan *-isomorphism $\phi:\A\to \B$ such that
$\phi(A)=CJ(A)$ holds for all $A\in \App$ and
$\omega(CJ(X))=\frac{\omega(C)}{\tau(I)}\tau(X)$ is satisfied for all $X\in \A$.

In particular, any surjective map $\phi:\App\to \Ampp$ which satisfies \eqref{E:11} is necessarily a constant multiple of a map of the standard form.
\end{theorem}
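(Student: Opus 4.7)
The strategy is to first reduce to Theorem \ref{T:pot}(c3) via a rescaled map $\hat\phi$, and then substitute the resulting explicit form back into the hypothesis to force both the constancy of the trace ratio and the centrality of the additive parameter. Set $g(A):=\omega(\phi(A))/\tau(A)$, so the hypothesis reads
\[
\omega\hy\moQa(\phi(A)||\phi(B)) = g(A)\,\tau\hy\moQa(A||B), \qquad A,B \in \App.
\]

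First I would establish that $\phi$ is positive homogeneous. The scaling identity $\moQa(tA||B)=t^\alpha\moQa(A||B)$ combined with the hypothesis yields $\omega\hy\moQa(\phi(tA)||\phi(B)) = r\,\omega\hy\moQa(\phi(A)||\phi(B))$ for every $B \in \App$, with $r:=t^{\alpha-1}\omega(\phi(tA))/\omega(\phi(A))$. Writing $r=s^\alpha$, this rewrites as $\omega\hy\moQa(\phi(tA)||\phi(B))=\omega\hy\moQa(s\phi(A)||\phi(B))$ for every $B$; surjectivity lets $\phi(B)$ range over $\Ampp$ and the uniqueness part of Lemma \ref{L:4} forces $\phi(tA)=s\phi(A)$. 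Taking $\omega$ of both sides and feeding the result back into the definition of $r$ gives $r^{(\alpha-1)/\alpha}=t^{\alpha-1}$, so (using $\alpha\neq 1$) $r=t^\alpha$ and $\phi(tA)=t\phi(A)$; in particular $g(tA)=g(A)$. Injectivity of $\phi$ follows by an analogous Lemma \ref{L:4} argument: if $\phi(A)=\phi(A')$ then $\tau\hy\moQa(A||X)/\tau\hy\moQa(A'||X)$ is constant in $X$, forcing $A$ to be a scalar multiple of $A'$, and homogeneity then forces $A=A'$.

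Next I would set $\hat\phi(A):=g(A)^{-1/\alpha}\phi(A)$. This is easily seen to be a positive homogeneous bijection $\App\to\Ampp$ satisfying
\[
\omega\hy\moQa(\hat\phi(A)||\phi(X)) = \tau\hy\moQa(A||X), \qquad A,X \in \App.
\]
Varying $X$ (hence $\phi(X)$ over $\Ampp$) and applying Lemma \ref{L:4} on both sides shows $\hat\phi$ is a log-order isomorphism: $\log A\leq \log B$ iff $\log\hat\phi(A)\leq \log\hat\phi(B)$. Theorem \ref{T:pot}(c3) then produces a Jordan *-isomorphism $J:\A\to\B$ and some $X_0\in \Ams$ with $\hat\phi(A)=\exp(J(\log A)+X_0)$, whence
\[
\phi(A)=g(A)^{1/\alpha}\exp(J(\log A)+X_0), \qquad A\in\App.
\]

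The decisive step is to substitute this form back into the hypothesis. With $T:=\alpha\log A+(1-\alpha)\log B$, linearity of $J$ and the centrality of scalar multiples of the identity combine to give
\[
\omega\bigl(\exp(\alpha\log\phi(A)+(1-\alpha)\log\phi(B))\bigr)=g(A)\,g(B)^{(1-\alpha)/\alpha}\,\omega(\exp(J(T)+X_0)),
\]
which the hypothesis demands equal $g(A)\,\tau(\exp T)$. Since $(T,B)\in\As\times\App$ is a free pair (one recovers $A$ uniquely from $\log A=(T-(1-\alpha)\log B)/\alpha$), the resulting identity $g(B)^{(1-\alpha)/\alpha}\omega(\exp(J(T)+X_0))=\tau(\exp T)$ combined with $\omega(\exp(J(T)+X_0))>0$ (faithfulness of $\omega$) and $\alpha\neq 1$ forces $g\equiv c$ to be a positive constant. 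Absorbing $\log c$ into $X_0$ then reduces the residual identity to precisely the hypothesis of Lemma \ref{L:5}, whence $X_0$ is central. Setting $C:=c^{1/\alpha}\exp(X_0)$, which is central and lies in $\Ampp$, one obtains $\phi(A)=CJ(A)$; evaluating $g\equiv c$ at $X=I$ identifies $c=\omega(C)/\tau(I)$ and yields the required normalization $\omega(CJ(X))=(\omega(C)/\tau(I))\tau(X)$. The sufficiency direction is a routine verification using $J(f(X))=f(J(X))$ and centrality of $C$. I expect the main obstacle to be this final substitution-and-decoupling step, which is what genuinely distinguishes the normalized problem from the $\moQa$ case of Theorem \ref{T:Cone}: once the explicit form of $\hat\phi$ is in hand, the fact that the $B$-dependent factor $g(B)^{1/\alpha}$ sits as a central scalar inside the exponential is precisely what separates the $T$ and $B$ dependencies and forces $g$ constant.
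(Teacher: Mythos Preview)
Your argument is correct, but it takes a noticeably more circuitous route than the paper's. The paper exploits the \emph{second} variable: since the normalizing factor in \eqref{E:11} depends only on the first argument, the inequality
\[
\frac{\tau\hy\moQa(X||B)}{\tau(X)}\le\frac{\tau\hy\moQa(X||B')}{\tau(X)}
\]
is literally the same as $\tau\hy\moQa(X||B)\le\tau\hy\moQa(X||B')$, and the first statement of Lemma~\ref{L:4} (applied with $T=(1-\alpha)\log B$, $S=(1-\alpha)\log B'$) characterizes this as $(1-\alpha)\log B\le(1-\alpha)\log B'$. Hence $\phi$ itself is directly seen to be a positive homogeneous log-order isomorphism, and Theorem~\ref{T:pot}(c3) applies to $\phi$ with no auxiliary construction. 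You instead work through the first variable, where the normalizations differ, and compensate by building the rescaled map $\hat\phi(A)=g(A)^{-1/\alpha}\phi(A)$; this forces you to establish homogeneity and injectivity of $\phi$ and surjectivity of $\hat\phi$ by separate ad hoc arguments before you can invoke Theorem~\ref{T:pot}(c3). After that point the two proofs converge: both substitute the form $\exp(J(\log A)+X_0)$ back into \eqref{E:11} and decouple the two free self-adjoint parameters (your pair $(T,B)$, the paper's pair $(T+S,\beta T)$) to obtain $\omega(\exp(J(R)+X_0))=\delta\,\tau(\exp R)$ for a constant $\delta$, and then invoke Lemma~\ref{L:5}. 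So your strategy works, but the paper's second-variable trick eliminates the entire $\hat\phi$ detour.
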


\begin{proof}
Assume that the surjective map $\phi:\App\to \Ampp$ satisfies
\eqref{E:11}. By the first part of the statement in Lemma \ref{L:4}, we have the following equivalence: for any $B,B'\in \App$, the inequality $(1-\alpha)\log B\leq (1-\alpha)\log B'$ holds if and only if 
\begin{equation*}\label{E:50}
\frac{\tau\hy\moQa(X||B)}{\tau (X)} \leq \frac{\tau\hy\moQa(X||B')}{\tau (X)}, \quad X\in \App.
\end{equation*}
One can easily deduce from this characterization that $\phi$ has the property that $\log B\leq \log B'$ if and only if $\log \phi(B)\leq \log \phi(B')$ for any $B,B'\in \App$ and then that $\phi$ is positive homogeneous. By (c3) in Theorem \ref{T:pot} we infer that
there is a Jordan *-isomorphism $J:\A \to \B$ and an element $X_0\in \Ams$ such that $\phi(A)=\exp(J(\log A)+X_0)$, $A\in \App$. We claim that $X_0$ is a central element in $\Am$.
In fact, using \eqref{E:11}, we have
\begin{equation*}
\frac{\omega\ler{\exp(\alpha J(\log A)+(1-\alpha)J(\log B)+X_0)}}{\omega\ler{\exp( J(\log A)+X_0)}}=\frac{\tau\ler{\exp(\alpha \log A +(1-\alpha)\log B)}}{\tau\ler{\exp(\log A)}}, \quad A,B\in \App.
\end{equation*}
Then, with $\beta=1/\alpha$, we can rewrite this as
\begin{equation*}
\frac{\omega(\exp(J(T)+J(S)+X_0))}{\omega(\exp(\beta J(T)+X_0))}=\frac{\tau(\exp(T+S))}{\tau(\exp(\beta T))}, \quad T,S\in \As.
\end{equation*}
Since the elements $T+S$ and $\beta T$ are in fact independent, we infer that
\begin{equation*}\label{E:24}
\frac{\omega(\exp(J(R)+X_0))}{\omega(\exp(J(R')+X_0))}=\frac{\tau(\exp(R))}{\tau(\exp(R'))}, \quad R,R'\in \As.
\end{equation*}
This implies that
\begin{equation}\label{E:24a}
\omega(\exp(J(R)+X_0))=\delta \tau(\exp(R)), \quad R\in \As
\end{equation}
holds with some positive constant $\delta$. 
Writing $\delta=\exp{\gamma}$ for some $\gamma\in \R$, we have 
\begin{equation*}\label{E:24b}
\omega(\exp(J(R)+(X_0-\gamma I)))=\tau(\exp(R)), \quad R\in \As.
\end{equation*}
Using Lemma \ref{L:5}, we deduce that $X_0-\gamma I$ and hence also $X_0$ are central elements of $\B$. Therefore, with the central element $D=\exp(X_0)$ in $\B$, it follows that 
\[
\phi(A)=\exp(J(\log A)+X_0)=DJ(A), \quad A\in \B.
\]
Plugging $R=\log A$ into \eqref{E:24a}, we have $\omega(DJ(A))=\delta \tau(A)$, $A\in \App$. Choosing $A=I$, it is now obvious that $\delta=\frac{\omega(D)}{\tau(I)}$. We obtain
\[
\omega(DJ(A))=\frac{\omega(D)}{\tau(I)} \tau(A), \quad A\in \App.
\]
By linearity, the above equality holds also on the whole algebra $\A$.
This completes the proof of the necessity part of the theorem.

The sufficiency part can easily be checked.
\end{proof}

We next present the proof of our statement concerning quantum R\'enyi relative entropy preservers between density spaces of $C^*$-algebras which is one of our main goals in this paper.

\begin{proof}[Proof of Corollary \ref{C:main}]
As above, we check only the necessity parts of the statement, the sufficiency follows by easy computations.
Our strategy is simple and applies in the case of any of the considered quantum R\'enyi relative entropies. In fact, the only thing we have to do is the following. We extend the given transformation $\phi:\DtAi \to \DtAmi$ in the statement to a surjective map $\psi:\App\to \Ampp$ between positive definite cones by the simple formula 
\begin{equation}\label{E:ext}
\psi(A)=\tau(A)\phi \ler{\frac{A}{\tau(A)}}, \quad A\in \App.
\end{equation}
It can easily be checked that this extension $\psi$ satisfies the conditions in Theorem \ref{T:Cone}. Application of that result apparently gives us the desired formula for $\phi$.
\end{proof}

We now turn to the proof of our theorem about the essential difference among the considered quantum R\'enyi relative entropies.

\begin{proof}[Proof of Theorem \ref{T:commut}]
I. In the first part of the proof let $\phi:\DtAi\to \DtAmi$ be a surjective map which satisfies
\begin{equation*}\label{E:51}
\omega\hy\moDa((\phi(A)||\phi(B))=\tau\hy\Daz(A||B), \quad A,B\in \DtAi.
\end{equation*} 
By the formula given in  
\eqref{E:ext} we extend the transformation $\phi:\DtAi\to \DtAmi$ to a map (denoted by the same symbol) $\phi:\App\to \Ampp$. This new transformation is a surjective map between positive definite cones and it can easily be verified that it satisfies 
\begin{equation*}\label{E:14}
\omega\hy\moQa(\phi(A)||\phi(B))=\tau\hy\Qaz(A||B), \quad A,B\in \App.
\end{equation*}
By Lemmas \ref{L:1} and \ref{L:4} we have that for any $A,B\in \App$, the inequality $A^{\frac{\alpha}{z}}\leq B^{\frac{\alpha}{z}}$ holds if and only if $\log \phi(A)\leq \log \phi(B)$ is valid. By its construction $\phi$ is obviously positive homogeneous. Applying (c4) in Theorem \ref{T:pot} we infer that $\A,\B$ are commutative and we are done.

As for other pairs of quantum R\'enyi relatives entropies,
we can continue in a similar fashion. 

II. Suppose that $\phi:\DtAi\to \DtAmi$ is a surjective map such that 
\begin{equation}\label{E:44}
\omega\hy\moQa(\phi(A)||\phi(B))=\tau\hy\maxQa(A||B)
\end{equation}
for all $A,B\in \DtAi$. Here we assume that $\alpha\leq 2$.
Applying the method of extensions \eqref{E:ext} we can extend $\phi$ to a positive homogeneous surjective map denoted by the same symbol $\phi$ which is defined between the positive definite cones $\App,\Ampp$ and satisfies \eqref{E:44} for all $A,B\in \App$.
By Lemmas \ref{L:7} and \ref{L:4} we obtain that for any $A,B\in \App$, the inequality $A\leq B$ holds if and only if $\log \phi(A)\leq \log \phi(B)$. (We remark that here we need to consider the cases $\alpha<1$ and $1<\alpha \leq 2$ separately, see Lemma \ref{L:7}.) One can argue and complete the proof in the same way as in part I of the proof. 

III. In the third part of the proof
we assume that, after employing the extension method \eqref{E:ext}, $\phi:\App\to \Ampp$ is a positive homogeneous surjective map such that 
\begin{equation*}
\omega\hy\Qazv(\phi(A)||\phi(B))=\tau\hy\Qaz(A||B), \quad A,B\in \App
\end{equation*}
holds for some positive real numbers $z,z'$ with $z\neq z'$. Using Lemma \ref{L:1}, we obtain that for any $A,B\in \App$, we have that $A^\frac{\alpha}{z}\leq B^\frac{\alpha}{z}$ holds if and only if $\phi(A)^\frac{\alpha}{z'}\leq \phi(B)^\frac{\alpha}{z'}$. Applying (c2) in Theorem \ref{T:pot} we conclude that $\A,\B$ are necessarily commutative.

IV. In the last part, again after applying the method of extension \eqref{E:ext},  we assume that $\phi:\App \to \Bpp$ is a positive homogeneous surjective map such that 
\begin{equation}\label{E:60}
\omega\hy\maxQa(\phi(A)||\phi(B))=\tau\hy\Qaz(A||B)
\end{equation}
holds for all $A,B\in \Bpp$. 

We have to distinguish two cases. First, assume that $1<\alpha \leq 2$, this is the more complicated case. By Lemma \ref{L:1} and Lemma \ref{L:7} we deduce that for any $A,B\in \App$, $A^\frac{\alpha-1}{z}\leq  
B^\frac{\alpha-1}{z}$ if and only if $\phi(A)\leq \phi(B)$. Assuming $\alpha-1\neq z$, by (c2) in Theorem \ref{T:pot} we obtain that the algebras $\A,\B$ are commutative. If $\alpha-1=z$, then (c1) in Theorem \ref{T:pot} applies. Since, by its construction, $\phi$ is trace-preserving, using Corollary \ref{C:C} as well, we have
a central element $C\in \Bpp$ and a Jordan *-isomorphism $J:\A \to \B$ such that
$\phi(A)=CJ(A)$, $A\in \App$. Applying \eqref{E:60} for arbitrary $A,B\in \App$ and using the identity $z=\alpha -1$, we have
\begin{equation*}
\omega \ler{C J(B)^{\fel} \ler{J(B)^{-\fel}J(A)J(B)^{-\fel}}^\alpha J(B)^{\fel}}=\tau\ler{B^{-\fel}A^{\frac{\alpha}{\alpha-1}}B^{-\fel}}^{\alpha-1}, \quad A,B\in \App.
\end{equation*}
By the properties of Jordan *-isomorphisms we get
\begin{equation}\label{E:P2}
J(B)^{\fel} \ler{J(B)^{-\fel}J(A)J(B)^{-\fel}}^\alpha J(B)^{\fel}=J\ler{B^{\fel} \ler{B^{-\fel}A B^{-\fel}}^\alpha B^{\fel}}, \quad A,B\in \App.
\end{equation}
Using the trace-preserving property
\begin{equation*}\label{E:63}
\omega (CJ(A))=\tau (A), \quad A\in \App,
\end{equation*}
we deduce that
\begin{equation}\label{E:64}
\tau \ler{ B^{\fel} \ler{B^{-\fel}AB^{-\fel}}^\alpha B^{\fel}}=\tau\ler{B^{-\fel}A^{\frac{\alpha}{\alpha-1}}B^{-\fel}}^{\alpha-1}, \quad A,B\in \App.
\end{equation}
We claim that this implies that $\alpha =2$. To show that, first observe that by Lemma \ref{L:20} we have
\begin{equation}\label{E:65}
B^{\fel} \ler{B^{-\fel}AB^{-\fel}}^\alpha B^{\fel}=
A^{\fel} \ler{A^{\fel}B^{-1}A^{\fel}}^{\alpha-1} A^{\fel}, \quad A,B\in \App.
\end{equation}
As we have referred to that in the proof of Lemma \ref{L:1}, for every $X,Y\in \App$ we have that $XY^2X$ and $YX^2Y$ are unitarily equivalent. Therefore, we compute
\begin{equation}\label{E:66}
\tau\ler{B^{-\fel}A^{\frac{\alpha}{\alpha-1}}B^{-\fel}}^{\alpha-1}=\tau \ler{A^\frac{\alpha}{2(\alpha-1)}B^{-1}A^\frac{\alpha}{2(\alpha-1)}}^{\alpha-1}  , \quad A,B\in \App.
\end{equation}
Using \eqref{E:64}, \eqref{E:65} and \eqref{E:66}, it follows that
\begin{equation*}
\tau\ler{A^{\fel} \ler{A^{\fel}B^{-1}A^{\fel}}^{\alpha-1} A^{\fel} }= \tau \ler{A^\frac{\alpha}{2(\alpha-1)}B^{-1}A^\frac{\alpha}{2(\alpha-1)}}^{\alpha-1}  , \quad A,B\in \App.
\end{equation*}
Let $X,Y\in \App$ be arbitrary and choose $A,B\in \App$ such that $A^{\fel}B^{-1}A^{\fel}=X$, $A^{\fel}=Y^{\alpha-1}$.
Then, from the above displayed formula we can derive
\begin{equation*}
\tau(Y^{\alpha-1}X^{\alpha-1}Y^{\alpha-1})=\tau(YXY)^{\alpha-1}, \quad X,Y\in \App.
\end{equation*}
From this identity, using the characterization of the order given in Lemma \ref{L:1}, we obtain that for any $X,X'\in \App$, the relation $X\leq X'$ holds if and only if $X^{\alpha-1} \leq X'^{\alpha-1}$ is valid. 
Assume that $\A$ is non-commutative.
Since the exponent $\alpha-1$ is positive, we obtain (referring to Theorem 2 in \cite{NUW}) that $\alpha-1=1$, that is $\alpha=2$. 
But we then clearly have $\tau\hy D_2^{max}(.||.)=\tau\hy D_{2,1}(.||.)=\tau\hy D_2^c(.||.)$ meaning that the quantum R\'enyi divergences what we are considering are not different, a contradiction. Therefore, $\A$ is necessarily commutative and  because $J$ is a Jordan *-isomorphism between $\A$ and $\B$, hence $\B$ is commutative, too.
This completes the proof in the case where $1<\alpha \leq 2$.

Let us finally examine the case where $\alpha<1$.
Similarly to what we have done above,  by Lemma \ref{L:1} and Lemma \ref{L:7} we deduce that for any $A,B\in \App$, the inequality $A^{\frac{\alpha}{z}}\leq B^{\frac{\alpha}{z}}$ holds if and only if $\phi(A)\leq \phi(B)$ is valid. 
If $\alpha\neq z$, then by (c4) in Theorem \ref{T:pot} we have that $\A,\B$ are commutative. In the case where $\alpha =z$, referring to the fact that $\phi$ is trace-preserving (because of its construction), we obtain by (c1) in Theorem \ref{T:pot} and Corollary \ref{C:C} that there are a central element $C\in \Bpp$ and a Jordan *-isomorphism $J:\A \to \B$ such that
$\phi(A)=CJ(A)$, $A\in \App$.
Using  \eqref{E:60} and the identity $z=\alpha$, we compute 
\begin{equation*}
\omega \ler{C J(B)^{\fel} \ler{J(B)^{-\fel}J(A)J(B)^{-\fel}}^\alpha J(B)^{\fel}}=\tau\ler{B^{\frac{1-\alpha}{2\alpha}}AB^{\frac{1-\alpha}{2\alpha}}}^{\alpha}, \quad A,B\in \App.
\end{equation*}
Applying \eqref{E:P2} and the trace-preserving property of $\phi$, it follows that
\begin{equation}\label{E:69}
\begin{gathered}
\tau \ler{ B^{\fel}(B^{-\fel}A B^{-\fel})^\alpha B^{\fel}}=
\tau\ler{B^{\frac{1-\alpha}{2\alpha}}AB^{\frac{1-\alpha}{2\alpha}}}^{\alpha}, \quad A,B\in \App.
\end{gathered}
\end{equation}
Let now $T,S\in \App$ be arbitrary. We can choose $A,B\in \App$ such that
$B^{-\fel}A B^{-\fel}=T$ and $B^\fel=S^\alpha$. Using \eqref{E:69} we can derive
\begin{equation*}
\tau (S^\alpha T^\alpha S^\alpha)=\tau (STS)^\alpha, \quad S,T\in \App.
\end{equation*}
Assume that $\A$ is non-commutative.
Arguing just as in the former part of the proof concerning the case $\alpha>1$, we would conclude that $\alpha$ is necessarily 1, a contradiction. Therefore, $\A$ and then $\B$, too, are commutative.

The proof of the theorem is now complete.
\end{proof} 

In the last part of the paper we present the proofs of our results concerning the Umegaki and the Belavkin-Staszewski  relative entropies. Similarly as above, our arguments rest on characterizations of the order in terms of the relative entropies in question. In what follows we consider the  quantities $S_U^{\tau}(.||.)$ and $S_{BS}^{\tau}(.||.)$ on the whole positive definite cone defined by the same formula \eqref{E:U} and \eqref{E:BS}, respectively.

\begin{lemma}\label{L:U}
Let $\A$ be a $C^*$-algebra with a faithful trace $\tau$. Select $A,B\in \App$. We have $\log A\leq \log B$ if and only if $S_U^{\tau}(X||B)\leq S_U^{\tau}(X||A)$ holds for all $X\in \App$.
\end{lemma}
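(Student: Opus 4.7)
The plan is to reduce the statement to a much simpler linear characterization of positivity. Observe that by the definition in \eqref{E:U},
\[
S_U^\tau(X\|B)-S_U^\tau(X\|A)=\tau(X\log A)-\tau(X\log B)=-\tau(XT),
\]
where $T=\log B-\log A\in\As$. Thus the condition ``$S_U^\tau(X\|B)\le S_U^\tau(X\|A)$ for every $X\in\App$'' is equivalent to saying $\tau(XT)\ge 0$ for all $X\in\App$, while the conclusion ``$\log A\le\log B$'' is just $T\ge0$. So the whole lemma collapses to the claim that for $T\in\As$ we have $T\ge0$ if and only if $\tau(XT)\ge 0$ for every $X\in\App$.

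The necessity is immediate: if $T\ge 0$ and $X\in\App$, then $\tau(XT)=\tau(X^{1/2}TX^{1/2})\ge 0$ by positivity of $\tau$.

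For the sufficiency, I would decompose $T$ via continuous functional calculus as $T=T_+-T_-$ with $T_\pm\in\Ap$ and $T_+T_-=0$, so that $T_-T=-T_-^{2}$. For any $\varepsilon>0$ the element $T_-+\varepsilon I$ lies in $\App$, and applying the assumption yields
\[
0\le\tau\bigl((T_-+\varepsilon I)T\bigr)=-\tau(T_-^2)+\varepsilon\tau(T).
\]
Letting $\varepsilon\downarrow 0$ gives $\tau(T_-^2)\le 0$, and then faithfulness of $\tau$ forces $T_-^2=0$, hence $T_-=0$ and $T\ge 0$.

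The argument is essentially computational and I do not foresee any real obstacle; the only place that needs a little care is the sufficiency step, where one must avoid appealing to spectral projections of $T$ (which need not lie in $\A$) and instead use the element $T_-+\varepsilon I\in\App$ produced by continuous functional calculus, concluding via faithfulness of $\tau$ exactly as in the final step of the proof of Lemma~\ref{L:1}.
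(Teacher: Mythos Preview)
Your proof is correct and follows the same reduction as the paper: both observe that the inequality $S_U^\tau(X\|B)\le S_U^\tau(X\|A)$ unwinds to $\tau(XT)\ge 0$ for $T=\log B-\log A$, so the lemma becomes the characterization ``$T\ge 0 \Leftrightarrow \tau(XT)\ge 0$ for all $X\in\App$''. The paper establishes the sufficiency by cross-referencing the argument in Lemma~\ref{L:4} (which in turn appeals to the functional-calculus approximation technique of Lemma~\ref{L:1}), whereas your use of $X=T_-+\varepsilon I$ gives a cleaner, self-contained route that stays entirely inside $\A$ and avoids any passage to $B(H)$ or approximation of spectral projections.
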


\begin{proof}
Clearly, we have that $S_U^{\tau}(X||B)\leq S_U^{\tau}(X||A)$ holds for all $X\in \App$ if and only if 
\[
\tau(X\log A)\leq \tau(X\log B),\quad X\in \App.
\]
This is equivalent to $\log A\leq \log B$, cf. the last part of the proof of Lemma \ref{L:4}.
\end{proof}

We can now present the proof of Theorem \ref{T:Udens}. As we have mentioned in the first section of the paper, in \cite{ML17a} we described the structure of all bijective maps between the positive definite cones of $C^*$-algebras with faithful traces which are invariance transformations under $S_U^{\tau}(.||.)$. To be honest, in Theorem 1 in that paper we assumed that the transformations had the same domain and codomain and that the trace was normalized, assigned 1 to the identity. 
However, one can easily see that those restrictions in \cite{ML17a} are not crucial, and we could apply an appropriately  modified version of the result there to prove Theorem \ref{T:Udens}. 
Let us also mention that the approach in \cite{ML17a} was completely different from what we follow here, not relied on structural theorems of Thompson isometries and related maps. 

\begin{proof}[Proof of Theorem \ref{T:Udens}]
One could argue as follows. Let $\phi:\DtAi\to \DtAmi$ be a surjective map such that
\begin{equation*}
S_U^{\omega}(\phi(A)||\phi(B))=S_U^{\tau}(A||B), \quad A,B\in \DtAi.
\end{equation*} 
Applying the extension formula given in \eqref{E:ext}, one can check that $\phi$ extends to a surjective map from $\App$ onto $\Ampp$, denoted by the same symbol $\phi$,  satisfying
\begin{equation}\label{E:55}
S_U^{\omega}(\phi(A)||\phi(B))=S_U^{\tau}(A||B), \quad A,B\in \App.
\end{equation}
By Lemma \ref{L:U} we have that for any $A,B\in \App$, the inequality $\log \phi(A)\leq \log \phi(B)$ holds if and only if $\log A\leq \log B$ is valid. 
This implies that that $\phi$ is injective and hence bijective. Referring to the remark before the present proof, we could now use Theorem 1 in \cite{ML17a} and conclude  that there are a central element $C\in \Ampp$ and a Jordan *-isomorphism $J:\A\to \Am$ such that $\phi(A)=CJ(A)$, $A\in \App$ and $\omega(CJ(A)J(B))=\tau(AB)$ holds for all $A,B\in\App$. Since, $J(I)=I$, we could finish the proof of the necessity part of the theorem. 

However, we can give also a direct argument following the general approach of the present paper. Indeed, we have that for any $A,B\in \App$, the inequality $\log A\leq \log B$ holds if and only if $\log \phi(A)\leq \log \phi(B)$. Moreover, by its construction, the extension $\phi$ is clearly positive homogeneous and trace-preserving. We apply (c3) in Theorem \ref{T:pot} and Corollary \ref{C:C} to conclude that
\[
\phi(A)=CJ(A), \quad A\in \App
\]
holds with some central element $C\in \Bpp$ and Jordan *-isomorphism $J:\A \to \Am$ and we obtain the necessity part of the statement.

As for the sufficiency, it requires only a little bit of nontrivial calculation. Assume that $\phi$ is of the form $\phi(A)=CJ(A)$, $A\in \DtAi$ where $C\in \Ampp$ is a central element, $J:\A\to \Am$ is a Jordan *-isomorphism and $\omega(CJ(X))=\tau(X)$ holds for all $X\in \A$. Clearly, $\phi:\DtAi\to \DtAmi$ is a bijective map. Moreover, we compute
\begin{equation*}
\begin{gathered}
2\omega (CJ(A)J(B))=\omega\ler{C(J(A)J(B)+J(B)J(A))}=
\omega\ler{CJ(AB+BA)}\\=\tau(AB+BA)=2\tau(AB), \quad A,B\in \A.
\end{gathered}
\end{equation*}
Using this equality and the properties of Jordan *-isomorphisms, we easily conclude that \eqref{E:55} is satisfied: 
\begin{equation*}
\begin{gathered}
\omega \ler{CJ(A)(\log CJ(A)-\log CJ(B))}=
\omega \ler{CJ(A)(\log J(A)-\log J(B))}\\=
\omega \ler{CJ(A)(J(\log A)-J(\log B))}=
\tau \ler{A(\log A-\log B)}, \quad A,B\in \DtAi.
\end{gathered}
\end{equation*}
This completes the proof of the theorem.
\end{proof}

We next present the proof of our result concerning the Belavkin-Staszewski relative entropy. Here we again follow our general idea. To do that, we will need the following characterization of the order in terms of the Belavkin-Staszewski relative entropy. The next lemma is an apparent consequence of Lemma \ref{L:BS}. 

\begin{lemma} \label{L:CBS}
Let $\A$ be a $C^*$-algebra with a faithful trace $\tau$. For any $A,B\in \App$, we have
$A\leq B$ if and only if $S^\tau_{BS}(X||B)\leq S^\tau_{BS}(X||A)$ holds for all $X\in \App$. 
\end{lemma}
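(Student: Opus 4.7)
The plan is to reduce the claim directly to Lemma \ref{L:BS} via a substitution and the cyclic property of the trace. Recall that by the trace property and self-adjointness of $X^{1/2}\log(X^{1/2}B^{-1}X^{1/2})X^{1/2}$, one has
\[
S^\tau_{BS}(X\|B)=\tau\ler{X\log(X^{\fel}B^{-1}X^{\fel})}=\tau\ler{X^{\fel}\log(X^{\fel}B^{-1}X^{\fel})X^{\fel}}.
\]
Writing $Y=X^{\fel}$, this becomes $\tau\ler{Y f(Y B^{-1} Y)Y}$ with $f=\log$. As $X$ runs over $\App$, so does $Y$, so the condition $S^\tau_{BS}(X\|B)\leq S^\tau_{BS}(X\|A)$ for all $X\in \App$ is equivalent to
\[
\tau\ler{Y\log(YB^{-1}Y)Y}\leq \tau\ler{Y\log(YA^{-1}Y)Y}, \quad Y\in\App.
\]

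The function $f(t)=\log t$ is strictly increasing, continuous and operator monotone on $]0,\infty[$, and the associated function $g(t)=tf(t)=t\log t$ extends continuously to $[0,\infty[$ by setting $g(0)=0$, so $f$ satisfies the hypotheses of Lemma \ref{L:BS}. Applying that lemma with the positive invertible elements $B^{-\fel}$ and $A^{-\fel}$ (whose squares are $B^{-1}$ and $A^{-1}$), we obtain that the displayed inequality holds for all $Y\in\App$ if and only if $B^{-1}\leq A^{-1}$. Since inversion is order-reversing on $\App$, the latter is equivalent to $A\leq B$, which proves the lemma.

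No serious obstacle is expected: the only step requiring care is the bookkeeping of substitutions (recognizing $X^{1/2}B^{-1}X^{1/2}$ in the shape $YC^2Y$ with $C=B^{-\fel}$) and checking that $f=\log$ meets the smoothness hypothesis of Lemma \ref{L:BS}, both of which are routine. The same verification gives both directions simultaneously, so no separate argument for sufficiency and necessity is needed.
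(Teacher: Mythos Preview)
Your proof is correct and follows exactly the route the paper has in mind: the paper states that Lemma~\ref{L:CBS} ``is an apparent consequence of Lemma~\ref{L:BS}'' and gives no further details, and your argument supplies precisely those details by rewriting $S^\tau_{BS}(X\|B)=\tau\bigl(X^{1/2}\log(X^{1/2}B^{-1}X^{1/2})X^{1/2}\bigr)$ and applying Lemma~\ref{L:BS} with $f=\log$ to the pair $B^{-1/2},A^{-1/2}$.
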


Now, the proof of Theorem \ref{T:BSdens} is as follows.

\begin{proof}[Proof of Theorem \ref{T:BSdens}]
Let $\phi$ be a surjective map which respects the Belavkin-Staszewski relative entropy, i.e., satisfies \eqref{E:33} on $\DtAi$. Again, by \eqref{E:ext} we extend $\phi$ to a trace-preserving positive homogeneous surjective map $\phi: \App \to \Ampp$ denoted by the same symbol $\phi$. It is easy to verify that $\phi$ satisfies
\begin{equation*}
S^{\omega}_{BS}(\phi(A)||\phi(B))=S^\tau_{BS}(A||B), \quad A,B\in \App.
\end{equation*}
By Lemma \ref{L:CBS} we have that $\phi$ is an order isomorphism between $\App$ and $\Ampp$. Therefore, by (c1) in Theorem \ref{T:pot} and Corollary \ref{C:C}, we obtain that there are a central element $C\in \Ampp$ and a Jordan *-isomorphism $J:\A \to \Am$ such that $\phi(A)=CJ(A)$ holds for all $A\in \App$. This finishes the proof of the necessity part of the theorem. The sufficiency part requires only easy computation.
\end{proof} 

It has remained to verify Theorem \ref{T:UBS}.

\begin{proof}[Proof of Theorem \ref{T:UBS}]
In view of the previous arguments, here we only give the sketch of the proof.

Namely, applying the extension formula \eqref{E:ext}, we extend $\phi$ to a surjective map (denoted by the same symbol) $\phi: \App \to \Ampp$ which satisfies
\begin{equation*}\label{E:57}
\omega\ler{\phi(A)(\log \phi(A)-\log \phi(B))}=\tau\ler{A \log(A^{1/2}B^{-1}A^{1/2})}, \quad A,B\in \App.
\end{equation*}
By Lemmas \ref{L:U} and \ref{L:CBS}, we obtain that for any $A,B\in \App$ we have $A\leq B$ if and only $\log \phi(A)\leq \log \phi(B)$. Since, by its construction, $\phi$ is also positive homogeneous, by (c4) in Theorem \ref{T:pot} we conclude that $\A, \B$ are necessarily commutative. 
\end{proof}

\section{Acknowledgement}
The author is very grateful to the referee for his/her kind comments which helped to improve the presentation of the paper. 

\bibliographystyle{amsplain}

\end{document}